\renewcommand\labelenumi{(\roman{enumi})}
\renewcommand\theenumi\labelenumi
 \definecolor{darkgreen}{rgb}{0,.6,0}
\newcommand{\R}{{\mathbb{R}}}
\newcommand{\E}{{\mathbb{E}}}
\newcommand{\N}{{\mathbb{N}}}
\newcommand{\IT}{\mathbb{T}}
\newcommand{\cL}{{\mathcal{L}}} % for the sigma-algebra
\newcommand{\cH}{\mathcal{H}}
\newcommand{\cV}{{\mathcal{V}}}
\newcommand{\cI}{\mathcal{I}}
\newcommand{\cJ}{\mathcal{J}}
\newcommand{\cK}{\mathcal{K}}
\newcommand{\cD}{\mathcal{D}}
\newcommand{\cO}{\mathcal{O}}
\renewcommand{\P}{{\mathbf{P}}} % for the probability measure
\newcommand{\B}{{\mathbf{B}}}
\newcommand{\LB}{\cL}
\newcommand{\diff}[1]{\,\dd#1}
\newcommand{\dd}{\mathrm{d}}
\newcommand{\norm}[2][]{\|#2\|_{#1}}
\newcommand{\Cs}{\mathcal{C}_{\mathrm{s}}}
\newcommand{\cC}{\mathcal{C}}
\newcommand{\triple}{{\vert\kern-0.25ex\vert\kern-0.25ex\vert}}
\newcommand{\Id}{\mathrm{id}_H}
\newcommand{\bA}{{\mathbf{A}}}
\newcommand{\bB}{{\mathbf{B}}}
\newcommand{\bG}{{\mathbf{G}}}
\newcommand{\bL}{{\mathbf{L}}}
\newcommand{\bM}{{\mathbf{M}}}
\newcommand{\bR}{{\mathbf{R}}}
\newcommand{\bx}{{\mathbf{x}}}
\theoremstyle{definition}
\newtheorem{definition}{Definition}[section]
\newtheorem{assumption}[definition]{Assumption}
\newtheorem{example}[definition]{Example}
\theoremstyle{plain}
\newtheorem{theorem}[definition]{Theorem}
\newtheorem{lemma}[definition]{Lemma}
\newtheorem{corollary}[definition]{Corollary}
\newtheorem{prop}[definition]{Proposition}
\title[Approximation of Lyapunov equations]{Finite element approximation of Lyapunov equations related to parabolic stochastic PDEs}
\author[A.~Andersson]{Adam Andersson} \address[Adam Andersson]{\newline Department of Mathematical Sciences
	\newline Chalmers University of Technology \& University of Gothenburg
	\newline S--412 96 G\"oteborg, Sweden.
	\newline Saab AB Radar Solutions
        \newline S--412 76 G\"oteborg, Sweden} \email[]{adam.andersson@chalmers.se, adam.andersson2@saabgroup.com}	
\author[A.~Lang]{Annika Lang} \address[Annika Lang]{\newline Department of Mathematical Sciences
	\newline Chalmers University of Technology \& University of Gothenburg
	\newline S--412 96 G\"oteborg, Sweden.} \email[]{annika.lang@chalmers.se}
\author[A.~Petersson]{Andreas Petersson} \address[Andreas Petersson]{\newline Department of Mathematics
\newline University of Oslo
\newline Postboks 1053
\newline Blindern
\newline 0316 Oslo, Norway.} 
\email[]{andreep@math.uio.no}
\author[L. Schroer]{Leander Schroer} \address[Leander Schroer]{\newline Sopra Steria SE
\newline Sprengelstrasse 40
\newline 13353 Berlin, Germany} 
\begin{document}

\keywords{Lyapunov equations, finite element method, stochastic partial differential equations, stochastic heat equation, weak convergence, parabolic Anderson model, numerical approximation, multiplicative noise}

\subjclass[1991]{65M60, 60H15, 65J10, 65C30, 60H35, 49J20}
	
 \thanks{The authors wish to express many thanks to Stig Larsson for fruitful discussions and helpful comments. This work was supported in part by the Chalmers AI Research Centre (CHAIR), by the Knut and Alice Wallenberg foundation, by the Research Council of Norway (RCN) through project no.\ 274410, by the Swedish Research Council under Reg.~No.~621-2014-3995 and project no.\ 2020-04170 and by the Wallenberg AI, Autonomous Systems and Software Program (WASP) funded by the Knut and Alice Wallenberg Foundation. The authors have no relevant financial or non-financial interests to disclose.} 

\begin{abstract}
A numerical analysis for the fully discrete approximation of an operator Lyapunov equation related to linear SPDEs (stochastic partial differential equations) driven by multiplicative noise is considered. The discretization of the Lyapunov equation in space is given by finite elements and in time by a semiimplicit Euler scheme. The main result is the derivation of the rate of convergence in operator norm. Moreover, it is shown that the solution of the equation provides a representation of a quadratic and path dependent functional of the SPDE solution. This fact yields a deterministic numerical method to compute such functionals. As a secondary result, weak error rates are established for a fully discrete finite element approximation of the SPDE with respect to this functional. This is obtained as a consequence of the approximation analysis of the Lyapunov equation. It is the first weak convergence analysis for fully discrete finite element approximations of SPDEs driven by multiplicative noise that obtains double the strong rate of convergence, especially for path dependent functionals and smooth spatial noise. Numerical experiments illustrate the results empirically and it is demonstrated that the deterministic method has advantages over Monte Carlo sampling in a stability context.
\end{abstract}

\maketitle

\section{Introduction}

Lyapunov and Riccati equations have been studied for linear quadratic control and filtering of stochastic partial differential equations (SPDEs for short) since the late 1970s (see references below). Riccati equations are operator equations containing a nonlinear quadratic term. Their solutions provide optimal feedback controls for stochastic control problems and the covariance operators of the filtering distribution in optimal filtering with the Kalman--Bucy filter (see, e.g., \cite{falb1967} for a Hilbert-space-valued setting with bounded generators and trace-class noise). Removing the quadratic term, linear operator equations called Lyapunov equations are obtained. They are crucial for stability analysis and connected to quadratic functionals of SPDEs driven by multiplicative noise. 

In this work we establish a complete error analysis for numerical discretizations of Lyapunov equations by the semiimplicit Euler method in time and a finite element method in space. We connect these approximations to approximations of path dependent quadratic functionals of SPDEs. This connection allows us to show weak convergence rates of the SPDE approximation which are twice the strong rates. Our analysis can be used as a stepping stone for approximation results on Riccati equations in future work.

The two main equations that we connect are the linear parabolic SPDE
\begin{equation}\label{eq:SPDE}
\dd X(t) + AX(t) \, \dd t = B(X(t)) \, \dd W(t)
\end{equation}
with initial condition $X(0) = X_0$ in a Hilbert space~$H$ and
the operator valued Lyapunov equation, written in variational form,
\begin{equation}\label{eq:Lyapunov_var}
\frac{\dd}{\dd t}
\langle
L(t) \phi,\psi
\rangle
+
a(L(t)\phi,\psi)
+
a(L(t)\psi,\phi)
=
\langle
R\phi,R\psi
\rangle
+
\langle
L(t)B(\phi),B(\psi)
\rangle_{\LB_2^0}
\end{equation}
with $L(0) = G^\ast G$, where $\LB_2^0$ refers to a class of Hilbert--Schmidt operators.
We show in Section~\ref{sec:Lyapunov_equation_and_SPDE} that these are connected via
\begin{align}\label{eq:polarization_identity}
\langle L(T)x,x \rangle =\Phi(x)
\end{align} 
with respect to the quadratic functional
\begin{equation}\label{eq:Phi}
\Phi(x)
= \E \Big[ \int_0^T \|R X(t)\|^2  \, \dd t
+ \|G X(T)\|^2
\Big|
X(0)=x
\Big].
\end{equation}
Here $a(\cdot,\cdot)$ denotes the bilinear form corresponding to the operator~$-A$ that generates an analytic semigroup and $W$ is a cylindrical Wiener process. This includes the classical case of a $Q$-Wiener process with trace-class covariance. For the complete details on the setting, the reader is referred to Section~\ref{sec:setting}.

We study Lyapunov equations in a new generality suitable for numerical analysis with an emphasis on regularity. It was surprising to us that the literature does not cover the setting of cylindrical noise (cf.\ \cite{daprato1984b,Flandoli1982, Flandoli1986, Flandoli1990a,Tessitore2005, hutang2018,Ichikawa1979,Tessitore1992b}). We therefore develop the solution theory for the Lyapunov equation and prove existence and uniqueness by the Banach fixed point theorem and the Gronwall lemma. 

To show~\eqref{eq:polarization_identity}, we use tools from numerical analysis. We approximate both equations \eqref{eq:Lyapunov_var} and~\eqref{eq:SPDE} on a finite-dimensional subspace~$V_h$ such as a finite element space and show that \eqref{eq:polarization_identity} holds in the semidiscrete setting. Convergence establishes equality in the limit and gives as a byproduct convergence to the Lyapunov equation and weak convergence of the SPDE approximation.

For the fully discrete approximation of the Lyapunov equation~\eqref{eq:Lyapunov_var}, we discretize the above semidiscrete approximation by a semiimplicit Euler method in time. Results on numerical methods for Lyapunov and Riccati equations for stochastic problems are rare. The results of this paper are most closely related to those of~\cite{mena2017}, which only considers one-dimensional noise in an abstract approximation framework for Riccati equations. Connected to our problem are also~\cite{kuehnkurschner2020}, in which a time-independent Lyapunov equation related to an approximation of~\eqref{eq:SPDE} is employed as part of a bigger problem, and~\cite{bennerstillfjordtrautwein2021}, which assumes convergence of an approximation of a Riccati equation to derive strong convergence of a finite element approximation of a controlled version of~\eqref{eq:SPDE}. To the best of our knowledge, this work is the first to provide rigorous a priori convergence rates for a fully discrete numerical approximation of the Lyapunov equation~\eqref{eq:Lyapunov_var} in the infinite-dimensional noise setting and the first to connect such approximations to weak convergence for the related SPDE~\eqref{eq:SPDE}.

Weak convergence of numerical approximations of SPDEs with additive noise is a well understood topic, see, e.g., for implicit Euler in time~\cite{WangGan} and for finite element and spectral Galerkin methods in space \cite{AnderssonLarsson2016,Brehier3,cuihong2018}. For multiplicative noise the literature is still restricted to special cases. Weak rates of convergence have been obtained for discretization in time with implicit \cite{BrehierDebussche2017,debussche2011} and exponential \cite{Kurniawan2016} Euler schemes and in space with a spectral Galerkin method~\cite{conus2014}. For the finite element method, proofs are restricted to the spatially semidiscrete setting with (essentially) linear multiplicative space-time white noise~\cite{AnderssonLarsson2016}. The fully discrete setting and more regular noise are still open. One reason for this is the appearance of an extra term~\cite{AnderssonLarsson2016} which is not present in the spectral Galerkin method~\cite{conus2014}.

Based on the convergence analysis of the fully discrete approximation of the Lyapunov equation and the connection~\eqref{eq:polarization_identity}, we are able to extend the existing weak convergence analysis for finite element approximations. In the fully discrete setting of \eqref{eq:Lyapunov_var} and~\eqref{eq:SPDE}, we establish \eqref{eq:polarization_identity} up to a small error and are thus the first to show convergence for path dependent quadratic functionals with multiplicative white or colored noise in the finite element setting. The rate is twice that of strong convergence and coincides with that for additive noise.

Our numerical schemes for~\eqref{eq:SPDE} and~\eqref{eq:Lyapunov_var} and their convergence open up for two methods to approximate~\eqref{eq:Phi}: either deterministically for all initial conditions~$x$ with~\eqref{eq:Lyapunov_var} or combining~\eqref{eq:SPDE} with a Monte Carlo method. Depending on the application one or the other method might be more suitable. If the problem at hand is the computation of~\eqref{eq:Phi} with respect to all initial conditions in parallel, our Lyapunov method is preferable. This method also has an advantage
\begin{enumerate}
\item
if the operator $R$ is non-local, since then multiplication with a dense matrix needs to be repeated for each time step and each sample in a Monte Carlo simulation. In a Lyapunov method, a similar dense matrix operation only needs to be repeated once for each time step.
\item \label{enum:stability}
under multiplicative noise of large magnitude, since this causes stability problems~\cite{thalhammer2017}. More precisely, the zero solution $X=0$ can be asymptotically stable in the almost sure sense but asymptotically mean square unstable, simultaneously. In this setting, the Monte Carlo method fails to approximate $\Phi$ while our deterministic Lyapunov method faces no problem. We demonstrate this phenomenon in an example in Section~\ref{sec:numerics}.
\end{enumerate}

The manuscript is organized as follows. In Section~\ref{sec:setting} the abstract setting and notation of the paper are introduced along with assumptions on the family of approximation spaces~$(V_h)_{h\in(0,1]}$. Existence and uniqueness of a mild solution to~\eqref{eq:Lyapunov_var} and its spatial and temporal regularity are established in Section~\ref{sec:Lyapunov_equation_and_SPDE}. Furthermore, \eqref{eq:polarization_identity} is shown via an analogous equality in the semidiscrete setting, i.e., \eqref{eq:SPDE} and~\eqref{eq:Lyapunov_var} are solved on~$V_h$. Section~\ref{sec:fully_discrete_Lyapunov} and Section~\ref{sec:fully_discrete_spde} are devoted to convergence analyses of fully discrete semiimplicit approximation schemes for~\eqref{eq:Lyapunov_var} and~\eqref{eq:SPDE}, respectively. 
In Section~\ref{sec:numerics}, numerical experiments conclude the manuscript that illustrate the theoretical results and compare the deterministic approach via~\eqref{eq:Lyapunov_var} with a Monte Carlo simulation of~\eqref{eq:SPDE} with respect to the stability issues named in~\ref{enum:stability} above. For completeness we include proofs based on standard arguments in the appendix.

\section{Notation and abstract setting}\label{sec:setting}

We start by introducing the necessary notation. For separable Hilbert spaces $(U, \langle \cdot,\cdot \rangle_U)$ and $(V, \langle \cdot,\cdot \rangle_V)$ with corresponding norms, we denote by $\LB(U,V)$ the Banach space of all bounded linear operators $U\to V$ equipped with the operator norm, where we abbreviate $\LB(U)=\LB(U,U)$. The space $\Sigma(U)\subset \LB(U)$ is the closed subspace of all self-adjoint operators and $\Sigma^+(U)\subset \Sigma(U)$ is the restriction to all operators that are additionally non-negative definite. By $\LB_2(U,V)\subset\LB(U,V)$ we denote the space of Hilbert--Schmidt operators $U\to V$. This is a Hilbert space with norm and inner product given by
\begin{equation*}
  \|T\|_{\LB_2(U,V)}^2
  =
    \sum_{i \in \N}
      \|Te_i\|_V^2
  ,\qquad
  \langle
    T,S
  \rangle_{\LB_2(U,V)}
  =
  \sum_{i \in \N}
  \langle
    T e_i,S e_i
  \rangle,
\end{equation*}
where  $(e_i)_{i=1}^\infty$ is an orthonormal basis of~$U$. The definition is independent of the choice of basis. For an interval $I\subset \R$, we denote by $\cC(I,\LB(U))$ and $\Cs(I,\LB(U))$ the spaces of continuous and strongly continuous functions from $I$ to $\LB(U)$, respectively. 

The beta function $\B \colon (0,\infty)\times(0,\infty)\to\R$ is given by $\B(x,y) = \int_0^1t^{x-1}(1-t)^{y-1}\diff t$. By a change of variable the following very useful identity is obtained: For all $t_1 \leq t_2$, $x,y\in (0,\infty)$,
\begin{equation}\label{eq:beta_integral}
  \int_{t_1}^{t_2}
    (s-t_1)^{x-1}(t_2-s)^{y-1}
  \diff s
  =
  \B(x,y)\, |t_2-t_1|^{x+y-1}.
\end{equation}

We next introduce the setting that we consider throughout the article. Here $U$ and $H$ are fixed separable Hilbert spaces and by $\langle\cdot,\cdot\rangle$ and $\|\cdot\|$ we denote the inner product of $H$ and its induced norm, respectively.
\begin{assumption}\label{ass:SPDE}
	Equations~\eqref{eq:SPDE} and~\eqref{eq:Phi} satisfy the following conditions:
	\begin{enumerate}
		\item \label{ass:SPDE_A} The linear operator $A: \cD(A) \subset H \rightarrow H$ is densely defined, self-adjoint and positive definite with compact inverse.
		\item \label{ass:SPDE_W} The process $W = (W(t))_{t \in \IT}$ is an adapted cylindrical $I_U$-Wiener process on a filtered probability space $(\Omega,\mathcal F, (\mathcal F_t)_{t\in \IT},\P)$.
		\item \label{ass:SPDE_B_beta} For a fixed regularity parameter $\beta \in (0,1]$, the linear operator $B$ satisfies $\|A^{(\beta-1)/2}B\|_{\LB(H,\LB_2(U,H))} < \infty$.
		\item \label{ass:SPDE_R_G} The linear operators $R$ and $G$ satisfy $\|R\|_{\LB(H)}< \infty$ and $\|G\|_{\LB(H)}< \infty$.
	\end{enumerate}
\end{assumption}

Fractional powers $(A^{r/2})_{r\in\R}$ of~$A$, such as $A^{(\beta-1)/2}$ in the assumption above, are well-defined and enable us to define the spaces $(\dot H^r)_{r\in\R}$, which are used to measure spatial regularity. More specifically, for $r\geq0$
\begin{equation*}
\dot{H}^r
= \{ \phi \in H, \|\phi\|_{\dot H^r} =\|A^{\frac{r}{2}}\phi \| < \infty \}
\end{equation*}
and for $r<0$ the space $\dot H^{r}$ is the closure of $H$ under the $\|A^{r/2}\cdot\|$-norm and  $\dot H^r = (\dot H^{-r})'$, the dual space of $\dot H^{-r}$ with respect to $\langle \cdot,\cdot \rangle$. In that way we obtain a family $(\dot H^r)_{r\in\R}$ of separable Hilbert spaces with the property that $\dot{H}^r \subset \dot{H}^s$ whenever $r \ge s \in \R$, where the embedding is dense and continuous. Moreover, by \cite[Lemma~2.1]{BKK20}, for every $s \in \R$,  $A^{r/2}$ can be uniquely extended to an operator in $\LB(\dot{H}^s,\dot{H}^{s-r})$. We make no notational distinction between $A^{r/2}$ and its extension and define the corresponding bilinear form $a: \dot H^1 \times \dot H^1 \rightarrow \R$ for $\phi, \psi \in \dot H^1$ by
\begin{equation}\label{eq:aA}
a(\phi, \psi) = \langle A^{\frac{1}{2}} \phi, A^{\frac{1}{2}} \psi \rangle.
\end{equation}

The operator $-A$ is the generator of an analytic semigroup $S=(S(t))_{t \ge 0}$ of bounded linear operators on~$H$ that extends to $\dot{H}^r$, $r<0$.  As for $A$, we do not differentiate between the semigroup~$S$ and its extension. 
The analyticity of the semigroup implies the existence of constants $(C_\theta)_{\theta\geq0}$ such that for all $\theta\in[0,\infty)$
\begin{equation}\label{eq:S_smooth}
  \sup_{t>0}
  t^{\frac\theta2}
  \big\|
    A^{\frac\theta2}S(t)
  \big\|_{\LB(H)}
  \leq
  C_\theta
\end{equation}
and for all $\theta\in[0,2]$
\begin{equation}\label{eq:S_Holder1}
\sup_{t>0}
t^{-\frac\theta2}
  \big\|
    A^{-\frac\theta2} 
    \big(
      S(t) - \Id
    \big)
  \big\|_{\LB(H)}
  \leq 
  C_\theta.
\end{equation}
These regularity estimates play an essential role in our proofs. We refer to \cite[Appendix~B]{kruse2013} for a detailed introduction to this setting.

Assumption~\ref{ass:SPDE}\ref{ass:SPDE_W} on~$W$ includes white noise in~$H$ (by letting $U=H$) as well as $H$-valued trace-class $Q$-Wiener processes (by letting $U=Q^{1/2}(H)$, cf.\ \cite[Theorem~7.13]{Peszat07}). We introduce the notation $\LB_2^0 = \LB_2(U,H)$ and set $\IT=[0,T]$ for $T>0$. Note that for predictable stochastic processes $\Psi\in L^2(\IT\times\Omega;\LB_2^0)$ the stochastic integral $\int_0^T \Psi(t)\diff W(t)\in L^2(\Omega;H)$ is well-defined.

We are now in place to introduce the setting for the SPDE~\eqref{eq:SPDE}. By \cite[Theorem 2.9]{AJK21} \eqref{eq:SPDE} admits an up to modification unique mild solution, i.e., a predictable process $X\colon\IT \times\Omega\to H$ that satisfies for all $t\in\IT$, $\P$-a.s.
\begin{equation}
\label{eq:SPDE_mild}
X(t) = S(t) X_0 + \int_0^t S(t-s) B(X(s)) \, \dd W(s)
\end{equation}
and
\begin{equation}\label{eq:X_moment}
\sup_{t\in\IT}\|X(t)\|_{L^2(\Omega;H)}
\lesssim \|X_0\|<\infty,
\end{equation} 
where we denote $a \lesssim b$ if there exists a generic constant $C$ such that $a \le C b$ and the size of the constant is of minor relevance.

Next, we introduce spatial approximation spaces. Let $(V_h)_{h \in (0,1]}$ be a family of finite-dimensional subspaces of $\dot{H}^1$, where $h$ denotes the refinement parameter. We equip $V_h$ with the same inner product as $H$ so that for an operator $T \in \LB(V_h)$,
\begin{equation*}
\big\| T \big\|_{\LB(V_h)} = \big\| T P_h \big\|_{\LB(H)}.
\end{equation*} 
Here $P_h: \dot{H}^{-1} \to V_h$ is the generalized orthogonal projector (see, e.g., \cite[Section~3.2]{kruse2013}) which coincides with the standard orthogonal projector when restricted to $H$. Let $A_h: V_h \to V_h$ be the unique operator defined for $\phi_h,\psi_h \in V_h$ by
\begin{equation*}
\langle
A_h\phi_h,\psi_h
\rangle
=
a(\phi_h,\psi_h).
\end{equation*}
This implies that $A_h$ is self-adjoint and positive definite on $V_h$. Therefore, $-A_h$ generates an analytic semigroup $S_h: [0,\infty) \to \LB(V_h)$ on $V_h$ and fractional powers of $A_h$ are defined in the same way as for~$A$. For brevity we write $A^{\theta/2}_h$ for $A^{\theta/2}_h P_h$ and $S_h(t)$ for $S_h(t) P_h$, $\theta \in \R$, $t \in [0,T]$. By \cite[(3.12)]{kruse2013} and \cite[Lemma~B.9(ii)]{kruse2013} there exist constants $(D_\theta)_{\theta\geq0}$ so that for all $\theta\geq0$
\begin{equation}
\label{eq:Analytic}
\sup_{h\in(0,1], t > 0}
t^{\frac\theta2}
\|A_h^{\frac\theta2}S_h(t)\|_{\LB(H)}
\leq
D_\theta
\end{equation}
and for all $\theta\in[0,2]$ that
\begin{equation}
\label{eq:Analytic2}
\sup_{h\in(0,1], t > 0}
t^{-\frac\theta2} \|A_h^{-\frac\theta2}(S_h(t)-P_h)\|_{\LB(H)}
\leq
D_\theta.
\end{equation}
Here and below we use the notation $D_\theta$ for a constant depending on the choice of discretization but not the specific value of $h \in (0,1]$. The optimal value may differ from line to line.

To guarantee that $(V_h)_{h\in (0,1]}$ has appropriate approximation properties and includes finite element approximations, we make the following assumptions. 

\begin{assumption}\label{ass:Ah}
There exist constants $(D_\theta)_{\theta\geq-1}$ such that
\begin{enumerate}
	\item \label{eq:Ritz_assumption} 
	for $\theta \in \{1,2\}$, $h \in (0,1]$:  $\big\|
	(A_h^{-1} P_h A -\Id) A^{-\theta/2}
	\big\|_{\LB(H)}
	\leq D_\theta h^\theta$, 
	\item \label{eq:Ph}
	for $\theta\in[0,2]$, $h \in (0,1]$: $\| A^{-\theta/2}(P_h-\Id)\|_{\LB(H)} \, \leq D_\theta h^{\theta}$,
	\item \label{eq:inv_ineq}
	for $\theta \in [0,2]$, $h \in (0,1]$: $\big\|A_h^{\theta/2} \big\|_{\LB(H)}
	\leq
	D_\theta h^{-\theta}$ and 
	\item \label{eq:AAh}
	for $\theta \in [-1,1]$, $\phi \in \dot H^{\theta}$: $\sup_{h\in(0,1]}\big\|
	A_h^{\theta/2} \phi
	\big\|
	\leq
	D_\theta
	\big\|
	A^{\theta/2}\phi
	\big\|$ . 
\end{enumerate}
\end{assumption}

\begin{example}
	\label{ex:heat_equation}
	Assumption~\ref{ass:Ah} holds in the following finite element setting. Let $H=L^2(D)$ for some bounded, convex polygonal domain $D \subset \R^d$, $d \in \{1,2,3\}$ and $A = - \Delta$ denote the Laplace operator with zero Dirichlet boundary conditions. Let $(\mathcal{T}_h)_{h\in(0,1]}$ be a regular family of triangulations of $D$ and let $V_h$ be the space of all continuous functions that are piecewise polynomials of some fixed degree on~$\mathcal{T}_h$. Then \ref{eq:Ritz_assumption} and \ref{eq:Ph} hold true, see, e.g., \cite[Chapters 1-3]{thomee2006}. If we assume in addition to this that the family $(\mathcal{T}_h)_{h\in(0,1]}$ is quasi-uniform, then we also have \ref{eq:inv_ineq} and \ref{eq:AAh}, see, e.g., \cite[(3.28)]{thomee2006} and \cite{crouzeix1987}. 
\end{example}

A consequence of~\ref{eq:AAh} and the definition of $A_h$ (see~\cite[page~1341]{AnderssonLarsson2016}) is the existence of constants $(D_\theta)_{\theta\geq-1}$ such that for all $\theta \in [-1,1]$
 	\begin{equation}\label{eq:AhA}
	\sup_{h\in(0,1)}
	\big\|
	A^{\frac{\theta}2}A_h^{^{-\frac{\theta}2}}
	\big\|_{\LB(H)}
	\le
	D_\theta \sup_{h\in(0,1)} \big\|
	A_h^{\frac{\theta}2}A_h^{^{-\frac{\theta}2}}
	\big\|_{\LB(H)}	=D_\theta.
	\end{equation}
Using also \ref{eq:Ritz_assumption}  and \ref{eq:inv_ineq} one can show (cf.\ the proof of \cite[Theorem~4.4]{larsson2011}) the existence of constants $(D_\theta)_{\theta\geq-1}$ such that for all $\theta \in [-1,2]$
\begin{equation}\label{eq:AhA2}
\sup_{h\in(0,1)}
\big\|
A_h^{\frac{\theta}2}
A^{-\frac{\theta}2}
\big\|_{\LB(H)}
\leq
D_\theta
\big\|
A^{\frac{\theta}2}A^{^{-\frac{\theta}2}}
\big\|_{\LB(H)}
=
D_\theta.
\end{equation}

Let $E_h: (0,1] \to \LB(H)$ denote the \emph{error operator} $E_h = S - S_h$. As another consequence of~\ref{eq:Ritz_assumption} we obtain that there exist constants $(D_\theta)_{\theta\geq0}$ such that for all $h \in (0,1]$, $\mu \in [0,2)$ and $\theta \in [0,1]$ with $\mu + \theta < 2$,
\begin{equation}
\label{eq:Eh}
\sup_{t>0} t^{\frac{\mu+\theta}2}\|E_h(t)A^{\frac{\theta}2}\|_{\LB(H)} = \sup_{t>0} t^{\frac{\mu+\theta}2} \|A^{\frac{\theta}2}E_h(t)\|_{\LB(H)}
\leq
D_\theta
h^\mu.
\end{equation}
This is proven analogously to \cite[Lemma~5.1]{AnderssonKruseLarsson}, replacing the use of \cite[Lemma~3.12]{kruse2013} with \cite[Lemma~3.8]{kruse2013}, using also \eqref{eq:AhA2} and the fact that $E_h(t)$ is self-adjoint for all $t \in \IT$. In the next sections, we frequently use this bound with $\mu = 2 \rho$.

We next introduce the setting for the full discretization in space and time. Recall that $\IT=[0,T]$ and set $\IT_0=(0,T]$ for $T>0$. For $\tau\in(0,1]$, let $(t_n)_{n\in \N_0}\subset\R$ be the uniform discretization of $\IT$ given by $t_n=\tau n$ and $N_\tau = \inf\{n\in\N : t_{n+1} \notin \IT\}$. Let us denote by $S_{h,\tau}$ the implicit Euler approximation of the semigroup at time $\tau$, i.e., $S_{h,\tau}=(P_h +\tau A_h)^{-1}$. The discrete family $(S_{h,\tau}^n)_{n\in\{0,\dots,N_\tau\}}$ of powers of $S_{h,\tau}$ acts as a fully discrete approximation of the semigroup~$S$. We again write, for brevity, $S_{h,\tau}^n$ for $S_{h,\tau}^n P_h$. 

Let us now collect three properties of the discrete approximation $S_{h,\tau}$ of the semigroup and the error operator $E_{h,\tau}^n = S_{h,\tau}^n-S_h({t_n})$. There exist constants $(D_\theta)_{\theta\geq0}$ such that for all $\theta\in [0,2]$ and $\tau \in (0,1]$
\begin{equation}
\label{eq:S_h_bound}
\sup_{h\in(0,1], n\in\{1,\dots,N_\tau\}}
t_n^{\frac{\theta}2} \|A_h^{\frac{\theta}2} S_{h,\tau}^n\|_{\LB(H)}\leq D_\theta
\end{equation}
for all $\theta\in [0,1]$, $\rho \in [0,2]$ and $\tau \in (0,1]$
\begin{equation}
\label{eq:E_h_bound}
\sup_{h\in(0,1], n\in\{1,\dots,N_\tau\}}
t_n^{\frac{\rho+\theta}2} \|A_h^{\frac{\theta}2}E_{h,\tau}^n\|_{\LB(H)}
\leq
D_\theta \tau^{\rho/2}
\end{equation}
and for all $\theta\in [0,1]$ and $\tau \in (0,1]$
\begin{equation}
\label{eq:S_h_Holder}
\sup_{h \in(0,1]}
\|A_h^{-\theta}(S_{h,\tau}-P_h)\|_{\LB(H)}
\leq
D_\theta 
\tau^{\theta}.
\end{equation}
For a proof of \eqref{eq:S_h_bound}, see, e.g., \cite[Lemma~7.3]{thomee2006}. We show \eqref{eq:E_h_bound} in Proposition~\ref{prop:AE} and the well-known result \eqref{eq:S_h_Holder} can be shown in a similar way, see, e.g., \cite[Lemma~B.9]{kruse2013}.

We use the abbreviations $b=\|A^{(\beta-1)/{2}}B\|_{\LB(H,\LB_2^0)} = \|B\|_{\LB(H,\LB_2(U,\dot H^{\beta-1}))}$, $r=\|R\|_{\LB(H)}$ and $g=\|G\|_{\LB(H)}$.

\section{Theory of the Lyapunov equation and the SPDE}
\label{sec:Lyapunov_equation_and_SPDE}

The goal of this section is threefold. We start with existence, uniqueness and regularity of the solution to the Lyapunov equation~\eqref{eq:Lyapunov_var} in Section~\ref{subsec:exLyapunovSol}. Second, we present in Section~\ref{subsec:semidiscreteApprox} an error analysis for semidiscrete space approximations of the Lyapunov equation~\eqref{eq:Lyapunov_var} and the SPDE~\eqref{eq:SPDE}. This is used in Section~\ref{subsec:LyapunovSolvesQoI} to show~\eqref{eq:polarization_identity}. As an immediate consequence we obtain weak convergence rates for the semidiscrete SPDE approximation to~\eqref{eq:SPDE}

\subsection{Existence, uniqueness and regularity}\label{subsec:exLyapunovSol}

While the variational form~\eqref{eq:Lyapunov_var} of the Lyapunov equation is natural for numerics, it is more natural to work in the semigroup framework for the regularity analysis. The mild form of the Lyapunov equation reads: Find $L\colon\IT\to\LB(H)$ such that for all $t\in\IT$ and $\phi\in H$
\begin{equation}\label{eq:Lyapunov_mild}
  L(t)\phi
=
  S(t)
  G^*G
  S(t)\phi
  +
  \int_0^t
    S(t-s)
    \big(
      R^*R
      +
      B^*L(s)B
    \big)
    S(t-s)\phi \,
  \diff s.
\end{equation}
We note that the mapping
$
  [0,t]\ni s
  \mapsto
  S(t-s)
  \big(
    R^*R
    +
    B^*L(s)B
  \big)
  S(t-s)
  \in \LB(H)
$ is not necessarily Bochner integrable due to the semigroup being only strongly measurable, which requires $\phi$ to be inside the integral. 

With some abuse of notation, we write $B^*$ for the operator in $\cL(\LB_2(U,\dot{H}^{1-\beta}),H)$ that for all $K \in \LB_2(U,\dot{H}^{1-\beta})$ and $v \in H$ satisfies
\begin{equation*}
	\langle B^* K, v \rangle = \langle K, B v \rangle_{\LB_2^0} = \langle A^{\frac{1-\beta}{2}} K, A^{\frac{\beta-1}{2}} B v \rangle_{\LB_2^0}.
\end{equation*}
It satisfies $\|B^*A^{(\beta-1)/2}\|_{\LB(\LB_2^0,H)} = \|A^{(\beta-1)/2} B \|_{\LB(H,\LB_2^0)} = b$.

Let $\cV$ be the space of all operator-valued functions $\Upsilon\colon\IT\to\LB(H)$ satisfying
\begin{equation*}
  \Upsilon\in \Cs(\IT,\LB(H)) 
  \cap 
  \cC(\IT_0,\LB(\dot H^{\beta-1},\dot H^{1-\beta}))
\end{equation*} 
for $\beta\in(0,1]$ as fixed in Assumption~\ref{ass:SPDE}\ref{ass:SPDE_B_beta} and
\begin{equation*}
  \sup_{t\in\IT}
  \big\|
    \Upsilon(t) 
  \big\|_{\LB(H)}
 +
  \sup_{t\in\IT_0}
  t^{1-\beta}
  \big\|
    A^{\frac{1-\beta}2}\Upsilon(t) A^{\frac{1-\beta}2}
  \big\|_{\LB(H)}
  <\infty.
\end{equation*}
On this space we introduce the family $(\triple\cdot\triple_\sigma)_{\sigma\in \R}$ of equivalent norms given by
\begin{equation*}
  \triple
    \Upsilon
  \triple_\sigma
  =
  \sup_{t\in\IT}
    e^{-\sigma t}
    \big\|
      \Upsilon(t)
    \big\|_{\LB(H)}
    +
  \sup_{t\in\IT_0}
    t^{1-\beta}
    e^{-\sigma t}
    \big\|
      A^{\frac{1-\beta}2}
      \Upsilon(t)
      A^{\frac{1-\beta}2}
    \big\|_{\LB(H)}.
\end{equation*}
The space $(\cV, \triple\cdot\triple_\sigma)$ is a Banach space since the norm is the sum of two proper Banach norms.

An operator-valued function $L\in\cV$ is called a \emph{mild solution} to~\eqref{eq:Lyapunov_var} if it satisfies \eqref{eq:Lyapunov_mild} for all $t\in\IT$ and $\phi\in H$. Existence, uniqueness and regularity of a mild solution to~\eqref{eq:Lyapunov_mild} are stated in Theorem~\ref{thm:Lyapunov} below and the equivalence of solutions to~\eqref{eq:Lyapunov_var} and~\eqref{eq:Lyapunov_mild} in Theorem~\ref{thm:mild_is_weak}. Surprisingly, the results seem to be new in our context. Since the proofs are based on standard techniques such as the Banach fixed point theorem and the Gronwall lemma, we omit them here but include them for completeness in Appendix~\ref{sec:eu} and~\ref{sec:reg}. 

\begin{theorem}\label{thm:Lyapunov}
There exists a unique mild solution $L\in\cV$ to \eqref{eq:Lyapunov_mild} that satisfies $L(\IT)\subset \Sigma^+(H)$. Moreover, the solution satisfies the following regularity estimates:
\begin{enumerate}
 \item \label{eq:spat_reg} For all $\theta_1,\theta_2\in[0,2)$ with 
$\theta_1+\theta_2<2$,  
$
  L(\IT_0)\subset\LB(\dot H^{-\theta_2}, \dot H^{\theta_1})
$ and there exists a constant $C>0$ such that for all $t \in \IT_0$
\begin{equation*}
  \|L(t)\|_{\LB(\dot H^{-\theta_2}, \dot H^{\theta_1})}
  =
  \big\|
    A^{\frac{\theta_1}2}
    L(t)
    A^{\frac{\theta_2}2}
  \big\|_{\LB(H)}
  \leq
  C
  t^{-\frac{\theta_1+\theta_2}2}.
\end{equation*}
\item \label{eq:temp_reg} For all $\theta_1,\theta_2\in[0,2)$, $\xi\in[0,1)$ with $\theta_1+\theta_2+2\xi<2$, there exists a constant $C>0$ such that for all $0 < t_1 \le t_2$
\begin{align*}
 \|L(t_2) - L(t_1)\|_{\LB(\dot H^{-\theta_2}, \dot H^{\theta_1})}
  & =
  \big\|
    A^{\frac{\theta_1}2}
    (L(t_2)-L(t_1))
    A^{\frac{\theta_2}2}
  \big\|_{\LB(H)}\\
  & \leq 
  C t_1^{-\frac{\theta_1+\theta_2+2\xi}2}|t_2-t_1|^{\xi}.
\end{align*}
\end{enumerate}
\end{theorem}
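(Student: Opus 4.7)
My plan is to obtain existence, uniqueness and non-negativity via Banach's fixed-point theorem on $(\cV,\triple\cdot\triple_\sigma)$ for sufficiently large $\sigma$, and then to derive the regularity estimates \ref{eq:spat_reg}--\ref{eq:temp_reg} by substituting the fixed point back into the mild equation.

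First I define $\Lambda\colon\cV\to\cV$ by letting $\Lambda(\Upsilon)(t)$ equal the right-hand side of~\eqref{eq:Lyapunov_mild} with $\Upsilon$ in place of $L$. The key algebraic fact is the factorisation $B^*\Upsilon(s)B=(B^*A^{(\beta-1)/2})(A^{(1-\beta)/2}\Upsilon(s)A^{(1-\beta)/2})(A^{(\beta-1)/2}B)$, which yields $\|B^*\Upsilon(s)B\|_{\LB(H)}\le b^2\|A^{(1-\beta)/2}\Upsilon(s)A^{(1-\beta)/2}\|_{\LB(H)}$: the singular weight of $\Upsilon$ controlled by the second component of $\triple\cdot\triple_\sigma$ is exactly what is needed. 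To show $\Lambda(\cV)\subset\cV$, I would bound the $\LB(H)$-part using $\|S(t)\|_{\LB(H)}\le 1$, and the $\LB(\dot H^{\beta-1},\dot H^{1-\beta})$-part by inserting $A^{(1-\beta)/2}$ into each semigroup factor via~\eqref{eq:S_smooth}; the convolution $\int_0^t(t-s)^{\beta-1}s^{\beta-1}\,ds=\B(\beta,\beta)t^{2\beta-1}$ from~\eqref{eq:beta_integral} combines with $t^{1-\beta}$ into a bounded $t^\beta$. Strong continuity at $t=0$ is immediate from the $\LB(H)$-bound; continuity on $\IT_0$ follows from norm continuity of $t\mapsto S(t)$ on $(0,\infty)$. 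By linearity, contractivity reduces to estimating $\sup_{t\in\IT_0} t^{1-\beta}e^{-\sigma t}\int_0^t(t-s)^{\beta-1}s^{\beta-1}e^{\sigma s}\,ds$ and its simpler $\LB(H)$-analogue. I would prove that both are $O(\sigma^{-\beta})$ as $\sigma\to\infty$ by splitting the convolution at the natural time scale $\sigma^{-1}$: on the early part $\int_0^\infty u^{\beta-1}e^{-\sigma u}\,du=\Gamma(\beta)\sigma^{-\beta}$ can be invoked, while on the late part the exponential decay $e^{-\sigma t/2}$ together with $\sup_t t^\gamma e^{-\sigma t/2}=O(\sigma^{-\gamma})$ closes the estimate. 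Banach's theorem then yields a unique fixed point $L\in\cV$.

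For $L(\IT)\subset\Sigma^+(H)$ I run the Picard iteration from $\Upsilon_0\equiv 0$ and observe that $\Sigma^+(H)$-valued elements of $\cV$ form a $\triple\cdot\triple_\sigma$-closed, $\Lambda$-invariant subset, since $\langle S(t-s)B^*\Upsilon(s)BS(t-s)\phi,\phi\rangle=\langle\Upsilon(s)B(S(t-s)\phi),B(S(t-s)\phi)\rangle_{\LB_2^0}\ge 0$ when $\Upsilon(s)\in\Sigma^+(H)$. For~\ref{eq:spat_reg}, I substitute $L$ back into~\eqref{eq:Lyapunov_mild}, multiply by $A^{\theta_1/2}$ on the left and $A^{\theta_2/2}$ on the right, and apply~\eqref{eq:S_smooth} together with the $s^{\beta-1}$ bound on $A^{(1-\beta)/2}L(s)A^{(1-\beta)/2}$ already provided by $L\in\cV$; the integral $\int_0^t(t-s)^{-(\theta_1+\theta_2)/2}s^{\beta-1}\,ds=\B(\beta,1-(\theta_1+\theta_2)/2)t^{\beta-(\theta_1+\theta_2)/2}$ converges because $\theta_1+\theta_2<2$, and the prefactor $t^\beta$ is absorbed into the constant using $t\le T$. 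For~\ref{eq:temp_reg}, I decompose $L(t_2)-L(t_1)$ into the initial-value difference and the two integral pieces $\int_0^{t_1}[S(t_2-s)(\cdots)S(t_2-s)-S(t_1-s)(\cdots)S(t_1-s)]\,ds$ and $\int_{t_1}^{t_2}S(t_2-s)(\cdots)S(t_2-s)\,ds$. Writing $S(t_2)=S(t_2-t_1)S(t_1)$ and inserting $A^{-\xi}A^\xi$, the factor $A^{-\xi}(S(t_2-t_1)-\Id)$ contributes $(t_2-t_1)^\xi$ by~\eqref{eq:S_Holder1} while the extra $A^\xi$ costs a factor $t_1^{-\xi}$ when absorbed by the remaining semigroup via~\eqref{eq:S_smooth}; the exponent count then proceeds as in~\ref{eq:spat_reg} under the relaxed condition $\theta_1+\theta_2+2\xi<2$.

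The main obstacle is the contraction step: the prefactor $t^{1-\beta}$ in the second component of $\triple\cdot\triple_\sigma$ competes with the singular $s^{\beta-1}$ weight coming from $\Upsilon$, and one must split the weighted convolution at the correct time scale (of order $\sigma^{-1}$) to obtain uniform decay in $\sigma$. The bookkeeping in~\ref{eq:temp_reg} is a secondary difficulty, since each of the four pieces of the decomposition requires a slightly different routing of the $A^{\pm\xi}$ factors, and one must always arrange for the blow-up factor to land on $t_1$ rather than $t_2$.
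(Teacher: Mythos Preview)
Your approach is essentially the same as the paper's: a Banach fixed-point argument on $(\cV,\triple\cdot\triple_\sigma)$ with the same factorisation $B^*\Upsilon B=(B^*A^{(\beta-1)/2})(A^{(1-\beta)/2}\Upsilon A^{(1-\beta)/2})(A^{(\beta-1)/2}B)$, the same beta-integral convolutions for the mapping property, and the same insertion of $A^{\pm\xi}$ via~\eqref{eq:S_smooth}--\eqref{eq:S_Holder1} for the H\"older estimate~\ref{eq:temp_reg}. Your explicit splitting of the contraction integral at the $\sigma^{-1}$ scale is a concrete version of what the paper obtains by citing \cite[Theorem~2.9]{Andersson2015}.

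The one genuine difference is the proof of $L(\IT)\subset\Sigma^+(H)$. You obtain it directly by running the Picard iteration from $\Upsilon_0\equiv 0$ inside the closed $\Lambda$-invariant cone of $\Sigma^+(H)$-valued elements of $\cV$. The paper instead proves only self-adjointness within the fixed-point argument (via a second application of Banach's theorem on the closed subspace of $\Sigma(H)$-valued elements) and \emph{defers} positivity to the probabilistic representation $\langle L(t)x,x\rangle=\Phi(x,t)\ge 0$ established later in Theorem~\ref{thm:Phi}. Your route is more elementary and self-contained, avoiding the forward reference; the only detail you should make explicit is that positivity of $\langle\Upsilon(s)\psi,\psi\rangle$ extends from $\psi\in H$ to $\psi\in\dot H^{\beta-1}$ by density, using $\Upsilon(s)\in\LB(\dot H^{\beta-1},\dot H^{1-\beta})$, since the vectors $B(S(t-s)\phi)e_m$ appearing in your identity lie a priori only in $\dot H^{\beta-1}$.
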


\begin{theorem}\label{thm:mild_is_weak}
Let $L\in\cV$ satisfy $L_0=G^*G$. Then, $L$ satisfies \eqref{eq:Lyapunov_mild} if and only if it satisfies the variational form~\eqref{eq:Lyapunov_var} of the Lyapunov equation for all test functions $\phi,\psi\in \dot H^2$
and in that case \eqref{eq:Lyapunov_var} is valid for all $\phi,\psi\in\dot H^\varepsilon$, $\varepsilon>0$.
\end{theorem}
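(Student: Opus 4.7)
The plan is to establish both implications first for test functions in $\dot H^2$, and then extend the variational form to $\dot H^\varepsilon$. A key preliminary is that any $L\in\cV$ satisfying either formulation with $L_0 = G^*G$ must be self-adjoint. For a mild solution this is immediate from the uniqueness statement in Theorem~\ref{thm:Lyapunov}. For a weak solution, I would subtract \eqref{eq:Lyapunov_var} applied to $(\phi,\psi)$ from the same equation applied to $(\psi,\phi)$: the terms containing $a$ and $\langle R\cdot,R\cdot\rangle$ cancel by symmetry, leaving for $M := L - L^*$
\begin{equation*}
\frac{\dd}{\dd t}\langle M(t)\phi,\psi\rangle = \langle B^*M(t)B\phi,\psi\rangle
\end{equation*}
for all $\phi,\psi\in\dot H^2$. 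Integrating with $M(0) = 0$ yields a linear Volterra equation for $M$, and a Gronwall argument analogous to the contraction estimate for $\cK$ in the proof of Theorem~\ref{thm:Lyapunov} forces $M\equiv 0$.

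For mild $\Rightarrow$ weak, I would fix $\phi,\psi\in\dot H^2$ and differentiate each of $\langle\cI(t)\phi,\psi\rangle$, $\langle\cJ(t)\phi,\psi\rangle$, $\langle\cK(L)(t)\phi,\psi\rangle$ in $t$, using strong differentiability of $t\mapsto S(t)\phi$ with derivative $-AS(t)\phi$ and Leibniz' rule for Bochner integrals (justified by the integrability bounds \eqref{eq:I}, \eqref{eq:J0.1}, \eqref{eq:K0.1}). Each derivative takes the form $[\text{boundary term}] - \langle\cdot(t)\phi,A\psi\rangle - \langle\cdot(t)A\phi,\psi\rangle$; self-adjointness allows rewriting the last pairing as $-\langle\cdot(t)\psi,A\phi\rangle$, and since $\phi,\psi\in\dot H^2$ both terms then combine to $-a(\cdot(t)\phi,\psi)-a(\cdot(t)\psi,\phi)$. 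The boundary contributions sum to $\langle R\phi,R\psi\rangle + \langle L(t)B\phi,B\psi\rangle_{\LB_2^0}$, producing \eqref{eq:Lyapunov_var}. Conversely, for weak $\Rightarrow$ mild, fix $t > 0$ and $\phi,\psi\in\dot H^2$ and consider $g(s) := \langle L(s)S(t-s)\phi, S(t-s)\psi\rangle$. The product rule with $\partial_s S(t-s) = AS(t-s)$, combined with the variational form evaluated at $S(t-s)\phi, S(t-s)\psi \in \dot H^2$, yields a sum in which the $A$-cross-terms cancel by self-adjointness of $L(s)$, leaving
\begin{equation*}
g'(s) = \langle RS(t-s)\phi, RS(t-s)\psi\rangle + \langle L(s)BS(t-s)\phi, BS(t-s)\psi\rangle_{\LB_2^0}.
\end{equation*}
Integration from $0$ to $t$ with $L(0) = G^*G$ recovers \eqref{eq:Lyapunov_mild} tested against $\psi$, and density of $\dot H^2$ in $H$ yields the mild form for all $\phi \in H$.

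Once the mild form is established, the variational form on $\dot H^\varepsilon$ for any $\varepsilon > 0$ follows by differentiating the mild form as in the mild $\Rightarrow$ weak step; the spatial regularity $L(t)\in\LB(\dot H^{-\theta_2},\dot H^{\theta_1})$ for $\theta_1+\theta_2<2$ from Theorem~\ref{thm:Lyapunov} ensures every pairing is well-defined. I expect the main obstacle to be the self-adjointness step, since without it the $A$-cross-terms in the weak $\Rightarrow$ mild computation fail to cancel, and $\cK(L)$ is not self-adjoint in the mild $\Rightarrow$ weak computation. The remaining Leibniz- and Fubini-type justifications for Bochner integrals are routine given the decay bounds from the proof of Theorem~\ref{thm:Lyapunov}.
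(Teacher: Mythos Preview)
Your approach is essentially the same as the paper's for the weak $\Rightarrow$ mild direction (both differentiate $s\mapsto \langle L(s)S(t-s)\phi,S(t-s)\psi\rangle$ and integrate) and for the extension to $\dot H^\varepsilon$ via Theorem~\ref{thm:Lyapunov}\ref{eq:spat_reg}. Two points of difference are worth recording.

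For mild $\Rightarrow$ weak, the paper avoids Leibniz under the integral by first deriving the flow identity
\[
L(t+h)\phi = S(h)L(t)S(h)\phi + \int_t^{t+h} S(t+h-r)\big(R^*R+B^*L(r)B\big)S(t+h-r)\phi\,\mathrm{d}r
\]
directly from the mild form, and then taking a difference quotient $h^{-1}[\langle L(t+h)\phi,\psi\rangle - \langle L(t)\phi,\psi\rangle]$, passing to the limit via strong differentiability of $S$ on $\dot H^2$ and the Lebesgue differentiation theorem. Your termwise differentiation of $\cI,\cJ,\cK(L)$ works as well but needs more dominated-convergence bookkeeping.

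Your explicit self-adjointness step is well placed: both computations (yours and the paper's) need $\langle L(t)A\phi,\psi\rangle = \langle L(t)\psi,A\phi\rangle$ to match the variational term $a(L(t)\psi,\phi)$ with the cross-term from the product rule, and that is exactly $L(t)=L(t)^*$. The paper relies on Theorem~\ref{thm:Lyapunov} for the mild solution but does not justify it in the weak $\Rightarrow$ mild direction; your Volterra--Gronwall argument for $M=L-L^*$ fills that gap. One technical caveat: to run that Gronwall in the $\triple\cdot\triple_\sigma$-norm you need $L^*\in\cV$, and strong continuity of $t\mapsto L(t)^*$ does not follow from that of $L$. The uniform continuity on $\IT_0$ in $\LB(\dot H^{\beta-1},\dot H^{1-\beta})$ \emph{does} pass to the adjoint, and that is the part of the norm the contraction/Gronwall estimate actually uses, so the argument goes through.
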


\subsection{Semidiscrete approximations in space}\label{subsec:semidiscreteApprox}

Let us consider semidiscrete approximations of the Lyapunov equation~\eqref{eq:Lyapunov_var} and the SPDE~\eqref{eq:SPDE} in this subsection. For this purpose we use the approximation spaces $(V_h)_{h\in(0,1]}$ introduced in Section~\ref{sec:setting} with related operators. Let $\cV_h$ be the space 
$\cV_h = \cC(\IT,\LB(V_h))$ endowed with the norm

\begin{equation*}
  \sup_{t\in\IT}
  \big\|
    \Upsilon_h(t) 
  \big\|_{\LB(V_h)}
 +
  \sup_{t\in\IT_0}
  t^{1-\beta}
  \big\|
    A_h^{\frac{1-\beta}2}\Upsilon_h(t) A_h^{\frac{1-\beta}2}
  \big\|_{\LB(V_h)}.
\end{equation*}
The \emph{semidiscrete Lyapunov equation} reads in variational form:
Given $L_h(0) = P_h G^\ast G P_h$, find $L_h\in \cV_h$  such that for all $\phi_h,\psi_h \in V_h$
\begin{align}\label{eq:Lyapunov_semidiscrete_var}
\begin{split}
& \frac{\dd}{\dd t}
  \langle
    L_h(t) \phi_h,\psi_h
  \rangle
  +
  a(L_h(t)\phi_h,\psi_h)
  +
  a(L_h(t)\psi_h,\phi_h)
  \\
& \qquad
  =
  \langle
    R\phi_h,R\psi_h
  \rangle
  +
  \langle
    L_h(t)P_h B\phi_h,B\psi_h
  \rangle_{\LB_2^0}.
\end{split}
\end{align}
The mild formulation related to~\eqref{eq:Lyapunov_semidiscrete_var} is given for all $t \in \IT$ and $\phi_h \in V_h$ by
	\begin{align}
	\label{eq:Lyapunov_semidiscrete_mild}
	\begin{split}
	L_{h}(t) \phi 
	&=
	S_{h}(t)G^*G S_h(t) \phi_h\\
	&\quad
	+
	\int_0^t
	S_{h}(t-s)
	\big(
	R^*R 
	+
	B^*L_h(s) P_h B
	\big)
	S_h(t-s) \phi_h
	\diff s.
	\end{split}
	\end{align}
Existence and uniqueness of a solution to both equations follow from Theorem~\ref{thm:Lyapunov} and Theorem~\ref{thm:mild_is_weak} applied to~$\cV_h$. In the next proposition, we show that the regularity bounds in Section~\ref{subsec:exLyapunovSol} are uniform in~$h$ and convergence of the approximation~\eqref{eq:Lyapunov_semidiscrete_mild}.
	
\begin{prop}\label{prop:spat}
	Let $(L_h)_{h\in(0,1]} \subset \cV_h$ be the family of unique mild solutions to~\eqref{eq:Lyapunov_semidiscrete_mild}. 
 \begin{enumerate}
  \item \label{thm:spat:bound}For all $\theta_1,\theta_2\in[0,2)$ with $\theta_1+\theta_2<2$,
	there exists a constant $C>0$ such that for all $h\in(0,1)$
	\begin{equation*}
		\|A_h^{\frac{\theta_1}2}L_{h}(t)A_h^{\frac{\theta_2}2}\|_{\LB(H)}
		\leq
		Ct^{-\frac{\theta_1+\theta_2}2}.
		\end{equation*}
  \item \label{thm:spat:temp_reg} For all $\theta_1,\theta_2\in [0,2)$, $\xi\in[0,1)$ with $\theta_1+\theta_2+2\xi<2$, there exists a constant $C>0$ such that for all $h\in(0,1)$ and $0<t_1 \le t_2$
	\begin{equation*}
	\big\|
	A_h^{\frac{\theta_1}2}
	(L_h(t_2)-L_h(t_1))
	A_h^{\frac{\theta_2}2}
	\big\|_{\LB(H)}
	\leq 
	C t_1^{-\frac{\theta_1+\theta_2 + 2\xi}2}|t_2-t_1|^{\xi}.
	\end{equation*}
  \item \label{thm:spat:conv} For all $\theta_1,\theta_2\in[0,1]$, $\rho\in(0,\beta)$ with $\theta_1+\theta_2 + 2\rho<2$, there exists a constant $C>0$ such that for $h \in (0,1]$, $t\in\IT_0$
	\begin{align*}
        \big\|
            L_{h}(t)P_h-L(t)
        \big\|_{\LB(\dot H^{-\theta_2},\dot H^{\theta_1})}
        & =
		\big\|
		A^{\frac{\theta_1}2}
		\big(
		L_{h}(t)P_h-L(t) 
		\big)
		A^{\frac{\theta_2}2}
		\big\|_{\LB(H)}\\
&		 \leq
		C
		t^{-\frac{\theta_1+\theta_2 + 2\rho}2}
		h^{2\rho}.
		\end{align*}
 \end{enumerate}
\end{prop}

\begin{proof}
For every $h\in(0,1]$, Theorem~\ref{thm:Lyapunov} guarantees the existence of a constant $C=C_h>0$ such that \ref{thm:spat:bound} and \ref{thm:spat:temp_reg} hold. 

Uniformity in $h$ follows from the uniformity in~\eqref{eq:Analytic} and~\eqref{eq:Analytic2}. More precisely, every constant $C_\theta$ in the proof of Theorem~\ref{thm:Lyapunov} can be replaced by a corresponding constant $D_\theta$ in the semidiscrete setting. The only place where some extra care is needed is the estimate corresponding to~\eqref{eq:K0.1}. For this we observe that for arbitrary $K \in \LB^0_2$
\begin{align*}
\big\|B^* A_h^{\frac{\beta-1}{2}} K \big\|_{H} &= \sup_{\phi \in H, \| \phi\| = 1}  \big| \big\langle B^* A_h^{\frac{\beta-1}{2}} K, \phi \big\rangle\big| 
= \sup_{\phi \in H, \| \phi\| = 1}  \big| \big\langle K, A_h^{\frac{\beta-1}{2}} B \phi \big\rangle\big|_{\LB_2^0} \\ 
&\le \big\| A_h^{\frac{\beta-1}{2}} B \big\|_{\cL(H,\LB_2^0)} \|K\|_{\LB_2^0}
\end{align*} 
and similarly that $\| A_h^{(\beta-1)/{2}} B \|_{\cL(H,\LB_2^0)} \le \| B^* A_h^{(\beta-1)/{2}} \|_{\cL(\LB_2^0,H)}$. Therefore, we obtain $\| A_h^{(\beta-1)/{2}} B \|_{\cL(H,\LB_2^0)} = \| B^* A_h^{(\beta-1)/{2}} \|_{\cL(\LB_2^0,H)}$ and for any $Y \in \LB(V_h)$ by~\eqref{eq:AhA}
	\begin{align}
	\label{eq:B_Y_bound}
	\begin{split}
	\big\| B^* Y P_h B \big\|_{\LB(H)} &= \big\| B^* A_h^{\frac{\beta-	1}{2}}  A_h^{\frac{1-\beta}{2}} Y A_h^{\frac{1-\beta}{2}} A_h^{\frac{\beta-	1}{2}} B  \big\|_{\LB(H)} \\ 
	&\le \big\| B^* A_h^{\frac{\beta-1}{2}}  \big\|_{\LB(\LB_2^0,H)}^2 \big\| A_h^{\frac{1-\beta}{2}} Y A_h^{\frac{1-\beta}{2}}  \big\|_{\LB(H)} \\
	&\leq \big\|  B^* A^{\frac{\beta-	1}{2}} \big\|_{\LB(\LB_2^0,H)}^2 \big\| A^{\frac{1-\beta}{2}} A_h^{\frac{\beta-1}{2}} 
	\big\|_{\LB(H)}^2 \big\| A_h^{\frac{1-\beta}{2}} Y A_h^{\frac{1-\beta}{2}} \big\|_{\LB(H)} \\
	& \le  b^2 D_{1-\beta}^2 \big\| A_h^{\frac{1-\beta}{2}} Y A_h^{\frac{1-\beta}{2}} \big\|_{\LB(H)}.
	\end{split}
	\end{align}
This implies the uniform bound corresponding to~\eqref{eq:K0.1}.

Having shown the first two claims of the proof, we are ready to prove~\ref{thm:spat:conv}. First we rewrite $L_hP_h - L$ using \eqref{eq:Lyapunov_mild} and \eqref{eq:Lyapunov_semidiscrete_mild} to obtain for $\phi \in H$
\begin{align*}
	& L_{h}(t)P_h\phi-L(t)\phi \\
	& \qquad =  S_h(t) G^* G S_h (t)\phi - S(t) G^* G S(t)\phi \\
	&\quad\qquad+  \int^t_0 \big( S_h(t-s) R^* R S_h (t-s) - S(t-s) R^* R S(t-s) \big)\phi\diff s  \\
	&\quad\qquad+ \int^t_0 \big( S_h(t-s) B^* L_h (s) P_h B S_h (t-s) 
	- S(t-s) B^* L(s) B S(t-s) \big)\phi\diff s,
\end{align*}
which yields
\begin{align*}
	&\big\| A^{\frac{\theta_1}2}	\big( L_{h}(t)P_h-L(t) \big) A^{\frac{\theta_2}2} \big\|_{\LB(H)}\\  
	&\qquad\le \big\| A^{\frac{\theta_1}2} \big(S_h(t) G^* G S_h (t) - S(t) G^* G S(t) \big) A^{\frac{\theta_2}2} \big\|_{\LB(H)} \\
	&\qquad\quad+  \int^t_0 \big\|  A^{\frac{\theta_1}2}\big( S_h(t-s) R^* R S_h (t-s) - S(t-s) R^* R S(t-s) \big)A^{\frac{\theta_2}2} \big\|_{\LB(H)}\diff s  \\
	&\qquad\quad+ \int^t_0 \big\|   A^{\frac{\theta_1}2}\big( S_h(t-s) B^* L_h (s) P_h B S_h (t-s)\\
	&\hspace{25mm} - S(t-s) B^* L(s) B S(t-s) \big)A^{\frac{\theta_2}2}  \big\|_{\LB(H)} \diff s\\
	&\qquad=: I + J + K.
	\end{align*}
	We treat the three error terms separately. Using~\eqref{eq:S_smooth}, \eqref{eq:Analytic} and \eqref{eq:Eh} the term $I$ can be bounded by
	\begin{align*}
	I &\le \big\| A^{\frac{\theta_1}2} \big( S_h(t) G^* G E_h (t) \big) A^{\frac{\theta_2}2} \big\|_{\LB(H)} + \big\| A^{\frac{\theta_1}2} \big( E_h(t) G^* G S (t) \big) A^{\frac{\theta_2}2} \big\|_{\LB(H)} \\
	&\le g^2 D_{\theta_1} t^{-\frac{\theta_1}2}  \big\| E_h (t) A^{\frac{\theta_2}2} \big\|_{\LB(H)} + g^2 C_{\theta_2} t^{-\frac{\theta_2}2}  \big\| A^{\frac{\theta_1}2} E_h (t) \big\|_{\LB(H)} \\
	&\le g^2 D_{\theta_1} \big( D_{\theta_2}  + C_{\theta_2}  \big) h^{2 \rho } t^{ -\frac{\theta_1 + \theta_2 + 2\rho}2}
	\end{align*}
	and similarly the second term satisfies
	\begin{equation*}
	J \le D_{\theta_1} (D_{\theta_2} + C_{\theta_2}) r^2 h^{2 \rho} 
	\int^t_0 (t-s)^{- \frac{\theta_ 1 +\theta_2+2\rho}2} \diff s \lesssim h^{2 \rho} t^{1 - \frac{\theta_ 1 +\theta_2+2\rho}2}.
	\end{equation*}
	Adding, subtracting and applying the triangle inequality, we split $K$ into
	\begin{align*}
	K &\le \int^t_0 \big\| A^{\frac{\theta_1}2} \big( S_h(t-s) B^* L_h(s) P_h B E_h (t-s) \big) A^{\frac{\theta_2}2} \big\|_{\LB(H)} \diff s \\ 
	&\quad+ \int^t_0 \big\| A^{\frac{\theta_1}2} \big( E_h(t-s) P_h B^* L_h(s) P_h B S (t-s) \big) A^{\frac{\theta_2}2} \big\|_{\LB(H)} \diff s \\ 
	&\quad+ \int^t_0 \big\| A^{\frac{\theta_1}2} S(t-s) \big(  P_h B^* L_h(s) P_h B - B^* L(s) B \big) S(t-s) A^{\frac{\theta_2}2} \big\|_{\LB(H)} \diff s. 
	\end{align*}
By Assumption~\ref{ass:Ah}, its consequences and~\eqref{eq:B_Y_bound}, we bound the three terms by
	\begin{align*}
	 K
	 & \lesssim
	  (2 + D_L)h^{2 \rho} \, t^{\beta -\frac{\theta_1 + \theta_2+2\rho}2}\\
	  	& \qquad +  \int^t_0 (t-s)^{-\frac{\theta_1+\theta_2}2} \big\| A^{\frac{1-\beta}{2}} ( L_h(s)P_h - L(s))A^{\frac{1-\beta}{2}} \big\|_{\LB(H)} \diff s,
	\end{align*} 
	where we set
\begin{equation}\label{eq:frakL_0}
	D_L	= 	\sup_{s\in\IT_0, h \in (0,1]}s^{1-\beta}
	\|A_h^{(1-\beta)/2} L_h(s) A_h^{(1-\beta)/2} \|_{\LB(H)}.
\end{equation}

Collecting all estimates we obtain
	\begin{align}\label{eq:bootstrap1}
	\begin{split}
	&\big\| A^{\frac{\theta_1}2} ( L_h(t)P_h - L(t))A^{\frac{\theta_2}2} \big\|_{\LB(H)}\\
	&\,\lesssim t^{-\frac{\theta_1+\theta_2+2\rho}2} h^{2\rho} +  
	\int^t_0 (t-s)^{-\frac{\theta_1+\theta_2}2} \big\| A^{\frac{1-\beta}{2}} ( L_h(s)P_h - L(s))A^{\frac{1-\beta}{2}} \big\|_{\LB(H)} \diff s,	
	\end{split}
	\end{align}
where we bound all terms in~$t$ by the strongest singularity from~$I$.
Choosing $\theta_1 = \theta_2 = 1 - \beta$ and $\rho < \beta$ ensures that the exponent is bigger than $-1$, so that Gronwall's lemma (see, e.g., \cite{henry1981}) yields
	\begin{equation}\label{eq:bootstrap2}
	\big\| A^{\frac{1-\beta}{2}} ( L_h(t)P_h - L(t))A^{\frac{1-\beta}{2}} \big\|_{\LB(H)} \lesssim t^{\beta -\rho - 1} h^{2\rho}.
	\end{equation}
The general claim follows by a bootstrap argument using \eqref{eq:bootstrap2} in~\eqref{eq:bootstrap1} and~\eqref{eq:beta_integral}, which completes the proof.
\end{proof}

Having analyzed the convergence of the semidiscrete Lyapunov equation, let us continue with the semidiscrete SPDE.
Let $(X_h)_{h\in(0,1)}\subset \cC(\IT;L^2(\Omega;V_h))$ be the family of mild solutions on the finite-dimensional spaces~$V_h$ satisfying
\begin{equation}
\label{eq:moment}
\sup_{h\in(0,1]}
\sup_{t\in\IT}
\|X_h(t)\|_{L^2(\Omega;H)}
\lesssim \|X_0\|.
\end{equation}
and for all $h\in(0,1]$, $t \in \IT$, $\P$-a.s.
\begin{equation}\label{eq:SPDE_spatial}
  X_{h}(t)
  =
  S_h(t)X_0
  +
  \int_0^t
    S_h(t-s)BX_h(s)
  \diff W(s).
\end{equation}
Existence follows from \cite[Theorem~2.9(ii)]{AJK21}, where uniformity of \eqref{eq:moment} in~$h$ is deduced from~\eqref{eq:Analytic}. The proof of strong convergence is standard, cf., e.g., \cite[Theorem~3.10]{kruse2013}. Therefore we state the following proposition without proof.

\begin{prop}\label{prop:strong}
  Let $X$ be the mild solution to~\eqref{eq:SPDE_mild} and $(X_h)_{h\in(0,1)}$ be the family of unique mild solutions to~\eqref{eq:SPDE_spatial}. For all $\rho\in(0,\beta)$, there exists a constant $C>0$ such that for all $h\in(0,1]$ and $t\in \IT_0$
 \begin{equation*}
   \big\|
     X(t)-X_h(t)
   \big\|_{L^2(\Omega;H)}
   \leq
   C
   t^{-\frac\rho 2}
   h^{\rho}
   \|X_0\|.
 \end{equation*}
\end{prop}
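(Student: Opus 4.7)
My plan is to decompose $X(t)-X_h(t)$ via the mild formulations~\eqref{eq:SPDE_mild} and~\eqref{eq:SPDE_spatial} into three terms, bound each using the error-operator estimate~\eqref{eq:Eh} and the $B^*$-factorization~\eqref{eq:B_Y_bound} already established in the proof of Proposition~\ref{thm:spat}, and close the resulting Volterra inequality with a singular Gronwall lemma of Henry type. Subtracting~\eqref{eq:SPDE_spatial} from~\eqref{eq:SPDE_mild} and adding/subtracting the semidiscrete drift gives the splitting
\begin{align*}
X(t) - X_h(t) &= E_h(t) X_0 + \int_0^t E_h(t-s) B X(s) \diff W(s) \\
&\quad + \int_0^t S_h(t-s) B (X(s)-X_h(s)) \diff W(s) =: I_1(t) + I_2(t) + I_3(t).
\end{align*}

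The term $I_1$ is deterministic and the bound $\|I_1(t)\| \leq D_0 h^\rho t^{-\rho/2} \|X_0\|$ is immediate from~\eqref{eq:Eh} with $\mu=\rho$, $\theta=0$. For $I_2$ I would apply It\^o's isometry, factor $BX(s) = A^{(1-\beta)/2} \cdot A^{(\beta-1)/2} BX(s)$, invoke~\eqref{eq:Eh} with $\mu=\rho$, $\theta=1-\beta$, and use $\|A^{(\beta-1)/2}B\|_{\LB(H,\LB_2^0)}=b$ together with the uniform moment bound~\eqref{eq:X_moment}; the resulting integral $\int_0^t (t-s)^{\beta-\rho-1}\diff s$ is finite precisely because $\rho<\beta$, yielding $\|I_2(t)\|_{L^2(\Omega;H)}^2 \lesssim h^{2\rho} t^{\beta-\rho}\|X_0\|^2$, which is dominated by $h^{2\rho} t^{-\rho}\|X_0\|^2$ up to the factor $T^\beta$.

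For $I_3$, another application of It\^o's isometry combined with the factorization $S_h B = (S_h A_h^{(1-\beta)/2})(A_h^{(\beta-1)/2} B)$, the smoothing bound~\eqref{eq:Analytic} for $S_h$, and~\eqref{eq:B_Y_bound} produces $\|I_3(t)\|_{L^2(\Omega;H)}^2 \lesssim \int_0^t (t-s)^{\beta-1} \|X(s)-X_h(s)\|_{L^2(\Omega;H)}^2 \diff s$. Setting $e(t) := \|X(t)-X_h(t)\|_{L^2(\Omega;H)}^2$, the three estimates combine into the Volterra-type inequality
\begin{equation*}
e(t) \lesssim h^{2\rho} t^{-\rho}\|X_0\|^2 + \int_0^t (t-s)^{\beta-1} e(s)\diff s,
\end{equation*}
to which I would apply the singular Gronwall lemma of~\cite{henry1981} (as at the end of the proof of Proposition~\ref{thm:spat}). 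Taking square roots then gives the claimed estimate.

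The main obstacle I anticipate is coordinating the irreducible $t^{-\rho/2}$ singularity at zero, inherited from only assuming $X_0\in H$ rather than $X_0\in \dot H^\rho$, with the weakly singular kernel in the Volterra inequality for $I_3$. The restriction $\rho<\beta$ is exactly what is needed to keep both the contribution from $I_2$ and the convolution of $s^{-\rho}$ against $(s-r)^{\beta-1}$ integrable, so that the singular Gronwall step preserves rather than amplifies the rate; a short bootstrap, exactly analogous to~\eqref{eq:bootstrap1}--\eqref{eq:bootstrap2}, could be used in its place if desired.
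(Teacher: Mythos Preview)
Your proposal is correct and follows essentially the same route as the paper: the identical three-term decomposition via $E_h$ and $S_h$, It\^o isometry combined with~\eqref{eq:Eh} (parameters $\mu=\rho$, $\theta=0$ and $\theta=1-\beta$) and~\eqref{eq:Analytic} for the individual pieces, and a singular Gronwall step from~\cite{henry1981} to close. The only cosmetic difference is that the paper cites~\eqref{eq:AhA2} directly for the bound $\|A_h^{(\beta-1)/2}B\|_{\LB(H,\LB_2^0)}\le D_{\beta-1}b$ in $I_3$, whereas you point to~\eqref{eq:B_Y_bound}; since the latter is itself derived from the same norm-equivalence, this is the same argument.
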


\subsection{Connection between the Lyapunov equation and the SPDE}\label{subsec:LyapunovSolvesQoI}
 
We are now in place to prove~\eqref{eq:polarization_identity} in Theorem~\ref{thm:Phi} below. As a first step we show the equality in the semidiscrete setting of Section~\ref{subsec:semidiscreteApprox}. We therefore define for $h \in (0,1]$, $x \in V_h$ and $t \in \IT$
\begin{equation*}
	\Phi_h(x,t): = \E
	\Big[
	\int_0^t
	\|RX_h(s)\|^2
	\diff s
	+
	\|GX_h(t)\|^2
	\,\big|\,
	X_0=x
	\Big]
\end{equation*}
used in the following lemma.
 
\begin{lemma}\label{thm:PhiSemidiscr}
Let $(L_h)_{h\in (0,1)}$ be the family of unique mild solutions to~\eqref{eq:Lyapunov_semidiscrete_mild}. For all $h\in(0,1)$, $t\in\IT$, $x\in V_h$
\begin{equation*}
  \langle
    L_h(t) x,x
  \rangle
  = \Phi_h(x,t).
\end{equation*}
\end{lemma}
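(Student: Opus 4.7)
The plan is to apply It\^o's formula to the time-dependent functional $v\colon[0,t]\times V_h\to\R$ defined by $v(s,y)=\langle L_h(t-s)y,y\rangle$ evaluated along the semidiscrete process $X_h$, and then to use the variational equation~\eqref{eq:Lyapunov_semidiscrete_var} to make the drift terms collapse to $-\|RX_h(s)\|^2$.

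First I would reformulate~\eqref{eq:SPDE_spatial} in differential (It\^o) form as
\begin{equation*}
	\dd X_h(s) + A_h X_h(s)\,\dd s = P_h B X_h(s)\,\dd W(s),\qquad X_h(0)=x\in V_h,
\end{equation*}
which is the standard equivalence between mild and strong form on the finite-dimensional subspace $V_h$, where $-A_h$ is bounded and everything is smooth. Next, since $L_h(r)$ is self-adjoint on $V_h$ for each $r$, the Fr\'echet derivatives of $v$ are $D_y v(s,y)\cdot z = 2\langle L_h(t-s)y,z\rangle$ and $D^2_y v(s,y)[z_1,z_2]=2\langle L_h(t-s)z_1,z_2\rangle$, and the time derivative is $\partial_s v(s,y)=-\langle \dot L_h(t-s)y,y\rangle$. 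Theorem~\ref{thm:Lyapunov} (applied in $V_h$ via Theorem~\ref{thm:mild_is_weak}) guarantees $L_h\in\cC^1((0,t],\LB(V_h))$ in the finite-dimensional setting, so the standard It\^o formula in finite dimensions applies on any subinterval $[\varepsilon,t]$ and can be extended to $[0,t]$ by letting $\varepsilon\to 0$ using continuity of $L_h$ at $0$ and dominated convergence.

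Applying It\^o's formula on $[0,t]$ then yields
\begin{align*}
	v(t,X_h(t)) - v(0,X_h(0))
	&= \int_0^t \Bigl[ -\langle \dot L_h(t-s)X_h(s),X_h(s)\rangle\\
	&\quad - 2\langle L_h(t-s)X_h(s),A_h X_h(s)\rangle\\
	&\quad + \langle L_h(t-s) P_h B X_h(s), B X_h(s)\rangle_{\LB_2^0} \Bigr]\dd s + M_t,
\end{align*}
where $M_t$ is the stochastic integral arising from the $\dd W$-term. I would then substitute $\phi_h=\psi_h=X_h(s)$ (pathwise) in the semidiscrete variational equation~\eqref{eq:Lyapunov_semidiscrete_var}, noting that $a(L_h(t-s)X_h(s),X_h(s))=\langle L_h(t-s)X_h(s),A_h X_h(s)\rangle$ by the definition of $A_h$ and self-adjointness. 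This yields
\begin{equation*}
	\langle \dot L_h(t-s)X_h(s),X_h(s)\rangle = -2\langle L_h(t-s)X_h(s),A_h X_h(s)\rangle + \|RX_h(s)\|^2 + \langle L_h(t-s)P_h B X_h(s),B X_h(s)\rangle_{\LB_2^0},
\end{equation*}
and plugging this into the above integrand makes the $A_h$- and Hilbert--Schmidt-terms cancel, leaving just $-\|RX_h(s)\|^2$ in the drift.

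Finally I would take expectations. The martingale $M$ has zero expectation: indeed, $L_h(t-\cdot)$ is bounded on $V_h$ uniformly in $s$, $B$ maps $H\to\LB_2^0$ boundedly, and~\eqref{eq:moment} gives $X_h\in L^2(\Omega;V_h)$, so the stochastic integrand lies in $L^2(\IT\times\Omega;\LB_2^0)$. Using $L_h(0)=P_h G^*G P_h$ and $X_h(t)\in V_h$ (so $P_h X_h(t)=X_h(t)$), we get $\langle L_h(0)X_h(t),X_h(t)\rangle = \|GX_h(t)\|^2$. Rearranging yields
\begin{equation*}
	\langle L_h(t)x,x\rangle = \E\Big[\|GX_h(t)\|^2 + \int_0^t \|RX_h(s)\|^2\,\dd s \,\Big|\, X_h(0)=x\Big] = \Phi_h(x,t),
\end{equation*}
which is the claim. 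The main technical point is the justification of the It\^o expansion up to $s=0$, but since the problem is posed on the finite-dimensional subspace $V_h$ this is routine; alternatively one can perform the computation on $[\varepsilon,t]$, pass $\varepsilon\downarrow 0$, and appeal to $\Cs(\IT,\LB(V_h))$-continuity of $L_h$ together with $L^2$-continuity of $X_h$.
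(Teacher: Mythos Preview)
Your proposal is correct and follows essentially the same route as the paper's proof: both apply It\^o's formula to $(s,y)\mapsto\langle L_h(t-s)y,y\rangle$ along the semidiscrete strong solution $X_h$, use the variational form~\eqref{eq:Lyapunov_semidiscrete_var} to collapse the drift to $-\|RX_h(s)\|^2$, and then take expectations. The only cosmetic difference is that the paper writes $v_h(t,x)=\langle L_h(t)x,x\rangle$ and applies It\^o to $v_h(t-s,\cdot)$, while you build the time-reversal into the definition of $v$; the computations are identical.
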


\begin{proof}
Fix $t\in\IT_0$ and let $v_h\colon \IT\times V_h\to \R$ satisfy for $x\in V_h$ that
\begin{equation*}
v_h(t,x)=\langle L_h(t)x,x \rangle.
\end{equation*}
In a first step, we observe that by~\eqref{eq:Lyapunov_semidiscrete_mild} and the definition of~$v_h$
\begin{equation*}
v_h(0,X_h(t))-v_h(t,X_h(0)) = \| G X_h(t)\|^2 - \langle L_h(t)X_h(0), X_h(0)\rangle.
\end{equation*}
The main part of the proof is based on applying the It\^{o} formula to deduce that
\begin{align}\label{eq:toprove}
\begin{split}
&v_h(0,X_h(t))-v_h(t,X_h(0))\\
&\qquad
  =
  -\int_0^t
    \|RX_h(s)\|^2
  \diff s
  +
  2
  \int_0^t
    \langle
      L_h(s)X_h(s),P_h B(X_h(s)) \diff W(s)
    \rangle.
\end{split}
\end{align}
Once this has been established, taking expectations on both sides completes the proof since the stochastic integral vanishes.

We now prove \eqref{eq:toprove}. In the following application of the It\^o formula, we use explicit expressions for the derivatives $\partial v_h/\partial t$, $\partial v_h/\partial x$, $\partial^2 v_h/\partial x^2$. From~\eqref{eq:Lyapunov_semidiscrete_var}, for $x,\phi,\psi\in V_h$ the time derivative $\partial v_h/\partial t$ satisfies
\begin{equation}\label{eq:time}
-\frac{\partial v_h}{\partial t}
  (t,x)
  =
  2
  \langle
    L_h(t)x,A_hx
  \rangle
  -
  \|Rx\|^2
  -
  \sum_{n\in \N}
  \langle
    L_h(t)P_hB(x)e_n,P_hB(x)e_n
  \rangle,
\end{equation}
where $(e_n)_{n=1}^\infty \subset U$ denotes an arbitrary orthonormal basis. By direct calculations the space derivatives $\partial v_h/\partial x$ and $\partial^2 v_h/\partial x^2$ are for $x,\phi,\psi\in V_h$ given by
\begin{equation}\label{eq:space}
\frac{\partial v_h}{\partial x}(t,x)(\phi)
  =
  2
  \langle
    L_h(t)x,\phi
  \rangle,
  \quad
  \frac{\partial^2 v_h}{\partial x^2} (t,x)(\phi,\psi)
  =
  2
  \langle
    L_h(t)\phi,\psi
  \rangle.
\end{equation}
Since $A_h\in \LB(V_h)$, the semidiscrete solution $X_h$ is a strong solution, meaning that $\P$-{a.s.}
\begin{equation*}
  X_h(t) 
  = 
  X_h(0)
  - 
  \int_0^t 
    A_h X_h(s) 
  \diff s
  +
  \int_0^t
    P_h B(X_h(s))
  \diff W(s).
\end{equation*}
Therefore we can apply the It\^o formula \cite[Theorem~2.4]{brzezniak2008} to the function $[0,t]\times V_h \ni (s,x) \mapsto v_h(t-s,x)$ to obtain
\begin{align*}
&v_h(0,X_h(t))-v_h(t,X_h(0))\\
&\qquad
  = -
    \int_0^t
    \frac{\partial v_h}{\partial s}
    (t-s,X_h(s))
  \diff s
  -
  \int_0^t
    \frac{\partial v_h}{\partial x}
    (t-s,X_h(s)) (A_h X_h(s))
  \diff s \\
  &\qquad\quad  +
  \int_0^t
  \frac{\partial v_h}{\partial x}
  (t-s,X_h(s)) (P_h B(X_h(s)) \diff W(s))
\\
&\qquad\quad
  +
  \tfrac12
  \sum_{n\in\N}
  \int_0^t
    \frac{\partial^2 v_h}{\partial x^2}
    (t-s,X_h(s))
    ((P_hBX_h(s))e_n,(P_hBX_h(s))e_n)
  \diff s.
\end{align*}
Inserting the expressions from~\eqref{eq:time} and~\eqref{eq:space} proves~\eqref{eq:toprove} by cancellations.
\end{proof}

We are finally in place to show~\eqref{eq:polarization_identity} even in the time dependent setting. Therefore we set with a slight abuse of notation
\begin{equation*}
	\Phi(x,t)=
	\E
	\Big[
	\int_0^t
	\|RX(s)\|^2
	\diff s
	+
	\|GX(t)\|^2
	\,\big|\,
	X_0=x
	\Big].
\end{equation*}
 The proof of the following theorem is based on the convergence of the semidiscrete approximations in Section~\ref{subsec:semidiscreteApprox} and the equality in~$V_h$.

\begin{theorem}\label{thm:Phi}
Let $X, L$ be the mild solutions to~\eqref{eq:SPDE_mild} and \eqref{eq:Lyapunov_mild}, respectively. Then for all $t\in \IT_0$ and $x\in H$
\begin{equation*}
  \langle
    L(t) x,x
  \rangle
  = \Phi(x,t)
\end{equation*}
and more specifically \eqref{eq:polarization_identity} is satisfied setting $t = T$.
\end{theorem}

\begin{proof}
By the triangle inequality we have that
\begin{align*}
 \left| \langle L(t)x,x \rangle
    - \Phi(x,t)\right|
    & \le \left|\langle (L(t)-L_h(t)P_h)x,x \rangle \right|
        + \left| \langle L_h(t) P_h x, x \rangle
                - \Phi_h(P_h x,t)\right|\\
        & \qquad + \left| \Phi_h(P_h x,t) - \Phi(x,t)\right|.
\end{align*}
We prove that the right hand side converges to zero as $h$ goes to~$0$. Proposition~\ref{prop:spat} with $\theta_1=\theta_2=0$ guarantees that
\begin{equation*}
  \lim_{h\downarrow0}
  \left|\langle (L(t)-L_h(t)P_h) x,x \rangle \right|
  \leq 
  \lim_{h\downarrow0}
  \|L(t) - L_h(t)P_h\|_{\LB(H)}\|x\|^2 
  = 0.  
\end{equation*}
The second term vanishes by Lemma~\ref{thm:PhiSemidiscr} since $\langle L_h(t) P_h x, x \rangle = \langle L_h(t) P_h x, P_h x \rangle$. 
The strong convergence in Proposition~\ref{prop:strong} and the uniform moment bounds~\eqref{eq:X_moment} and \eqref{eq:moment} imply in particular convergence of the quadratic functional and thus
\begin{align*}
  \lim_{h\downarrow0}
  \left| \Phi_h(P_h x,t) - \Phi(x,t)\right|
&\leq
  \lim_{h\downarrow0}
  \Bigg|
    \E \left[
    \int_0^t
      \big(
        \|RX_h(s)\|^2
        -
        \|RX(s)\|^2
      \big)
    \diff s
  \right]
  \Bigg|\\
&\qquad
  +
  \lim_{h\downarrow0}
  \Big|
    \E
      \big[
        \|GX_h(t)\|^2
        -
        \|GX(t)\|^2
      \big]
  \Big| = 0,
\end{align*}
i.e., the convergence of the last term.
\end{proof}

Using the polarization identity one can extend the result to bilinear forms. More specifically, let $Y$ be the mild solution to~\eqref{eq:SPDE_mild} with initial condition~$Y_0$, then for all $t\in \IT_0$ and $x, y\in H$
	\begin{equation*}
	\langle
	L(t) x,y
	\rangle
	=
	\E
	\Big[
	\int_0^t
	\langle 
	RX(s), R Y(s)
	\rangle
	\diff s
	+
	\langle 
	GX(t), G Y(t)
	\rangle
	\,\big|\,
	X_0=x, Y_0 = y
	\Big].
	\end{equation*}

Given the connection between the Lyapunov equation and $\Phi$, Theorem~\ref{thm:Phi} implies weak convergence with twice the strong rate of the semidiscrete scheme~\eqref{eq:SPDE_spatial} in a non-standard way. This extends the weak convergence result (in the multiplicative noise setting) of~\cite{AnderssonLarsson2016} to smooth noise with $\beta \in [1/2,1]$, albeit for a different class of test functions.

\begin{corollary}
Let $\Phi$ and $\Phi_h$ be the quadratic functionals in Theorem~\ref{thm:Phi} and Lemma~\ref{thm:PhiSemidiscr}, respectively.
Then, for all $\rho\in(0,\beta)$, there exists a constant $C>0$ such that for all $h\in(0,1)$ and $t \in \IT_0$
\begin{equation*}
  \big|
    \Phi(X_0,t)-\Phi_h(P_h X_0,t)
  \big|
  \leq
  C t^{-\rho} h^{2\rho} \| X_0\|^2.
\end{equation*}
\end{corollary}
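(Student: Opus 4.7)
The plan is to use the deterministic representations of $\Phi$ and $\Phi_h$ via the Lyapunov solutions $L$ and $L_h$ to reduce the claim to an operator norm estimate, and then apply Proposition~\ref{thm:spat}\ref{thm:spat:conv} to conclude.

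By Theorem~\ref{thm:Phi}, we have $\Phi(X_0,t) = \langle L(t) X_0, X_0 \rangle$, and by Lemma~\ref{thm:PhiSemidiscr}, $\Phi_h(P_h X_0, t) = \langle L_h(t) P_h X_0, P_h X_0 \rangle$. Since $L_h(t)$ takes values in $\LB(V_h)$, we have $L_h(t) P_h X_0 \in V_h$, and the self-adjointness of $P_h$ implies
\begin{equation*}
\langle L_h(t) P_h X_0, P_h X_0 \rangle = \langle P_h L_h(t) P_h X_0, X_0 \rangle = \langle L_h(t) P_h X_0, X_0 \rangle.
\end{equation*}
Therefore the difference can be written as
\begin{equation*}
\Phi(X_0,t) - \Phi_h(P_h X_0, t) = \langle (L(t) - L_h(t) P_h) X_0, X_0 \rangle.
\end{equation*}

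The Cauchy--Schwarz inequality then yields
\begin{equation*}
|\Phi(X_0,t) - \Phi_h(P_h X_0, t)| \leq \|L(t) - L_h(t) P_h\|_{\LB(H)}\, \|X_0\|^2,
\end{equation*}
so it only remains to bound the operator norm on the right-hand side. This is precisely Proposition~\ref{thm:spat}\ref{thm:spat:conv} applied with $\theta_1 = \theta_2 = 0$ (which satisfies $\theta_1 + \theta_2 + 2\rho = 2\rho < 2$ for any $\rho \in (0,\beta) \subset (0,1)$), giving $\|L(t) - L_h(t) P_h\|_{\LB(H)} \leq C\, t^{-\rho} h^{2\rho}$ for some constant $C>0$ independent of $h$ and $t$. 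Combining the two bounds yields the claim.

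No real obstacle is expected here: the corollary is essentially a packaging of the identity $\Phi \leftrightarrow \langle L \cdot, \cdot\rangle$ established in Theorem~\ref{thm:Phi} together with the already-proven semidiscrete convergence of $L_h$. The only minor subtlety is verifying that $\langle L_h(t) P_h X_0, X_0 \rangle$ equals $\langle L_h(t) P_h X_0, P_h X_0 \rangle$, which uses the fact that the range of $L_h(t) P_h$ is contained in $V_h$.
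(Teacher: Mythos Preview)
Your proposal is correct and follows essentially the same approach as the paper: both reduce the claim to $|\langle (L_h(t)P_h - L(t))X_0, X_0\rangle|$ via Theorem~\ref{thm:Phi} and Lemma~\ref{thm:PhiSemidiscr}, and then apply Proposition~\ref{thm:spat}\ref{thm:spat:conv} with $\theta_1=\theta_2=0$. Your added justification that $\langle L_h(t)P_hX_0,P_hX_0\rangle = \langle L_h(t)P_hX_0,X_0\rangle$ makes explicit a step the paper leaves implicit.
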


\begin{proof}
Lemma~\ref{thm:PhiSemidiscr} and Theorem~\ref{thm:Phi} imply that
\begin{equation*}
\big|
    \Phi_h(P_h X_0,t)-\Phi(X_0,t)
  \big|
  = 
  \big|
    \big\langle 
      (L_h(t)P_h-L(t))X_0,X_0
    \big\rangle
  \big|.
\end{equation*}
The claim follows by Proposition~\ref{prop:spat}\ref{thm:spat:conv}.
\end{proof}

\section{Fully discrete approximation of the Lyapunov equation}
\label{sec:fully_discrete_Lyapunov}

This section is devoted to the stability and convergence analysis of a fully discrete scheme for the Lyapunov equation~\eqref{eq:Lyapunov_var}. It is based on the $\cL(V_h)$-valued implicit Euler approximation $(S_{h,\tau}^n)_{n\in\{0,\dots,N_\tau\}}$ of the semigroup $S$, introduced in Section~\ref{sec:setting}. Inspired by the mild solution~\eqref{eq:Lyapunov_mild},
we define the fully discrete approximation $L^n_{h,\tau}$ of $L(t_n)$, $n\in\{1,\dots,N_\tau\}$, by the discrete variation of constants formula
\begin{equation}\label{eq:Lyapunov_discrete_VC1}
L^n_{h,\tau}
=
S^n_{h,\tau}G^*GS^n_{h,\tau}
+
\tau
\sum_{j=0}^{n-1}
S^{n-j}_{h,\tau} 
\big(
R^*R
+
B^*L^j_{h,\tau} P_hB
\big)
S^{n-j}_{h,\tau}
\end{equation}
with $L_{h,\tau}^0=P_h G^*GP_h$. As recursion it reads
\begin{equation}\label{eq:Lyapunov_one_step}
L_{h,\tau}^n
=
S_{h,\tau}L_{h,\tau}^{n-1}S_{h,\tau}
+
S_{h,\tau}
\big(
\tau
R^* R 
+
\tau
B^* L_{h,\tau}^{n-1} P_h B
\big)
S_{h,\tau},
\end{equation} or equivalently
\begin{equation}
\label{eq:Lyapunov_one_step_A}
(P_h+\tau A_h)L_{h,\tau}^n(P_h+\tau A_h) =
L_{h,\tau}^{n-1}
+
\tau
P_h R^* R P_h
+
\tau
P_h B^* L_{h,\tau}^{n-1} P_hBP_h.
\end{equation} 
Note that $L^n_{h,\tau} \in \Sigma(V_h)$ for all $n \in \{1,\ldots,N_\tau\}$.

Before proving convergence, let us first show regularity of the fully discrete approximation, which is the analog result to Theorem~\ref{thm:Lyapunov} and Proposition~\ref{prop:spat}.

\begin{theorem}\label{thm:apriori1}
For all $c>0$ and  $\theta_1,\theta_2\in[0,2)$ with $\theta_1+\theta_2<2$, there exists a constant $C>0$ such that for $h \in (0,1]$, $\tau\le c h^{2(1-\beta)}$ and $n\in\{1,\dots,N_\tau\}$
\begin{equation*}
  \|A_h^{\frac{\theta_1}2}L_{h,\tau}^nA_h^{\frac{\theta_2}2}\|_{\LB(H)}
  \leq
  Ct_n^{-\frac{\theta_1 + \theta_2}{2}}.
\end{equation*}
\end{theorem}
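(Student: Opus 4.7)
The plan is to mirror the proof of Theorem~\ref{thm:Lyapunov}\ref{eq:spat_reg} and Proposition~\ref{thm:spat}\ref{thm:spat:bound} in the fully discrete setting, starting from the discrete variation of constants formula~\eqref{eq:Lyapunov_discrete_VC1} with integrals replaced by Riemann sums. Applying $A_h^{\theta_1/2}$ on the left, $A_h^{\theta_2/2}$ on the right, and the triangle inequality splits $\|A_h^{\theta_1/2}L_{h,\tau}^n A_h^{\theta_2/2}\|_{\LB(H)}$ into three contributions $I_n + J_n + K_n$ corresponding to the maps $\cI$, $\cJ$ and $\cK$ of Theorem~\ref{thm:Lyapunov}. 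The discrete semigroup bound~\eqref{eq:S_h_bound} immediately yields $I_n \leq g^2 D_{\theta_1}D_{\theta_2}t_n^{-(\theta_1+\theta_2)/2}$, and together with the elementary discrete beta-type estimate $\tau\sum_{j=0}^{n-1}t_{n-j}^{-(\theta_1+\theta_2)/2}\lesssim t_n^{1-(\theta_1+\theta_2)/2}$ valid for $\theta_1+\theta_2<2$, it gives $J_n\lesssim t_n^{1-(\theta_1+\theta_2)/2}\leq T\, t_n^{-(\theta_1+\theta_2)/2}$.

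For $K_n$, applying \eqref{eq:B_Y_bound} with $Y=L^j_{h,\tau}$ produces
\begin{equation*}
K_n \lesssim \tau\sum_{j=0}^{n-1} t_{n-j}^{-(\theta_1+\theta_2)/2}\,\|A_h^{(1-\beta)/2}L^j_{h,\tau}A_h^{(1-\beta)/2}\|_{\LB(H)}.
\end{equation*}
This is the point where the fully discrete setting genuinely differs from the continuous one: the $j=0$ summand evaluates the integrand at $s=0$, where $L^0_{h,\tau}=P_hG^*GP_h$ carries no semigroup smoothing. Using the inverse estimate Assumption~\ref{ass:Ah}\ref{eq:inv_ineq} I would bound $\|A_h^{(1-\beta)/2}L^0_{h,\tau}A_h^{(1-\beta)/2}\|_{\LB(H)}\leq g^2 D_{1-\beta}^2 h^{-2(1-\beta)}$, and the hypothesis $\tau\leq ch^2$ then gives $\tau h^{-2(1-\beta)}\leq c h^{2\beta}\leq c$. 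This is the sole place the CFL-like condition enters and it makes the $j=0$ contribution to $K_n$ bounded by $C t_n^{-(\theta_1+\theta_2)/2}$.

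The remaining step, which I expect to be the main technical hurdle, is to close a discrete singular Gronwall inequality on the $j\geq 1$ part of $K_n$. Specializing first to $\theta_1=\theta_2=1-\beta$ and writing $\phi(j):=\|A_h^{(1-\beta)/2}L^j_{h,\tau}A_h^{(1-\beta)/2}\|_{\LB(H)}$, the estimates above reduce to
\begin{equation*}
\phi(n)\leq C_1 t_n^{\beta-1}+C_2\tau\sum_{j=1}^{n-1}t_{n-j}^{\beta-1}\phi(j).
\end{equation*}
Its continuous analog is handled by Henry's singular Gronwall lemma as in the proof of Proposition~\ref{thm:spat}\ref{thm:spat:conv}; here I would either invoke a standard discrete Abel-kernel Gronwall lemma or give a direct proof using the exponentially weighted quantity $\max_j e^{-\sigma t_j}t_j^{1-\beta}\phi(j)$, choosing $\sigma$ large enough to turn the kernel into a strict contraction, in the spirit of the $\triple\cdot\triple_\sigma$ argument of Theorem~\ref{thm:Lyapunov}. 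Once $\phi(j)\lesssim t_j^{\beta-1}$ has been established uniformly in $h,\tau$, a bootstrap substituting this bound back into the expression for $K_n$ and using $\tau\sum_{j=1}^{n-1}t_{n-j}^{-(\theta_1+\theta_2)/2}t_j^{\beta-1}\lesssim t_n^{\beta-(\theta_1+\theta_2)/2}\leq T^\beta t_n^{-(\theta_1+\theta_2)/2}$ produces the claim for arbitrary admissible $\theta_1,\theta_2$.
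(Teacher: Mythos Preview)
Your proposal is correct and follows essentially the same approach as the paper: split via the discrete variation of constants formula, bound the $G$- and $R$-terms directly with~\eqref{eq:S_h_bound}, use~\eqref{eq:B_Y_bound} for the $B$-term, absorb the $j=0$ summand via the inverse inequality together with $\tau\le ch^2$, close a discrete singular Gronwall inequality at $\theta_1=\theta_2=1-\beta$, and bootstrap. The paper simply cites a discrete Gronwall lemma (Elliott, 1992) for the closing step rather than giving the weighted-norm argument, but otherwise your outline matches the proof in structure and in every essential detail.
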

\begin{proof}
We fix $n \in \{1,\ldots, N_\tau\}$. By multiplying~\eqref{eq:Lyapunov_discrete_VC1} with $A_h^{\theta_1/2}$ from the left and $A_h^{\theta_2/2}$ from the right we obtain 
\begin{align*}
  &\big\|
    A_h^{\frac{\theta_1}2}L^n_{h,\tau}A_h^{\frac{\theta_2}2}
  \big\|_{\LB(H)}\\
  &\quad\leq
  \big\|
    A_h^{\frac{\theta_1}2}
    S^n_{h,\tau}G^*GS^n_{h,\tau}
    A_h^{\frac{\theta_2}2}
  \big\|_{\LB(H)}
  +
  \tau
  \sum_{j=0}^{n-1}
    \big\|
      A_h^{\frac{\theta_1}2}
      S^{n-j}_{h,\tau}
      R^*R
      S^{n-j}_{h,\tau}
      A_h^{\frac{\theta_2}2}
    \big\|_{\LB(H)}\\
&\quad\quad
  +
  \tau
  \sum_{j=0}^{n-1}
    \big\|
      A_h^{\frac{\theta_1}2}
      S^{n-j}_{h,\tau}
      B^*L^j_{h,\tau} P_hB
      S^{n-j}_{h,\tau}
      A_h^{\frac{\theta_2}2}
    \big\|_{\LB(H)}\\
&\quad\leq
  g^2
  \big\|
    A_h^{\frac{\theta_1}2}
    S^n_{h,\tau}
  \big\|_{\LB(H)}
  \big\|
    S^n_{h,\tau}
    A_h^{\frac{\theta_2}2}
  \big\|_{\LB(H)}
  + r^2
  \tau
  \sum_{j=0}^{n-1}
    \big\|
      A_h^{\frac{\theta_1}2}
      S^{n-j}_{h,\tau}
    \big\|_{\LB(H)}
    \big\|
      S^{n-j}_{h,\tau}
      A_h^{\frac{\theta_2}2}
    \big\|_{\LB(H)}\\
&\quad\quad
  +
  \tau
  \sum_{j=0}^{n-1}
    \big\|
      A_h^{\frac{\theta_1}2}
      S^{n-j}_{h,\tau}
    \big\|_{\LB(H)}
    \big\|
      B^*L^j_{h,\tau} P_h B
    \big\|_{\LB(H)}
    \big\|
      S^{n-j}_{h,\tau}
      A_h^{\frac{\theta_2}2}
    \big\|_{\LB(H)}.
\end{align*} 
For the term containing $B$, we have by~\eqref{eq:B_Y_bound} that
\begin{equation*}
\|B^* L_{h,\tau}^j P_h B\|_{\LB(H)} \leq
b^2 
D_{1-\beta}^2
\|A_h^{\frac{1-\beta}2} L_{h,\tau}^j A_h^{\frac{1-\beta}2} \|_{\LB(H)}.
\end{equation*}
 For the other terms we use the fact that by~\eqref{eq:S_h_bound}, with $i=1,2$ and $j = 1, \ldots, n$
\begin{equation*}
	\big\|
	A_h^{\frac{\theta_i}2}
	S^{j}_{h,\tau}
	\big\|_{\LB(H)} = 
	\big\|
	S^{j}_{h,\tau}
	A_h^{\frac{\theta_i}2}
	\big\|_{\LB(H)}
	\le 
	D_{\theta_i} t_j^{-\frac{\theta_i}{2}}.
\end{equation*}
This then yields
\begin{align*}
&\big\|
    A_h^{\frac{\theta_1}2}L^n_{h,\tau}A_h^{\frac{\theta_2}2}
  \big\|_{\LB(H)}\\
  & \quad\quad\lesssim
  t_n^{-\frac{\theta_1 + \theta_2}{2}}
  +
  \tau
  \sum_{j=0}^{n-1}
    t_{n-j}^{-\frac{\theta_1 + \theta_2}{2}}
  +
  \tau
  \sum_{j=0}^{n-1}
    t_{n-j}^{-\frac{\theta_1 + \theta_2}{2}}
    \big\|
      A_h^{\frac{1-\beta}2}L^j_{h,\tau} 
      A_h^{\frac{1-\beta}2}
    \big\|_{\LB(H)}.
\end{align*}
For the first sum we have
$
  \tau\sum_{j=0}^{n-1}t_{n-j}^{-(\theta_1 + \theta_2)/2}
  \lesssim 
  \int_0^{t_n}t^{-(\theta_1 + \theta_2)/2}\diff t
  \lesssim 
  t_n^{1-(\theta_1 + \theta_2)/2}
$, so taking $\theta_1 = \theta_2 =1-\beta$ and using the discrete Gronwall lemma (cf.~\cite{elliott1992}) proves the claim for this special case and implies for $\theta_1, \theta_2\in[0,1)$, $(\theta_1 + \theta_2)/2 < 1$, in the above estimate that
\begin{align*}
  \big\|
    A_h^{\frac{\theta_1}2}L^n_{h,\tau}A_h^{\frac{\theta_2}2}
  \big\|_{\LB(H)}\lesssim
  t_n^{-\frac{\theta_1 + \theta_2}{2}}
  \Big(
    2
    +
    \tau
    \big\|
      A_h^{\frac{1-\beta}2}L_{h,\tau}^0 A_h^{\frac{1-\beta}2}
    \big\|_{\LB(H)}
  \Big)
  +
  \tau
  \sum_{j=1}^{n-1}
    t_{n-j}^{-\frac{\theta_1 + \theta_2}{2}}
    t_j^{\beta-1}.
\end{align*}
The proof is completed by observing that 
\begin{equation*}
  \tau
  \sum_{j=1}^{n-1}
  t_{n-j}^{-\frac{\theta_1 + \theta_2}{2}}
  t_j^{\beta-1}
  \lesssim
  \int_0^{t_n}
    t^{\beta-1}
    (t_n-t)^{-\frac{\theta_1 + \theta_2}{2}}
  \diff t
  \lesssim
  t_n^{\beta-\frac{\theta_1 + \theta_2}{2}}
\end{equation*} 
and that, by~Assumption~\ref{ass:Ah}\ref{eq:inv_ineq} and the bound $\tau\le c h^{2(1-\beta)}$,
\begin{equation*}
  \tau
  \big\|
    A_h^{\frac{1-\beta}2}L_{h,\tau}^0 A_h^{\frac{1-\beta}2}
  \big\|_{\LB(H)}
  \leq
  \tau
  c_0^2
  g^2
  h^{2\beta-2}
  \leq
  c
  c_0^2
  g^2
  \lesssim 1.\qedhere
\end{equation*}
\end{proof}

We are now in place to prove convergence of the fully discrete approximation of~\eqref{eq:Lyapunov_mild} with the same convergence rate~$2\rho$ in space as in the semidiscrete setting in Proposition~\ref{prop:spat} and rate~$\rho$ in time.

\begin{theorem}\label{thm:conv1}
For all $c>0$, $\rho\in(0,\beta)$ and $\theta\in[0,1)$ with $\rho+\theta<1$, there exists a constant $C>0$ satisfying for 
$h\in(0,1)$, $\tau\leq ch^{2 (1 - \beta)/(1-\rho)}$ and
 $n\in\{1,\dots,N_\tau\}$
\begin{equation*}
 \|L_{h,\tau}^nP_h-L(t_n) \|_{\LB(\dot{H}^{-\theta},\dot{H}^{\theta})}
=
\big\|
    A^{\frac{\theta}2}
    \big(
      L_{h,\tau}^nP_h-L(t_n) 
    \big)
    A^{\frac{\theta}2}
  \big\|_{\LB(H)}
  \leq
  C
  t_n^{-\rho-\theta}
  (h^{2\rho} + \tau^\rho).
\end{equation*}
\end{theorem}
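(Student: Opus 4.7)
The plan is to split the error by the triangle inequality as
\begin{align*}
\bigl\|A^{\theta/2}(L_{h,\tau}^nP_h-L(t_n))A^{\theta/2}\bigr\|_{\LB(H)}
&\le \bigl\|A^{\theta/2}(L_{h,\tau}^nP_h-L_h(t_n)P_h)A^{\theta/2}\bigr\|_{\LB(H)} \\
&\quad + \bigl\|A^{\theta/2}(L_h(t_n)P_h-L(t_n))A^{\theta/2}\bigr\|_{\LB(H)}.
\end{align*}
The second summand is immediately bounded by $Ct_n^{-\theta-\rho}h^{2\rho}$ via Proposition~\ref{thm:spat}\ref{thm:spat:conv} (taking $\theta_1=\theta_2=\theta$; note that the hypothesis $\theta+\rho<1$ matches $\theta_1+\theta_2+2\rho<2$). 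For the first summand, I would use \eqref{eq:AhA2} to pass from $A^{\theta/2}$ to $A_h^{\theta/2}$ and therefore focus on $\|A_h^{\theta/2}(L_{h,\tau}^n-L_h(t_n))A_h^{\theta/2}\|_{\LB(H)}$, showing that it is bounded by $C t_n^{-\theta-\rho}\tau^\rho$; since $\tau\le ch^2$, this gives $\tau^\rho\le c^\rho h^{2\rho}$ and closes the argument.

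To obtain the latter estimate, I would subtract \eqref{eq:Lyapunov_semidiscrete_mild} at $t=t_n$ from \eqref{eq:Lyapunov_discrete_VC1}, writing the integral as $\sum_{j=0}^{n-1}\int_{t_j}^{t_{j+1}}$ and splitting each quadrature error into three standard pieces,
\begin{align*}
&S_{h,\tau}^{n-j}F_{h,\tau}^j S_{h,\tau}^{n-j}-S_h(t_n-s)F_h(s)S_h(t_n-s) \\
&\quad = (S_{h,\tau}^{n-j}-S_h(t_n-s))F_{h,\tau}^j S_{h,\tau}^{n-j} + S_h(t_n-s)F_{h,\tau}^j(S_{h,\tau}^{n-j}-S_h(t_n-s))\\
&\qquad + S_h(t_n-s)(F_{h,\tau}^j-F_h(s))S_h(t_n-s),
\end{align*}
where $F_{h,\tau}^j=R^*R+B^*L_{h,\tau}^jP_hB$ and $F_h(s)=R^*R+B^*L_h(s)P_hB$. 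The $G$-piece $S_{h,\tau}^nG^*GS_{h,\tau}^n-S_h(t_n)G^*GS_h(t_n)$ is treated analogously via $S_{h,\tau}^n-S_h(t_n)=E_{h,\tau}^n$. The first two pieces are controlled using \eqref{eq:S_h_bound}, \eqref{eq:E_h_bound} (and a variant for $s\in(t_j,t_{j+1})$ obtained by combining $E_{h,\tau}^{n-j}$ with \eqref{eq:S_h_Holder} applied to $S_h(t_n-s)-S_h(t_n-t_j)$), together with Theorem~\ref{thm:apriori1} and \eqref{eq:B_Y_bound} for the $B^*L_{h,\tau}^jP_hB$ factor. In the third piece, the $R^*R$ contribution vanishes, while for the $B$-contribution I would use the Hölder regularity Theorem~\ref{thm:spat}\ref{thm:spat:temp_reg} (with a small $\xi$) to bound $\|A_h^{(1-\beta)/2}(L_h(t_j)-L_h(s))A_h^{(1-\beta)/2}\|_{\LB(H)}$, and separately isolate the remaining summand $B^*(L_{h,\tau}^j-L_h(t_j))P_hB$ which will be absorbed by Gronwall.

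Collecting these bounds via \eqref{eq:beta_integral} (more precisely, its discrete analog) gives an inequality of the form
\begin{equation*}
\bigl\|A_h^{\theta/2}(L_{h,\tau}^n-L_h(t_n))A_h^{\theta/2}\bigr\|_{\LB(H)}\lesssim \tau^\rho t_n^{-\theta-\rho}+\tau\sum_{j=1}^{n-1}(t_n-t_j)^{-\theta}t_j^{\beta-1}\bigl\|A_h^{(1-\beta)/2}(L_{h,\tau}^j-L_h(t_j))A_h^{(1-\beta)/2}\bigr\|_{\LB(H)},
\end{equation*}
where the $j=0$ term is handled separately using $L_{h,\tau}^0P_h=L_h(0)P_h=P_hG^*GP_h$ together with $\tau\le ch^2$. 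Setting $\theta=1-\beta$ makes this a discrete singular Gronwall inequality, whose solution is $\lesssim \tau^\rho t_n^{-(1-\beta)-\rho}$; bootstrapping this back into the right-hand side for general $\theta\in[0,1)$ with $\theta+\rho<1$ yields the sought bound $\tau^\rho t_n^{-\theta-\rho}$. The main obstacle I expect is the careful treatment of the three-way splitting and, in particular, showing that the contribution of $F_{h,\tau}^j-F_h(s)$ on the first subinterval (where the singularity of the Hölder estimate in Theorem~\ref{thm:spat}\ref{thm:spat:temp_reg} meets the initial condition) produces the correct singularity $t_n^{-\theta-\rho}$ rather than a worse one; this should follow from choosing $\xi\in(0,\rho)$ in Theorem~\ref{thm:spat}\ref{thm:spat:temp_reg} and a standard treatment of the $j=0$ term via the uniform a priori bound Theorem~\ref{thm:apriori1}.
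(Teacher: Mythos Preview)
Your proposal is correct and follows essentially the same approach as the paper: split via the triangle inequality into the semidiscrete error (handled by Proposition~\ref{thm:spat}\ref{thm:spat:conv}) and the time-discretization error, then compare the two variation-of-constants formulas, use the H\"older regularity of $L_h$ from Proposition~\ref{thm:spat}\ref{thm:spat:temp_reg} (with $\xi=\rho$) together with the special treatment of the first subinterval via $\tau\le ch^2$, and close with a discrete Gronwall argument at $\theta=1-\beta$ followed by a bootstrap. The paper organizes the computation slightly more cleanly by introducing the right-continuous interpolations $\tilde S$, $\tilde L$, $\tilde L_h$ and exploiting self-adjointness to write the norm as a quadratic form, which halves the number of cross terms; also, your Gronwall sum should not carry the factor $t_j^{\beta-1}$ (it arises only after substituting the Gronwall conclusion back in the bootstrap step), and the semigroup increment $S_h(t_n-s)-S_h(t_{n-j})$ is controlled via \eqref{eq:Analytic2} rather than \eqref{eq:S_h_Holder}.
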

\begin{proof}
	With Proposition~\ref{prop:spat}, the triangle inequality, \eqref{eq:AhA} and~\eqref{eq:AhA2} it suffices to prove that under the conditions of this theorem, there exists $C>0$ such that for $h\in(0,1)$, $\tau\leq ch^{2 (1 - \beta)/(1-\rho)}$ and $n\in\{1,\dots,N_\tau\}$
	\begin{equation*}
	\big\|
	A_h^{\frac{\theta}2}
	\big(
	L_{h,\tau}^nP_h-L_h(t_n) P_h
	\big)
	A_h^{\frac{\theta}2}
	\big\|_{\LB(H)}
	\leq
	C
	t_n^{-\rho-\theta} \tau^\rho.
	\end{equation*}
We introduce the right-continuous interpolation $\tilde S=(\tilde S(t))_{t\in[0,T)}$ of $S_{h,\tau}$ given by
\begin{equation*}
  \tilde S(t)
  =
  \sum_{n=1}^{N_\tau} \mathbf{1}_{[t_{n-1},t_n)}(t)S^n_{h,\tau},
\end{equation*}
for which we by~\eqref{eq:AhA} and \eqref{eq:S_h_bound} have for $r \in [0,1]$ the existence of a constant $D_r$ such that for all $t \in \IT_0$
\begin{equation}
	\label{eq:S_tilde_bound}
	\| \tilde S(t) A_h^{\frac{r}{2}}\|_{\LB(H)} = \| A_h^{\frac{r}{2}} \tilde S(t) \|_{\LB(H)}
	\leq
	D_r t^{-\frac{r}{2}}.
\end{equation}
We also introduce the corresponding error operator $\tilde E=\tilde S-S_h$ and extend the fully discrete solution to continuous time by $\tilde L(t)=L_{h,\tau}^n P_h$, for $t\in[t_n,t_{n+1})$.  From \eqref{eq:Analytic}, \eqref{eq:Analytic2} and \eqref{eq:E_h_bound} we obtain the existence of a constant $D_\theta$ such that for all $t>0$ 
\begin{equation}
\label{eq:E_bound}
  \|A_h^{\frac{\theta}2}\tilde E(t)\|_{\LB(H)}
  \leq
  D_\theta t^{-\frac{\theta}2-\rho}\tau^{\rho}
  .
\end{equation}
Using this notation it follows from \eqref{eq:Lyapunov_discrete_VC1} that for all $n\in\{1,\dots, N_\tau\}$ and $\phi \in H$
\begin{equation*}
  \tilde L(t_n)\phi
=  
   S^n_{h,\tau}G^*G S^n_{h,\tau}\phi
  +
  \int_0^{t_n}
    \tilde S(t_n-s)
    \big(
      R^*R
      +
      B^* \tilde L(s) P_h B 
    \big)
    \tilde S(t_n-s)\phi
  \diff s.
\end{equation*}
Since $A_h^{\theta/2}, S_h, \tilde S, L_{h,\tau}, L_h$ and $\tilde{E}$ are self-adjoint at all times, this equality and~\eqref{eq:Lyapunov_semidiscrete_mild} along with
\begin{equation*}
\big\|A_h^{\frac{\theta}2}
\big(
L_{h,\tau}^n-L_h(t_n)
\big)A_h^{\frac{\theta}2}\big\|_{\LB(H)}
=
\sup_{\phi \in H, \| \phi\| = 1}
	\big|
	\big\langle
	A_h^{\frac{\theta}2}
	\big(
	L_{h,\tau}^n-L_h(t_n)
	\big)A_h^{\frac{\theta}2} \phi,\phi
	\big\rangle \big|, 
\end{equation*}
yield
\begin{align*}
\big\| &
    A_h^{\frac{\theta}2}
    \big(
      L_{h,\tau}^n-L_h(t_n)
    \big)
    A_h^{\frac{\theta}2}
  \big\|_{\LB(H)}\\
&\leq
	g^2 
	\big\|
	A_h^{\frac{\theta}2}
	 \big(S^n_{h,\tau}+
	S_h(t_n) \big)
	\big\|_{\LB(H)} 
  \big\|
   E^n_{h,\tau}
    A_h^{\frac{\theta}2}
  \big\|_{\LB(H)}\\
&\quad
  +
  r^2 
  \int_0^{t_n}
  \big\|
  A_h^{\frac{\theta}2}
  \big(\tilde S(t_n-s)+
  S_h(t_n-s)\big)
  \big\|_{\LB(H)} 
  \big\|
  \tilde E(t_n-s)
  A_h^{\frac{\theta}2}
  \big\|_{\LB(H)}
  \diff s\\
&\quad
  +
  \int_0^{t_n}
    \big\|
    A_h^{\frac{\theta}2}
    \big(\tilde S(t_n-s)+
    S_h(t_n-s)\big)
    \big\|_{\LB(H)} 
    \big\|
    B^* L_h(s) P_h B
    \tilde E(t_n-s)
    A_h^{\frac{\theta}2}
    \big\|_{\LB(H)}
  \diff s\\
&\quad
  +
  \int_0^{t_n}
    \big\|
      A_h^{\frac{\theta}2}
      \tilde S(t_n-s)B^* (\tilde L(s)-L_h(s)) P_h B \tilde S(t_n-s)
      A_h^{\frac{\theta}2}
    \big\|_{\LB(H)}
  \diff s =:
  \sum_{i=1}^4 I_i^n.
\end{align*}
For the first term we obtain with~\eqref{eq:E_h_bound}, \eqref{eq:S_h_bound} and~\eqref{eq:Analytic} that
\begin{equation*}
  I_1^n
\leq
  g^2
  \big\|
    A_h^{\frac{\theta}2}
    E^n_{h,\tau}
  \big\|_{\LB(H)}  
  \Big(
    \big\|
      S^n_{h,\tau}
      A_h^{\frac{\theta}2}
    \big\|_{\LB(H)}
    +
    \big\|
      A_h^{\frac{\theta}2}
      S_h(t_n)
    \big\|_{\LB(H)}
  \Big)
  \leq
  2 g^2 D_\theta^2
  t_n^{-\rho-\theta}
    \tau^\rho.
\end{equation*}
Similarly we use \eqref{eq:S_tilde_bound}, \eqref{eq:S_h_bound} and \eqref{eq:E_bound} for the next term to see that
\begin{align*}
  I_2^n
&\leq
  2 r^2
  D_\theta^2
  \Bigg(
    \int_0^{t_n}
      (t_n-s)^{-\theta-\rho}
    \diff s
  \Bigg)\tau^\rho
  \lesssim
  \tau^\rho.
\end{align*}
Using \eqref{eq:S_tilde_bound}, \eqref{eq:Analytic}, \eqref{eq:B_Y_bound}, \eqref{eq:frakL_0}, \eqref{eq:E_bound} and \eqref{eq:beta_integral}  yields 
\begin{align*}
  I_3^n 
&\leq
  2
  b^2
  D_{1-\beta}^2
  D_L
  D_\theta^2
  \Bigg(
    \int_0^{t_n}
      s^{\beta-1}(t_n-s)^{-\rho-\theta}
    \diff s
  \Bigg)
    \tau^\rho
  \lesssim
  t_n^{\beta-\rho-\theta}
  \tau^\rho.
\end{align*}
For the last term we add and subtract a piecewise constant approximation of~$L_h$, $\tilde L_h(t)=L_h(t_n)$ for $t\in[t_n,t_{n+1})$. With~\eqref{eq:B_Y_bound} we obtain
\begin{align*}
  I_4^n
&\leq
  b^2 D_{1-\beta}^2 \int_0^{t_n}
    \big\|
      A_h^{\frac{\theta}2}
      \tilde S(t_n-s)
    \big\|_{\LB(H)}^2 \Big(
    \big\|
    A_h^{\frac{1-\beta}2}
    \big(\tilde L_h(s)-L_h(s)\big)
    A_h^{\frac{1-\beta}2}
    \big\|_{\LB(H)}\\
    &\hspace{60mm}
    +
    \big\|
    A_h^{\frac{1-\beta}2}
    \big(\tilde L(s)-\tilde L_h(s)\big)
    A_h^{\frac{1-\beta}2}
    \big\|_{\LB(H)}
    \Big)
  \diff s.
\end{align*}
Proposition~\ref{prop:spat}\ref{thm:spat:temp_reg} yields the existence of a constant $D_{\beta,\rho}$ so that for all $s \in [t_j, t_{j+1})$ and $j \in \{1, \ldots, n-1\}$ the first of the two terms in the sum is bounded by
\begin{equation*}
	\big\|
	A_h^{\frac{1-\beta}2}
	\big(L_h(t_j)-L_h(s)\big)
	A_h^{\frac{1-\beta}2}
	\big\|_{\LB(H)}
	\le D_{\beta,\rho} t_j^{\beta-\rho-1} |t_j-s|^\rho\le D_{\beta,\rho} t_j^{\beta-\rho-1} \tau^\rho.
\end{equation*}
For $s\in [0, \tau)$, however, we use~\eqref{eq:frakL_0} and Assumption~\ref{ass:Ah}\ref{eq:inv_ineq} to bound it by
\begin{equation*}
	\big\|
	A_h^{\frac{1-\beta}2}G^*G
	A_h^{\frac{1-\beta}2}
	\big\|_{\LB(H)} 
	+
	\big\|
	A_h^{\frac{1-\beta}2}
	L_h(s)A_h^{\frac{1-\beta}2}
	\big\|_{\LB(H)}
	\le g^2 D_{\beta-1}^2 h^{2\beta-2} + D_L s^{\beta-1}.
\end{equation*}
Noting also that by~\eqref{eq:S_h_bound}, $\|
A_h^{\theta/2}
\tilde S(t_n-s)
\|_{\LB(H)} \le D_\theta t_{n-j}^{-\theta/2}$ for $s\in [t_j,t_{j+1})$ and $j \in \{0,\ldots,n-1\}$ and that $\tilde L_h(s) - \tilde L(s) = 0$ for $s \in [0,\tau_1)$, we find that
\begin{align*}
	I^n_4 &\lesssim t_n^{-\theta} \Big(\int^\tau_0 s^{\beta-1} \diff s + \tau h^{2\beta-2}\Big) + \tau^{1+\rho} \sum_{j = 1}^{n-1} t_{n-j}^{-\theta} t_j^{\beta-\rho-1} \\
	&\quad\quad + \tau \sum_{j = 1}^{n-1} t_{n-j}^{-\theta} \big\|
	A_h^{\frac{1-\beta}2}
	\big(
	L_{h,\tau}^j-L_h(t_j)
	\big)
	A_h^{\frac{1-\beta}2}
	\big\|_{\LB(H)} \\
	&\lesssim t_n^{-\theta} \big(\tau^\beta + \tau^\rho \big) + t_n^{\beta-\rho-\theta} \tau^\rho+ \tau \sum_{j = 1}^{n-1} t_{n-j}^{-\theta} \big\|
	A_h^{\frac{1-\beta}2}
	\big(
	L_{h,\tau}^j-L_h(t_j)
	\big)
	A_h^{\frac{1-\beta}2}
	\big\|_{\LB(H)},
\end{align*}
where the last inequality follows by
$
\tau
\sum_{j=1}^{n-1}
t_{n-j}^{-\theta}
t_j^{\beta-1-\rho}
\lesssim
\int_0^{t_n}
(t_{n}-s)^{-\theta}
s^{\beta-1-\rho}    
\diff s
\lesssim
t_n^{\beta-\rho-\theta}
$
and the fact that the coupling $\tau\leq ch^{2 (1 - \beta)/(1-\rho)}$ yields $\tau^{\rho}\tau^{1-\rho} h^{2 \beta-2} \leq c \tau^{\rho}$. 

Collecting the estimates, the sum of all four terms is bounded by
\begin{equation*}
  \sum_{i=1}^4 I_i^n
   \lesssim
  t_n^{-\theta-\rho}\tau^\rho
  +
  \tau\sum_{j=1}^{n-1}
    t_{n-j}^{-\theta}
    \big\|
      A_h^{\frac{1-\beta}2}
      \big(
        L_{h,\tau}^j-L_h(t_j)
      \big)
      A_h^{\frac{1-\beta}2}
    \big\|_{\LB(H)}.
\end{equation*}
The choice $\theta = 1-\beta$ implies with the discrete Gronwall lemma that
\begin{equation*}
\big\|
    A_h^{\frac{1-\beta}2}
    \big(
      L_{h,\tau}^n-L_h(t_n)
    \big)
    A_h^{\frac{1-\beta}2}
  \big\|_{\LB(H)}
  \lesssim
  t_n^{\beta-\rho-1}\tau^\rho,
\end{equation*}
which shows the claim for this special case.
Similarly to Proposition~\ref{prop:spat}, the proof is completed by a bootstrap argument.
\end{proof}

As a consequence, we obtain convergence of the approximation of the quadratic functional~\eqref{eq:Phi} by the Lyapunov equation of up to rate $\beta$ in time and double this rate in space.
\begin{corollary}
	\label{cor:Phi_bound_Lyap}
For all $c>0$ and $\rho\in(0,\beta)$, there exists a constant $C>0$ satisfying for $h\in(0,1)$, $\tau\leq ch^{2 (1 - \beta)/(1-\rho)}$ and $x \in H$
\begin{equation*}
\big|
\Phi(x) - \Phi_{h,\tau}^\mathrm{L}(x)
\big|
\leq 
C T^{-\rho}
(h^{2\rho} + \tau^\rho) \|x\|^2,
\end{equation*}
where $\Phi_{h,\tau}^\mathrm{L}(x) = \langle L_{h,\tau}^N P_h x, P_h x\rangle$.
\end{corollary}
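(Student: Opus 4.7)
The claim reduces almost immediately to the convergence result for the fully discrete Lyapunov equation already proven in Theorem~\ref{thm:conv1}. My plan is to first convert the statement into a bound on the operator difference $L(T) - L_{h,\tau}^N P_h$ in $\LB(H)$, and then invoke Theorem~\ref{thm:conv1} with $\theta = 0$.

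To begin, by Theorem~\ref{thm:Phi} (applied at $t = T$) we have $\Phi(x) = \langle L(T) x, x\rangle$ for every $x \in H$. For the discrete counterpart, since $L_{h,\tau}^N P_h x$ lies in $V_h$ and $P_h$ is self-adjoint with $P_h|_{V_h} = \mathrm{id}_{V_h}$, one checks that
\begin{equation*}
  \Phi_{h,\tau}^{\mathrm{L}}(x)
  = \langle L_{h,\tau}^N P_h x, P_h x\rangle
  = \langle P_h L_{h,\tau}^N P_h x, x\rangle
  = \langle L_{h,\tau}^N P_h x, x\rangle.
\end{equation*}
Therefore
\begin{equation*}
  \big|\Phi(x) - \Phi_{h,\tau}^{\mathrm{L}}(x)\big|
  = \big|\big\langle (L(T) - L_{h,\tau}^N P_h) x, x\big\rangle\big|
  \leq \|L(T) - L_{h,\tau}^N P_h\|_{\LB(H)}\,\|x\|^2.
\end{equation*}

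Next, I apply Theorem~\ref{thm:conv1} with $\theta = 0$ (and the given $\rho \in (0,\beta)$, noting $\rho + \theta < 1$ is satisfied) to the time step $n = N$, obtaining
\begin{equation*}
  \|L_{h,\tau}^N P_h - L(t_N)\|_{\LB(H)} \leq C\, t_N^{-\rho} h^{2\rho}.
\end{equation*}
Since $N = N_\tau$ satisfies $t_N \in [T, T+\tau]$, and the constraint $\tau \leq ch^2 \leq c$ ensures that $t_N \geq T$, the factor $t_N^{-\rho}$ is bounded by $T^{-\rho}$. If the convention is instead $t_N = T$ exactly (which can be arranged by choosing $\tau$ to divide $T$), this is even more direct. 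Combining the two displays yields the stated bound
\begin{equation*}
  \big|\Phi(x) - \Phi_{h,\tau}^{\mathrm{L}}(x)\big|
  \leq C\,T^{-\rho} h^{2\rho} \|x\|^2.
\end{equation*}

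There is no real obstacle here: all substantive work has been done in Theorems~\ref{thm:Phi} and \ref{thm:conv1}. The only minor point worth being explicit about is the identification $\Phi_{h,\tau}^{\mathrm{L}}(x) = \langle L_{h,\tau}^N P_h x, x\rangle$ via $P_h$-self-adjointness, which allows the error to be read off as a single operator-norm bound on $L(T) - L_{h,\tau}^N P_h$ tested against $x \in H$ (rather than only $P_h x \in V_h$). The assumption $\tau \leq ch^2$ in the hypothesis is inherited directly from Theorem~\ref{thm:conv1} and enters nowhere else.
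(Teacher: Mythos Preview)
Your proof is correct and follows essentially the same approach as the paper: apply Theorem~\ref{thm:Phi} to write $\Phi(x)=\langle L(T)x,x\rangle$, use the self-adjointness of $P_h$ to rewrite $\Phi_{h,\tau}^{\mathrm L}(x)=\langle L_{h,\tau}^{N_\tau}P_hx,x\rangle$, and then bound the difference by $\|L_{h,\tau}^{N_\tau}P_h-L(T)\|_{\LB(H)}\|x\|^2$ via Theorem~\ref{thm:conv1} with $\theta=0$. Your extra remark on the relation between $t_N$ and $T$ is a welcome clarification that the paper leaves implicit.
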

\begin{proof}
	Using Theorems~\ref{thm:Phi} and~\ref{thm:conv1} with $x \in \dot{H}^{0}$ we directly obtain
	\begin{align*}
\big|
\Phi(x) - \Phi_{h,\tau}^\mathrm{L}(x)
\big| &= |\langle (L(T) - L_{h,\tau}^{N_\tau} P_h) x, x \rangle| \\ 
&\le \big\|
	L_{h,\tau}^{N_\tau} P_h-L(T) 
	\big\|_{\LB(H)}
	\|x \|^2 \lesssim T^{-\rho}
	(h^{2\rho} + \tau^\rho) \| x \|^2.\qedhere	
	\end{align*}
\end{proof}

\section{Fully discrete SPDE approximation}
\label{sec:fully_discrete_spde}

In Section~\ref{sec:Lyapunov_equation_and_SPDE} we have shown the connection~\eqref{eq:polarization_identity} between the Lyapunov equation and the SPDE by the analogous equality of the space approximations of the equations. In this section we prove that a similar relation holds in the fully discrete setting up to a sufficiently fast converging error. This connection implies weak convergence of a fully discrete approximation of the SPDE~\eqref{eq:SPDE}.

The fully discrete approximation of~\eqref{eq:SPDE} is obtained by a semiimplicit Euler--Maruyama scheme. Let $(X_{h,\tau})_{h,\tau\in(0,1)}$ be the family of discrete stochastic processes
satisfying $X_{h,\tau}^0 = P_h X_0$ and for all $n\in\{1,\dots,N_\tau\}$ $\P$-a.s
\begin{equation}\label{eq:full_disc_SPDE}
  X_{h,\tau}^n
  +
  \tau A_h X_{h,\tau}^n
  =
  X_{h,\tau}^{n-1} + 
  B(X_{h,\tau}^{n-1}) \Delta W^{n-1},
\end{equation}
where $\Delta W^n = W(t_{n+1}) - W(t_n)$ denotes the increment of the Wiener process. Using the fact that $S_{h,\tau} = (P_h+\tau A_h)^{-1}$ the recursion can be rewritten as
\begin{equation}\label{eq:full_disc_SPDE2}
  X_{h,\tau}^n
  =
  S_{h,\tau}
  X_{h,\tau}^{n-1}
  +
  S_{h,\tau}B(X_{h,\tau}^{n-1}) \Delta W^{n-1},
\end{equation}
which leads to the discrete variation of constants formula
\begin{equation}\label{eq:SPDE_discrete_VC}
  X_{h,\tau}^n
  =
  S_{h,\tau}^n  X_0
  +
  \sum_{j=0}^{n-1}
    S_{h,\tau}^{n-j}
    B(X_{h,\tau}^j)
  \Delta W^j.
\end{equation}

An induction shows that $X_{h,\tau}^n\in L^p(\Omega;H)$ for all $h,\tau\in(0,1)$, $n\in\{1,\dots,N_\tau\}$, $p\in[2,\infty)$ and by a classical Gronwall argument one obtains for all $p \ge 2$ and $0\leq r < \beta$ the existence of a constant~$D_{p,r}$ such that

\begin{equation}\label{eq:moment_full}
\sup_{h,\tau\in(0,1)}
\sup_{n\in\{1,\dots,N_{\tau}\}}
t_n^{\frac{r}{2}}
\|A_h^{\frac{r}2} X_{h,\tau}^n\|_{L^p(\Omega;H)}
\leq
D_{p,r} \|X_0\|.
\end{equation}
We omit the details and refer to \cite[Proposition~3.16]{AnderssonKruseLarsson} for a proof of a similar stability result. Apart from this, we use the following lemma in our weak convergence proof.
\begin{lemma}
	\label{lemma:K_bound}
Let $K \in \cL(H)$. For all $\theta < \beta$, there exists a constant $C>0$ such that for all $n \in \{1, \ldots,N_\tau\}$ and $h \in (0,1]$
	\begin{equation*}
	\big|\E\big[\langle (S_{h,\tau}-P_h) K X_{h,\tau}^n, X_{h,\tau}^n \rangle\big]\big| \le C
	t_n^{-\theta} \tau^\theta \|K\|_{\cL(H)}\| X_0 \|^2.
	\end{equation*}	
\end{lemma}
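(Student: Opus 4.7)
The plan is to decompose $X_{h,\tau}^n$ via the discrete variation-of-constants formula~\eqref{eq:SPDE_discrete_VC} as $X_{h,\tau}^n = S_{h,\tau}^n X_0 + M^n$ with martingale part $M^n := \sum_{j=0}^{n-1} S_{h,\tau}^{n-j} B(X_{h,\tau}^j) \Delta W^j$. Substituting this into the bilinear form and expanding produces four terms; conditioning on $\cF_{t_j}$ and using $\E[\Delta W^j \mid \cF_{t_j}] = 0$ eliminates the two cross terms, so it suffices to estimate
\begin{equation*}
  D_n := \langle (S_{h,\tau}-P_h) K S_{h,\tau}^n X_0, S_{h,\tau}^n X_0 \rangle \quad \text{and} \quad \mathcal{M}_n := \E[\langle (S_{h,\tau}-P_h) K M^n, M^n \rangle]
\end{equation*}
separately.

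For $D_n$, I would use that $(S_{h,\tau}-P_h)$ is self-adjoint (since $A_h$ is self-adjoint and $P_h$ is orthogonal) and commutes with $A_h^r$ by spectral calculus. Rewriting $(S_{h,\tau}-P_h) = A_h^{-r}(S_{h,\tau}-P_h)\, A_h^r$ and applying Cauchy--Schwarz together with $\|S_{h,\tau}^n\|_{\LB(H)} \le 1$ gives
\begin{equation*}
  |D_n| \le \|K\|_{\LB(H)}\, \|X_0\|\, \|A_h^{-r}(S_{h,\tau}-P_h)\|_{\LB(H)}\, \|A_h^{r} S_{h,\tau}^n\|_{\LB(H)}\, \|X_0\|;
\end{equation*}
now~\eqref{eq:S_h_Holder} with $\theta = r$ and~\eqref{eq:S_h_bound} with $\theta = 2r$ (both admissible since $r < \beta \le 1$) yield $|D_n| \lesssim \|K\|_{\LB(H)}\, \tau^r t_n^{-r}\, \|X_0\|^2$.

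For $\mathcal{M}_n$, the It\^o isometry combined with the independence of the Wiener increments gives, after cyclicity of the trace,
\begin{equation*}
  \mathcal{M}_n = \tau \sum_{j=0}^{n-1} \E\bigl[\mathrm{tr}\bigl(K S_{h,\tau}^{n-j} B(X_{h,\tau}^j) B(X_{h,\tau}^j)^* S_{h,\tau}^{n-j}(S_{h,\tau}-P_h)\bigr)\bigr].
\end{equation*}
Bounding $|\mathrm{tr}(K \Theta)| \le \|K\|_{\LB(H)} \|\Theta\|_{\LB_1(H)}$ and then using $\|AB\|_{\LB_1} \le \|A\|_{\LB_2}\|B\|_{\LB_2}$ reduces the task to estimating $\|S_{h,\tau}^{n-j} B(X_{h,\tau}^j)\|_{\LB_2^0}$ and $\|(S_{h,\tau}-P_h) S_{h,\tau}^{n-j} B(X_{h,\tau}^j)\|_{\LB_2^0}$. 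Each is controlled by inserting $A_h^{(1-\beta)/2} A_h^{(\beta-1)/2}$, invoking the uniform bound $\|A_h^{(\beta-1)/2} B\|_{\LB(H,\LB_2^0)} \lesssim b$ (justified as in the proof of Proposition~\ref{thm:spat} via~\eqref{eq:AhA2}), and applying~\eqref{eq:S_h_bound} and~\eqref{eq:S_h_Holder} to the remaining $S_{h,\tau}^{n-j}$ factors. This produces a summand bounded by $\tau^r t_{n-j}^{\beta-r-1} \|X_{h,\tau}^j\|^2$. Invoking the moment bound~\eqref{eq:moment_full} and the Riemann sum estimate $\tau \sum_{j=0}^{n-1} t_{n-j}^{\beta-r-1} \lesssim t_n^{\beta-r} \le T^\beta t_n^{-r}$ (valid exactly when $r < \beta$) yields $|\mathcal{M}_n| \lesssim \|K\|_{\LB(H)}\, \tau^r t_n^{-r}\, \|X_0\|^2$.

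The hard part will be the martingale term: one must simultaneously extract the full rate $\tau^r$ through the commutativity $A_h^r(S_{h,\tau}-P_h) = (S_{h,\tau}-P_h)A_h^r$, accommodate the low spatial regularity of the noise via the $A_h^{(\beta-1)/2}$-factorization of $B$, and ensure that the resulting time singularity $t_{n-j}^{\beta-r-1}$ stays integrable in $j$. The hypothesis $r < \beta$ is precisely what secures this integrability.
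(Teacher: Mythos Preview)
Your proof is correct and follows essentially the same approach as the paper: decompose $X_{h,\tau}^n$ via~\eqref{eq:SPDE_discrete_VC}, use independence of the Wiener increments to drop the cross terms, then extract the factor $\tau^r$ through the commutation $A_h^{-r}(S_{h,\tau}-P_h)A_h^r = S_{h,\tau}-P_h$ combined with~\eqref{eq:S_h_Holder} and~\eqref{eq:S_h_bound}, and handle the low regularity of $B$ via the $A_h^{(\beta-1)/2}$-insertion. The only cosmetic difference is that you phrase the martingale estimate in trace-class language (bounding $|\mathrm{tr}(K\Theta)|\le\|K\|_{\LB(H)}\|\Theta\|_{\LB_1(H)}$) whereas the paper works directly with the $\LB_2^0$-inner product, moving $A_h^r$ across to obtain the factor $\|A_h^{(1-\beta+2r)/2}S_{h,\tau}^{n-j}\|_{\LB(H)}$ on the second slot; the resulting summand $\tau^{1+r}t_{n-j}^{\beta-r-1}$ and the use of $r<\beta$ for summability are identical.
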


\begin{proof}
	We use the It\^o isometry and the fact that the centered increment $\Delta W^{j}$ is independent of $X_{h,\tau}^{j}$ in~\eqref{eq:SPDE_discrete_VC} to find that
	\begin{align*}
	&\E\big[\langle (S_{h,\tau}-P_h)K X_{h,\tau}^n, X_{h,\tau}^n \rangle\big] \\ 
	&\qquad= 
	\E\big[\langle  (S_{h,\tau}-P_h)K S_{h,\tau}^n X_0, S_{h,\tau}^n X_0 \rangle\big] \\
	&\qquad\quad+
	\tau
	\sum_{j=0}^{n-1}
	\E\big[\langle  (S_{h,\tau}-P_h)K
	S_{h,\tau}^{n-j}
	B(X_{h,\tau}^j),S_{h,\tau}^{n-j}
	B(X_{h,\tau}^j) \rangle_{\cL^0_2} \big].
	\end{align*}
In a first step, this gives that
	\begin{align*}
	&\big|\E\big[\langle (S_{h,\tau}-P_h)K X_{h,\tau}^n, X_{h,\tau}^n \rangle\big]\big| \\ 
	&\quad\le
	\big| \langle A_h^{-\theta} (S_{h,\tau}-P_h) K S_{h,\tau}^n X_0, A_h^{\theta} S_{h,\tau}^n X_0 \rangle\big| \\
	&\quad\quad+
	\tau \sum_{j=0}^{n-1}
	\big|\E\big[\langle  A_h^{-\theta} (S_{h,\tau}-P_h)K
	A_h^{\frac{1-\beta}{2}} S_{h,\tau}^{n-j} A_h^{\frac{\beta-1}{2}}
	B(X_{h,\tau}^j), \\
	&\quad\hspace{2.25cm}A_h^{\frac{1-\beta+2\theta}{2}} S_{h,\tau}^{n-j} A_h^{\frac{\beta-1}{2}}
	B(X_{h,\tau}^j) \rangle_{\cL^0_2} \big]\big| \\
	&\quad\le  \|A_h^{-\theta} (S_{h,\tau}-P_h)\|_{\cL(H)} \|K\|_{\cL(H)}\\
	&\quad\quad\times
	\Big( \|S_{h,\tau}^n\|_{\cL(H)} \|A_h^{\theta} S_{h,\tau}^n\|_{\cL(H)} \|X_0\|^2 \\
	&\quad\quad\quad\quad+
	 \tau \sum_{j=0}^{n-1}
	 \|A_h^{\frac{1-\beta}{2}} S_{h,\tau}^{n-j}\|_{\cL(H)} \|A_h^{\frac{\beta-1}{2}}B\|_{\cL_2^0}^2 \|A_h^{\frac{1-\beta+2\theta}{2}} S_{h,\tau}^{n-j}\|_{\cL(H)}
	 \E\big[\|X_{h,\tau}^n\|^2\big] \Big).
	\end{align*} 
	Using \eqref{eq:S_h_Holder}, \eqref{eq:S_h_bound}, \eqref{eq:AhA2} and \eqref{eq:moment_full} completes the proof.
\end{proof}
 
We are now in place to state the fully discrete version of~\eqref{eq:polarization_identity}, which implies weak convergence stated in Corollary~\ref{cor:weak_full} below. We set
\begin{equation}\label{eq:Phi_full}
\Phi_{h,\tau}(x,n)
=
\E
\Bigg[
\big\|
G X_{h,\tau}^n
\big\|^2
+
\tau
\sum_{k=1}^{n}
\big\|
R X_{h,\tau}^{n-k}
\big\|^2
\,\Big|\,
X_{h,\tau}^{0} = P_h x
\Bigg]
\end{equation}
for $n=0,\ldots,N_\tau$.

\begin{theorem}\label{thm:disc_rep}
	Let $\Phi_{h,\tau}$ be the functional given by \eqref{eq:Phi_full} and let $L^n_{h,\tau}$ be given by~\eqref{eq:Lyapunov_discrete_VC1}. For all $c>0$ and $\rho\in(0,\beta)$, there exists a constant $C>0$ satisfying for $h\in(0,1)$, $\tau\leq ch^{2 (1 - \beta)/(1-\rho)}$, $n\in\{1,\dots,N_\tau\}$ and $x \in H$
	\begin{equation*}
	\big| \big\langle
	L_{h,\tau}^n P_h x , P_h x
	\big\rangle - \Phi_{h,\tau}(x,n)
	\big|
	\leq
	C
	\tau^\rho
	\|x\|^2.
	\end{equation*}
\end{theorem}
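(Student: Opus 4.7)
The plan is to mimic the semidiscrete argument of Lemma~\ref{thm:PhiSemidiscr}, replacing It\^o's formula with a discrete telescoping that introduces quantifiable time-stepping errors controlled by the regularity and smoothing estimates already at our disposal. Set $M_j:=\E[\langle L_{h,\tau}^{n-j} X_{h,\tau}^j,X_{h,\tau}^j\rangle]$ for $j\in\{0,\ldots,n\}$, so that $M_0 = \langle L_{h,\tau}^n P_h x,P_h x\rangle$ and $M_n=\E[\|GX_{h,\tau}^n\|^2]$. Using the Lyapunov recursion~\eqref{eq:Lyapunov_one_step} for $L_{h,\tau}^{n-j}$ and the one-step formula~\eqref{eq:full_disc_SPDE2} together with the It\^o isometry for the conditional expectation of $\langle L_{h,\tau}^{n-j-1} X_{h,\tau}^{j+1},X_{h,\tau}^{j+1}\rangle$ given $\mathcal F_{t_j}$, I would establish
\[
M_j-M_{j+1}=\tau\,\E[\|R S_{h,\tau} X_{h,\tau}^j\|^2]+\tau\,\E[D_j],
\]
with the Lyapunov--It\^o mismatch
\[
D_j:=\langle L_{h,\tau}^{n-j-1}P_hB(S_{h,\tau}X_{h,\tau}^j),B(S_{h,\tau}X_{h,\tau}^j)\rangle_{\LB_2^0}-\langle L_{h,\tau}^{n-j-1}S_{h,\tau}B(X_{h,\tau}^j),S_{h,\tau}B(X_{h,\tau}^j)\rangle_{\LB_2^0}.
\]
Telescoping and subtracting $\tau\sum_{j=0}^{n-1}\E[\|RX_{h,\tau}^j\|^2]$ gives
\[
\langle L_{h,\tau}^nP_hx,P_hx\rangle-\Phi_{h,\tau}(x,n)=\tau\sum_{j=0}^{n-1}\E[\|RS_{h,\tau}X_{h,\tau}^j\|^2-\|RX_{h,\tau}^j\|^2]+\tau\sum_{j=0}^{n-1}\E[D_j],
\]
and the task reduces to bounding each of these two sums by $t_n^{-\rho}h^{2\rho}\|x\|^2$.

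For the first sum, since $X_{h,\tau}^j=P_h X_{h,\tau}^j$, I would rewrite its $j$-th summand as $\E[\langle(S_{h,\tau}-P_h)R^*R(S_{h,\tau}+P_h)X_{h,\tau}^j,X_{h,\tau}^j\rangle]$, which invites Lemma~\ref{lemma:K_bound} with $K:=R^*R(S_{h,\tau}+P_h)$ (bounded by $2r^2$) and exponent $\rho$. Summing gives $\tau^{1+\rho}\sum_{j=1}^{n-1}t_j^{-\rho}\lesssim \tau^\rho t_n^{1-\rho}\lesssim t_n^{-\rho}h^{2\rho}\|x\|^2$ after $\tau\le ch^2$; the term $j=0$ contributes an additional deterministic $O(\tau)\lesssim h^{2\rho}\|x\|^2$.

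The core of the proof is the second sum. Writing $S_{h,\tau}X_{h,\tau}^j=X_{h,\tau}^j+(S_{h,\tau}-P_h)X_{h,\tau}^j$ and $S_{h,\tau}B(X_{h,\tau}^j)=P_hB(X_{h,\tau}^j)+(S_{h,\tau}-P_h)B(X_{h,\tau}^j)$, and expanding both $\LB_2^0$ inner products in $D_j$, the diagonal contributions of the form $\langle L_{h,\tau}^{n-j-1}P_hBX_{h,\tau}^j,P_hBX_{h,\tau}^j\rangle_{\LB_2^0}$ cancel, leaving four error pieces that each carry one factor $S_{h,\tau}-P_h$. For the two in which $S_{h,\tau}-P_h$ can be shifted, by self-adjointness, next to $X_{h,\tau}^j$ (for example $\langle(S_{h,\tau}-P_h)B^*L_{h,\tau}^{n-j-1}P_hBS_{h,\tau}X_{h,\tau}^j,X_{h,\tau}^j\rangle$), I would invoke Lemma~\ref{lemma:K_bound} with the deterministic kernel bounded via~\eqref{eq:B_Y_bound} and Theorem~\ref{thm:apriori1} by $\|B^*L_{h,\tau}^{n-j-1}P_hBS_{h,\tau}\|_{\LB(H)}\lesssim t_{n-j-1}^{\beta-1}$, obtaining an $O(t_j^{-\rho}\tau^\rho t_{n-j-1}^{\beta-1})$ bound per index. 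For the other two, where $S_{h,\tau}-P_h$ is trapped between an $L_{h,\tau}^{n-j-1}$-factor and a $B$-factor, Cauchy--Schwarz in $\LB_2^0$, commuting $A_h^{(1-\beta)/2}$ past $S_{h,\tau}-P_h$ on $V_h$, followed by $\|(S_{h,\tau}-P_h)A_h^{-\rho}\|_{\LB(H)}\lesssim \tau^\rho$ from~\eqref{eq:S_h_Holder} and the Lyapunov regularity $\|A_h^{(1-\beta+2\rho)/2}L_{h,\tau}^{n-j-1}A_h^{(1-\beta)/2}\|_{\LB(H)}\lesssim t_{n-j-1}^{\beta-1-\rho}$ from Theorem~\ref{thm:apriori1} (the constraint $\rho<\beta$ is exactly what keeps the exponent below $2$), yields an $O(\tau^\rho t_{n-j-1}^{\beta-1-\rho})$ bound. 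Both types of $j$-sum are discrete convolutions of order $\tau^\rho t_n^{\beta-\rho}$, which is $\lesssim t_n^{-\rho}h^{2\rho}\|x\|^2$ after $\tau\le ch^2$ and $t_n\le T$.

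The main obstacle will be this bookkeeping for the four error pieces: the commutator $P_hB(S_{h,\tau}-P_h)X-(S_{h,\tau}-P_h)BX$ never vanishes, so the available smoothing (positive powers of $A_h$ absorbed by $L_{h,\tau}$ via Theorem~\ref{thm:apriori1}, the negative power $A_h^{(\beta-1)/2}$ absorbed by $B$, and $\tau^\rho$ extracted from $S_{h,\tau}-P_h$) must be distributed just carefully enough that the surviving $t_{n-j-1}$-singularity remains integrable against the $t_j$-singularity from Lemma~\ref{lemma:K_bound} or the moment bound~\eqref{eq:moment_full}. The boundary indices $j=n-1$ (where $L_{h,\tau}^0=P_hG^*GP_h$ carries no $t_{n-j-1}$-decay but only the crude inverse estimate $\|A_h^{(1-\beta)/2}L_{h,\tau}^0A_h^{(1-\beta)/2}\|_{\LB(H)}\lesssim g^2 h^{2\beta-2}$ from Assumption~\ref{ass:Ah}\ref{eq:inv_ineq}) and $j=0$ (where $X_{h,\tau}^0=P_hx$ is deterministic and no moment bound is invoked) must be treated separately; in both cases the extra factor $\tau\le ch^2$ in front absorbs the deficit and produces a contribution of order $h^{2\rho}\|x\|^2$ or better.
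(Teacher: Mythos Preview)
Your proposal is correct and follows essentially the same route as the paper's proof: the paper also telescopes $\langle L_{h,\tau}^k X_{h,\tau}^{n-k},X_{h,\tau}^{n-k}\rangle$, uses~\eqref{eq:Lyapunov_one_step} together with~\eqref{eq:full_disc_SPDE2} and the It\^o isometry to produce the same $R$-mismatch $\langle R(S_{h,\tau}+P_h)X,R(S_{h,\tau}-P_h)X\rangle$ and $B$-mismatch $\langle(S_{h,\tau}B^*L_{h,\tau}^{k-1}P_hBS_{h,\tau}-B^*S_{h,\tau}L_{h,\tau}^{k-1}S_{h,\tau}B)X,X\rangle$, handles the former and the ``shiftable'' half of the latter by Lemma~\ref{lemma:K_bound}, the ``trapped'' half by inserting $A_h^{-\rho}(S_{h,\tau}-P_h)$ and invoking Theorem~\ref{thm:apriori1} with $\theta_1=1-\beta+2\rho$, $\theta_2=1-\beta$, and treats the boundary indices $k=1$ and $k=n$ separately via Assumption~\ref{ass:Ah}\ref{eq:inv_ineq} and the coupling $\tau\le ch^2$, exactly as you outline. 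The only cosmetic difference is that the paper packages the four linear error pieces into two symmetric combinations $J_k^{n,1}$ and $J_k^{n,2}$ rather than listing them individually; one small point of care in your write-up is that the $A_h^{(1-\beta)/2}$-insertion must precede the Cauchy--Schwarz step (since $\|P_hBX\|_{\LB_2^0}$ alone is not controlled), which the paper avoids by bounding the full operator norm directly.
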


\begin{proof}
	By a telescoping sum argument and the fact that $L_{h,\tau}^0 = P_hG^*GP_h$ we obtain
	\begin{align*}
	&\big\langle
	L_{h,\tau}^n P_h x , P_h x
	\big\rangle\\  
	&\qquad
	=
	\big\langle
	L_{h,\tau}^0 X_{h,\tau}^n , X_{h,\tau}^n
	\big\rangle
	+
	\sum_{k=1}^{n} 
	\big\langle
	L_{h,\tau}^{k} X_{h,\tau}^{n-k},X_{h,\tau}^{n-k}
	\big\rangle
	-
	\big\langle
	L_{h,\tau}^{k-1} X_{h,\tau}^{n-k+1},X_{h,\tau}^{n-k+1}
	\big\rangle
	\\  
	&\qquad
	=
	\E
	\Bigg[
	\big\|
	GX_{h,\tau}^n
	\big\|^2
	+
	\sum_{k=1}^{n}
	\big\langle
	L_{h,\tau}^{k} X_{h,\tau}^{n-k},X_{h,\tau}^{n-k}
	\big\rangle
	-
	\big\langle
	L_{h,\tau}^{k-1} X_{h,\tau}^{n-k+1},X_{h,\tau}^{n-k+1}
	\big\rangle
	\Bigg]
	\end{align*}
	so that 
	\begin{align}\allowdisplaybreaks
	\label{eq:weak_proof_term_1}
		\begin{split}
		&\big\langle
		L_{h,\tau}^n P_h x , P_h x
		\big\rangle - \Phi_{h,\tau}(x,n) \\ 
		&\quad= \sum_{k=1}^{n} \E
		\big[ 	
		\big\langle
		L_{h,\tau}^{k} X_{h,\tau}^{n-k},X_{h,\tau}^{n-k}
		\big\rangle
		-
		\big\langle
		L_{h,\tau}^{k-1} X_{h,\tau}^{n-k+1},X_{h,\tau}^{n-k+1}
		\big\rangle
		-\tau \big\|
		RX_{h,\tau}^{n-k} \big\|^2 \big]\\
	&\quad=  \sum_{k=1}^{n}\tau\E
\big[
\big\langle \big(S_{h,\tau} B^*  L_{h,\tau}^{k-1} P_h B S_{h,\tau} - B^* S_{h,\tau} L_{h,\tau}^{k-1} S_{h,\tau} B \big)X_{h,\tau}^{n-k}, X_{h,\tau}^{n-k} \big\rangle \big] \\
&\hspace*{10em}+
\tau \E
\big[\big\langle R (P_h+S_{h,\tau})  X_{h,\tau}^{n-k}, R (S_{h,\tau}-P_h)  X_{h,\tau}^{n-k} \big\rangle\big]\\
		&\quad=:\sum_{k=1}^{n} I^{n,1}_k +I^{n,2}_k.
		\end{split}
	\end{align}
The second equality follows from the identity
	\begin{align*}
	&\E\big[ 
		\big\langle
		L_{h,\tau}^{k-1} X_{h,\tau}^{n-k+1},X_{h,\tau}^{n-k+1}
		\big\rangle \big] \\
		&\quad= \E\big[ 	
		\big\langle S_{h,\tau} L_{h,\tau}^{k-1} S_{h,\tau} X_{h,\tau}^{n-k}, X_{h,\tau}^{n-k} \big\rangle \big] + \tau \E\big[\big\langle 
		B^* S_{h,\tau}L_{h,\tau}^{k-1} S_{h,\tau} B X_{h,\tau}^{n-k},
		 X_{h,\tau}^{n-k}
		\big\rangle \big],
	\end{align*}
obtained by~\eqref{eq:full_disc_SPDE}, the It\^o isometry and the independence of $\Delta W^{n-k}$ and $X_{h,\tau}^{n-k}$, along with \eqref{eq:Lyapunov_one_step} and that $\| u\|^2 - \|v\|^2 = \langle u+v,u-v\rangle$.
	
For the first term $I^{n,1}_k$, since $L_{h,\tau}^0=P_hG^*GP_h$, by the triangle inequality, \eqref{eq:S_h_bound}, \eqref{eq:AhA2},  Assumption~\ref{ass:Ah}\ref{eq:inv_ineq} and \eqref{eq:moment_full} we obtain for $k=1$ 
	\begin{align*}
		|I^{n,1}_1| &= \tau\big|\E
		\big[
		\big\langle \big( S_{h,\tau} B^* A_h^{\frac{\beta-1}{2}}  A_h^{\frac{1-\beta}{2}}   G^*G A_h^{\frac{1-\beta}{2}} A_h^{\frac{\beta-1}{2}}B S_{h,\tau} \\
		&\hspace{1.5cm}-  B^* A_h^{\frac{\beta-1}{2}} A_h^{\frac{1-\beta}{2}} S_{h,\tau} G^*G S_{h,\tau} A_h^{\frac{1-\beta}{2}} A_h^{\frac{\beta-1}{2}} B \big)X_{h,\tau}^{n-1}, X_{h,\tau}^{n-1} \big\rangle \big]\big| \\
		&\le 2 \tau g^2  \|S_{h,\tau}\|^2 \|A_h^{\frac{\beta-1}{2}} B\|^2  \|A_h^{\frac{1-\beta}{2}}\|^2 \|X_{h,\tau}^{n-1}\|^2 \\
		&\le 2 \tau h^{2(\beta -1)} g^2 D_0^2 D_{\beta-1}^2 b^2 D_{1-\beta}^2   D^2_{2,0} \| x \|^2 
		\lesssim \tau h^{2(\beta -1)} \| x \|^ 2.
	\end{align*}
	In the case that $k = 2, \ldots, n$, we use the fact that $L_{h,\tau}^{k-1} \in \Sigma(V_h)$ to obtain the split
	\begin{align*}
		I^{n,1}_k &= 
		\tau \E \big[ \big\langle (S_{h,\tau} - P_h ) B^* L_{h,\tau}^{k-1} P_h  B (S_{h,\tau} + P_h) X_{h,\tau}^{n-k}, X_{h,\tau}^{n-k} \big\rangle \big] \\
		 &\quad- 
		 \tau \E \big[ \big\langle B^*(S_{h,\tau} - P_h )  L_{h,\tau}^{k-1}  (S_{h,\tau} + P_h) B X_{h,\tau}^{n-k}, X_{h,\tau}^{n-k} \big\rangle \big] =: J^{n,1}_k - J^{n,2}_k.
	\end{align*}
	
	The term $|J^{n,1}_k|$ is for $k \neq n$ handled by Lemma~\ref{lemma:K_bound} with
	\begin{equation*}
		K =  B^* L_{h,\tau}^{k-1} B (S_{h,\tau} + P_h)  
	\end{equation*}
	which is by~\eqref{eq:AhA2}, Theorem~\ref{thm:apriori1} and~\eqref{eq:S_h_bound} bounded by
	\begin{align*}
	\| K \|_{\cL(H)} 
	&\le  D_{\beta-1}^2 b^2 D^L_{1-\beta,1-\beta} t_{k-1}^{\beta-1} (1 + D_0),
	\end{align*}
	where $D^L_{\theta_1,\theta_2}$, $\theta_1, \theta_2 \in [0,2)$, denotes the constant~$C$ in~Theorem~\ref{thm:apriori1}. For $k=n$ we note that~\eqref{eq:S_h_Holder} implies 
	\begin{equation*}
	\label{eq:K_bound_var}
	\big|\langle (S_{h,\tau}-P_h) K x, x \rangle\big| \le \|S_{h,\tau}-P_h\|_{\cL(H)} \|K\|_{\cL(H)} \|x\|^2 \le D_0 \|K\|_{\cL(H)}\| x \|^2.
	\end{equation*}
	The term $|I^{n,2}_k|$ is treated similarly to $|J^{n,1}_k|$, using for $K=R^* R (P_h+S_{h,\tau})$ the bound
	$
	\| K \|_{\cL(H)} \le r^2 (1 + \|S_{h,\tau}\|_{\cL(H)}) \le r^2 (1 + D_0)
	$
	in Lemma~\ref{lemma:K_bound} and~\eqref{eq:K_bound_var}. We obtain that $|J^{n,1}_k|$ is bounded by a constant times $\tau^{1+\rho} t_{k-1}^{\beta-1} t_{n-k}^{-\rho} \| x \|^ 2$ in the case that $k \neq n$, and a constant times $\tau t_{n-1}^{\beta-1}  \| x \|^ 2$ for $k = n$. Similarly, $|I^{n,2}_k|$ is bounded by a constant times $\tau^{1+\rho} t_{n-k}^{-\rho} \| x \|^ 2$ and $\tau \| x \|^ 2$ for $k \neq n$ and $k=n$, respectively.
	
	For the term $|J^{n,2}_k|$, with $k \neq n$, we obtain from \eqref{eq:AhA2}, \eqref{eq:S_h_Holder}, Theorem~\ref{thm:apriori1}, \eqref{eq:S_h_bound} and \eqref{eq:moment_full} that
	\begin{align*}
		|J^{n,2}_k| 
		&\le \tau^{1+\rho} t_{k-1}^{\beta - \rho - 1} D_{\beta-1}^2 b^2 D_{\rho} D^L_{1-\beta+2\rho,1-\beta} (D_0 + 1) D^2_{2,0}\|x\|^2 \lesssim \tau^{1+\rho} t_{k-1}^{\beta - \rho - 1} \| x \|^ 2.
	\end{align*}
	For $k = n$ one obtains the same bound without the term $D^2_{2,0}$ since \eqref{eq:moment_full} is not used.  
	Collecting the estimates, we bound \eqref{eq:weak_proof_term_1} for $n > 1$ by 
	\begin{align*}
		\big|\big\langle L_{h,\tau}^n P_h x , P_h x \big\rangle - \Phi_{h,\tau}(x,n)\big| 
		&\lesssim \Bigg( \tau h^{2(\beta-1)} + \tau^{1+\rho} t_{n-1}^{-\rho}  + \tau t_{n-1}^{\beta-1} + \tau^{1+\rho} t_{n-1}^{\beta - \rho - 1} + \tau \\
		&\hspace{10mm} + \tau^{1+\rho} \sum_{k=2}^{n-1} (t_{k-1}^{\beta-1} t_{n-k}^{-\rho} + t_{k-1}^{\beta - \rho - 1} + t_{n-k}^{-\rho}) \Bigg) \| x \|^ 2. 
	\end{align*}
	We note that $\tau\leq c h^{2(1-\beta)/(1-\rho)}$ yields $\tau h^{2 (\beta -1)} = \tau^\rho \tau^{1-\rho} h^{2 (\beta -1)} \leq c^{1-\rho} \tau^\rho$. Moreover, the identity~\eqref{eq:beta_integral} implies
	\begin{equation*}
		\tau \sum_{k=2}^{n-1} (t_{k-1}^{\beta-1} t_{n-k}^{-\rho}+ t_{k-1}^{\beta - \rho - 1} + t_{n-k}^{-\rho}) \lesssim t_{n}^{\beta-\rho}  + t_{n}^{1-\rho} \lesssim T^{\beta-\rho} + T^{1-\rho} .
	\end{equation*}
	 These facts, along with the bounds $\tau^{1+\rho} t_{n-1}^{-\rho} \le \tau$,  $\tau t_{n-1}^{\beta-1} = \tau^\beta \tau^{1-\beta} t_{n-1}^{\beta-1} \le \tau^\beta$ and  $\tau^{1+\rho} t_{n-1}^{\beta-\rho-1} = \tau^\beta \tau^{1+\rho-\beta} t_{n-1}^{\beta-\rho-1} \le \tau^\beta$, yield
	\begin{equation*}
		\big|\big\langle L_{h,\tau}^n P_h x , P_h x \big\rangle - \Phi_{h,\tau}(x,n)\big| \lesssim \tau^\rho \| x \|^ 2.
	\end{equation*}
	This shows the claim for $n>1$. The case $n=1$ is treated similarly. 
\end{proof}

We now obtain our weak convergence result as a direct consequence of Theorem~\ref{thm:disc_rep} and Corollary~\ref{cor:Phi_bound_Lyap}. We write $\Phi_{h,\tau}(x)= \Phi_{h,\tau}(x,N_\tau)$ for $x \in H$.

\begin{corollary}\label{cor:weak_full}
	Let $\Phi$ and $\Phi_{h,\tau}$ be the functionals given by~\eqref{eq:Phi} and~\eqref{eq:Phi_full}, respectively. For all $c>0$ and $\rho\in(0,\beta)$, there exists a constant $C>0$ satisfying for $h\in(0,1)$, $\tau\leq ch^{2 (1 - \beta)/(1-\rho)}$ and $x \in H$
	\begin{equation*}
		\big|\Phi(x)-\Phi_{h,\tau}(x)\big| \le C T^{-\rho} (h^{2\rho} + \tau^\rho)  \|x\|^2.
	\end{equation*}
\end{corollary}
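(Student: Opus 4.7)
The plan is to apply the triangle inequality, inserting the intermediate quantity $\Phi_{h,\tau}^{\mathrm{L}}(x) = \langle L_{h,\tau}^{N_\tau} P_h x, P_h x\rangle$ from Corollary~\ref{cor:Phi_bound_Lyap}, so that
\begin{equation*}
  \big|\Phi(x)-\Phi_{h,\tau}(x)\big|
  \le
  \big|\Phi(x) - \Phi_{h,\tau}^{\mathrm{L}}(x)\big|
  +
  \big|\Phi_{h,\tau}^{\mathrm{L}}(x) - \Phi_{h,\tau}(x)\big|.
\end{equation*}

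For the first term, Corollary~\ref{cor:Phi_bound_Lyap}, which gives the a priori error of the fully discrete Lyapunov approximation to the continuous functional $\Phi$, directly yields a bound of order $T^{-\rho}h^{2\rho}\|x\|^2$ under the stepsize coupling $\tau \le c h^2$.

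For the second term, I would invoke Theorem~\ref{thm:disc_rep} with $n=N_\tau$, so that $t_n = T$. The theorem states exactly that
\begin{equation*}
  \big|\langle L_{h,\tau}^{N_\tau} P_h x, P_h x \rangle - \Phi_{h,\tau}(x,N_\tau)\big|
  \le
  C\, T^{-\rho} h^{2\rho}\|x\|^2,
\end{equation*}
and since by definition $\Phi_{h,\tau}(x) = \Phi_{h,\tau}(x,N_\tau)$ and $\Phi_{h,\tau}^{\mathrm{L}}(x) = \langle L_{h,\tau}^{N_\tau}P_h x, P_h x\rangle$, this is exactly the required estimate.

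Combining the two bounds and redefining the generic constant $C$ to be the sum of the two constants produced above concludes the proof. There is no real obstacle here: the work has already been done in Theorem~\ref{thm:disc_rep} (which controls the gap between the Lyapunov-based approximation of $\Phi$ and the Monte Carlo functional $\Phi_{h,\tau}$ coming from the discrete SPDE) and in Corollary~\ref{cor:Phi_bound_Lyap} (which controls the gap between the Lyapunov-based approximation and the true $\Phi$). The statement is essentially a one-line consequence via the triangle inequality.
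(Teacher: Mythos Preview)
Your proposal is correct and matches the paper's approach exactly: the corollary is stated as a direct consequence of Theorem~\ref{thm:disc_rep} and Corollary~\ref{cor:Phi_bound_Lyap}, combined via the triangle inequality with $\Phi_{h,\tau}^{\mathrm{L}}(x)$ as the intermediate quantity.
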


We conclude this section by relating our approach to prove weak convergence to the most common in the literature. That approach is based a joint use of the It\^o formula and the solution to a Kolmogorov equation, see references in the introduction. For additive noise the solution to the Kolmogorov equation is regular enough to show weak convergence rates. 
For multiplicative noise the solution is less regular and a straight forward generalization of the methodology for additive noise to multiplicative noise leads to suboptimal rates for $\beta\in[1/2,1]$ with finite element approximations, see \cite{AnderssonLarsson2016}. This has been solved in \cite{BrehierDebussche2017,Kurniawan2016} but restricts to spectral methods. In our setting the Kolmogorov equation is solved by the quadratic form of the solution $L$ of the Lyapunov equation with $R=0$, see Theorem~\ref{thm:mild_is_weak}. By Theorem~\ref{thm:Lyapunov} it has the same regularity as for additive noise. Therefore, a weak convergence proof with desired convergence rates could be carried out by adapting the method of \cite{debussche2011}. Our approach is advantageous since it has no regularity assumption on the initial condition as in~\cite{AnderssonKovacsLarsson} and can treat path-dependent functionals, where we are only aware of \cite{AnderssonKovacsLarsson,BHS18} for SPDEs.

\section{Numerical implementation and simulation}
\label{sec:numerics}

The goal of this section is to show how the numerical approximations of Sections~\ref{sec:fully_discrete_Lyapunov} and~\ref{sec:fully_discrete_spde} are implemented in practice. We demonstrate our theoretical results by numerical simulations in the specific setting of Example~\ref{ex:heat_equation}. Solving the Lyapunov equation is then compared to the Monte Carlo method, both by an empirical stability analysis as well as a theoretical computational complexity discussion.

\subsection{Implementation and convergence analysis}
\label{subsec:implementation-simulation-lyapunov}

\subsubsection{Implementation of the fully discrete Lyapunov approximation}

First, we describe how the fully discrete approximation $L^{N_\tau}_{h,\tau}$ from~\eqref{eq:Lyapunov_one_step_A} of the solution $L(T)$ to the Lyapunov equation~\eqref{eq:Lyapunov_var} is implemented numerically. Let $N_h$ denote the dimension of $V_h$ and let $(\phi_h^i)_{i=1}^{N_h}$ be a basis of $V_h$. By $\bM_h, \bA_h, \bG_h$ and $\bR_h \in \R^{N_h \times N_h}$ we denote the matrices with entries given by $(\bM_h)_{i,j} = \langle \phi_h^i, \phi_h^j \rangle$, $(\bA_h)_{i,j} = a(\phi_h^i,\phi_h^j) = \langle A_h^{1/2}\phi_h^i, A_h^{1/2}\phi_h^j \rangle$, $(\bG_h)_{i,j} = \langle G\phi_h^i, G\phi_h^j \rangle$, and $(\bR_h)_{i,j} = \langle R\phi_h^i, R\phi_h^j \rangle$, $i,j \in \{1, 2, \ldots, N_h\}$, respectively. 
For $n \in \{0, \ldots N_\tau\}$, let $\bL^n_{h,\tau}$ be the matrix containing the coefficients in the expansion
$L_{h,\tau}^n=\sum_{i,j=1}^{N_h}(\bL^n_{h,\tau})_{i,j}\phi_h^i\otimes \phi_h^j$ of the approximation given by~\eqref{eq:Lyapunov_one_step_A}. Here $\phi_h^i \otimes \phi_h^j$ is given by $(\phi_h^i \otimes \phi_h^j) \psi_h = \langle \phi_h^j,\psi_h \rangle \phi_h^i$ for all $\psi_h \in V_h$. By $(\bB_h(\bL_{h,\tau}^{n-1}))$ we denote the matrix with entries $(\bB_h(\bL_{h,\tau}^{n-1}))_{i,j}$ given by 
\begin{equation*}
\langle
L_{h,\tau}^{n-1} P_h B(\phi_h^i), B(\phi_h^j)
\rangle_{\LB_2^0}=
\sum_{k,\ell=1}^{N_h}
(\bL^{n-1}_{h,\tau})_{k,\ell}
\langle
B(\phi_h^j) (B(\phi_h^i))^* \phi_h^\ell, \phi_{h}^k
\rangle,
\end{equation*}
where the adjoint of $B(\phi_h^i)$ is taken with respect to $\cL(U,H)$.

The matrix version of~\eqref{eq:Lyapunov_one_step_A} is now given by \begin{equation}
\label{eq:matrix_sylvester_prel}
(\bM_h + \tau \bA_h) \bL_{h,\tau}^n (\bM_h + \tau \bA_h)
=
\bM_h \bL_{h,\tau}^{n-1} \bM_h 
+ \tau \bR_h 
+\tau \bB_h(\bL_{h,\tau}^{n-1})
\end{equation}
with initial value $\bL_{h,\tau}^0 = \bM_h^{-1}\bG_h \bM_h^{-1}$. With this in place, one approximates $\Phi(x)$, $x \in H$, by $
\Phi_{h,\tau}^\mathrm{L}(x) =  \bx_h^\top \bM_h \bL_{h,\tau}^{N_\tau} \bM_h \bx_h$.  
Here, $\bx_h^\top$ denotes the transpose of the vector $\bx_h$ of coefficients $x_h^j$ in the expansion $P_h x = \sum_{j = 1}^{N_h} x_h^j \phi_h^j$.

\subsubsection{Empirical convergence analysis}
\label{subsec:empirical-convergence}

Next, we illustrate, by numerical simulations, our theoretical results (specifically Corollary~\ref{cor:Phi_bound_Lyap} and Corollary~\ref{cor:weak_full}). The same setting as in Example~\ref{ex:heat_equation} is considered, where we recall that $A = - \Delta$. We choose $U=H$, so that the equation is driven by space-time white noise, and assume that $B$ is a linear Nemytskij operator (i.e., that $(B(u)v)(\chi) = u(\chi)v(\chi)$ for all $u,v \in H = L^2(D)$ and almost every $\chi \in D \subset \R^d$). Then $b = \norm[\LB(H,\LB(H,\LB_2^0))]{A^{(\beta-1)/2}B} < \infty$ for all $\beta < 1/2$ in $d=1$ and for no positive $\beta$ when $d>1$. We further specify $D = (0,1)$, $R=0$, $G = \Id$, $T = 1$ and rescale $A$ and $B$ by factors $\lambda = 0.05$ and $\sigma = 0.65$, respectively. We choose an initial value $X_0(\chi) = \chi\mathbf{1}_{[(0,1/2)}(\chi) + (1-\chi)\mathbf{1}_{[1/2,1)}(\chi)$ for $\chi \in D$ and compute $\Phi^{\mathrm{L}}_{h,\tau}(X_0)$ and $\Phi_{h,\tau}(X_0)$ for $\tau = h^2$ and $h = 2^{-1}, 2^{-2}, \ldots, 2^{-8}$. The latter quantity is computed in a deterministic way by tensorizing the matrix equation system corresponding to~\eqref{eq:full_disc_SPDE2} and applying the expectation on both sides. We refer to \cite{P17} for computational details on this tensor product approach. This approach was employed for a reference solution instead of a Monte Carlo method due to the stability problems of the latter, see Section~\ref{sec:empirical-stability-analysis}. The errors $|\Phi^{\mathrm{L}}_{h,\tau}(X_0) - \Phi(X_0)|$ and $|\Phi_{h,\tau}(X_0) - \Phi(X_0)|$ are shown in Figure~\ref{fig:errors}.
\begin{figure}[ht!]
	\centering
	\includegraphics[width = 0.75\textwidth]{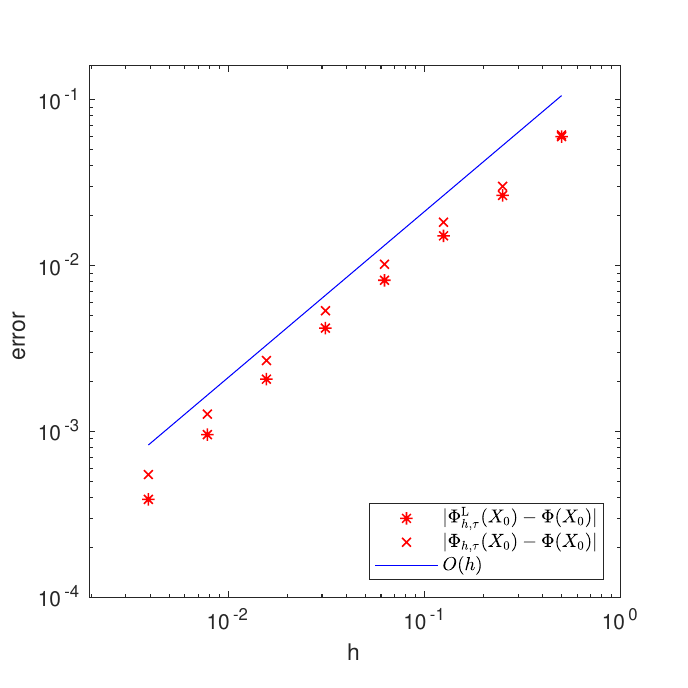}
	\caption{Estimates of the errors $|\Phi^{\mathrm{L}}(X_0) - \Phi^T(X_0)|$ and $|\Phi_{h,\tau}(X_0) - \Phi(X_0)|$ for a fixed $X_0 \in H$.}
	\label{fig:errors}
\end{figure} 
Here we have replaced $\Phi(X_0)$ with the reference solution $\Phi_{h,\tau}(X_0)$ computed with $h = 2^{-10}$ and $\tau = h^2$. All matrices and vectors are computed exactly, which is possible by our choice of $U$, except for the initial value, which is interpolated onto the finite element space. Since $b < \infty$ for all $\beta < 1/2$, we expect from Corollary~\ref{cor:Phi_bound_Lyap} and Corollary~\ref{cor:weak_full} a convergence rate of essentially $\cO(h)$. The results of Figure~\ref{fig:errors} are consistent with this expectation. 
%The computation was made in MATLAB\textsuperscript{\textregistered} R2018b on a dual-core Intel\textsuperscript{\textregistered} Core\textsuperscript{TM} i7-5600U 2.60GHz CPU. 

\subsection{Comparison to the Monte Carlo method}
The most straightforward way of utilizing a finite element method for the  approximation of $\Phi(x)$ in~\eqref{eq:Phi}, with $x$ fixed, is by a Monte Carlo method, i.e., by computing
\begin{equation}
\label{eq:mc-def}
	\Phi_{h,\tau,M}^{\mathrm{MC}}
	(x)
	= 
	M^{-1}
	\sum_{j=1}^M
	\left(
	\left\|
	G X_{h,\tau}^{N_\tau, x, (j)}
	\right\|^2
	+
	\sum_{n=0}^{N_{\tau}-1}
	\tau
	\left\|
	R X_{h,\tau}^{n, x, (j)}
	\right\|^2
	\right).
\end{equation}
Here $M$ is the number of iid samples $(X_{h,\tau}^{n, x, (j)})_{j=1}^M$ of $X_{h,\tau}^{n, x}$ computed by the recursion~\eqref{eq:full_disc_SPDE2}. The Monte Carlo method has a low memory requirement, is easy to parallelize and can be improved by multilevel methods (see, e.g., \cite{G15} as well as the comment at the end of the next section). There are, however, certain situations in which the Lyapunov method of computing $\Phi^{\mathrm{L}}_{h,\tau}(x)$ is preferable for the  approximation of $\Phi(x)$, which we now outline.

\subsubsection{Empirical stability analysis}
\label{sec:empirical-stability-analysis}

The zero solution $X(t) = 0$ to an SPDE with multiplicative noise such as~\eqref{eq:SPDE} (or a discretization thereof) can be simultaneously asymptotically stable in a $\P$-a.s.\ sense and unstable in a mean square sense. In a Monte Carlo type method, this results in the number of samples needed for a satisfactory approximation in practice being prohibitively large. We discuss this challenge to such methods in detail, in the setting that $R = 0$ and  $G = \Id$. 

Consider the rescaled semidiscrete stochastic heat equation 
\begin{equation}
\label{eq:rescaled-SPDE}
	\dd X_h(t) + \lambda A_h X_h(t) \, \dd t = \sigma P_h B(X_h (t)) \, \dd W(t).
\end{equation}
Here, $\lambda, \sigma \ge 0$ while all other parameters are as in Section~\ref{subsec:empirical-convergence}. 
Following~\cite{thalhammer2017}, the zero solution to~\eqref{eq:rescaled-SPDE} is said to be \emph{asymptotically stable $\P$-a.s.}\ if, for any $\epsilon \in (0,1)$ and $\epsilon' >0$, there exist
\begin{enumerate}
	\item $\delta > 0$ such that if $\norm{X_h(0)} < \delta$, then $\P(\norm{X_h(t)} > \epsilon') < \epsilon$ for all $t \ge 0$ and
	\item $\delta' > 0$ such that for any initial value $X_h(0)$ satisfying $\norm{X_h(0)} < \delta'$ $\P$-a.s., $\lim_{t\to \infty} \norm{X_h(t)} = 0$ $\P$-a.s.
\end{enumerate}
It is said to be \emph{asymptotically mean square stable} if for every $\epsilon > 0$, there exist
\begin{enumerate}
	\item $\delta > 0$ such that if $\norm{X_h(0)} < \delta$ then $\E[\| X_h(t) \|^2] < \epsilon$ for all $t\geq 0$ and
	\item $\delta' > 0$ such that if $\norm{X_h(0)} < \delta'$ then $\lim_{t \to \infty} \E [ \| X_h(t) \|^2] = 0$.
\end{enumerate}

In~\cite{thalhammer2017}, the authors consider a discretized stochastic heat equation, similar to~\eqref{eq:rescaled-SPDE} but with finite differences instead of finite elements. They prove that as $\sigma$ increases, the zero solution becomes simultaneously asymptotically stable $\P$-a.s.\ and asymptotically mean square unstable. While their results do not directly translate to our setting, we also expect that for large $T$ and $\sigma$, $\E [ \| X_h(T) \|^2]$ becomes very big while most sample paths of $X_h$ are very small, possibly zero within machine accuracy. If the time discretization of $X_h$ shares this property as assumed in~\cite{thalhammer2017}, $\Phi_{h,\tau,M}^{\mathrm{MC}}(X_0)$ approximates $\E [ \| X_h(T) \|^2]$, and therefore also $\E [ \| X(T) \|^2]$, poorly.

\begin{figure}[ht!]
	\centering
	\subfigure[$T=1$ \label{subfig:T1}]{\includegraphics[width = .49\textwidth]{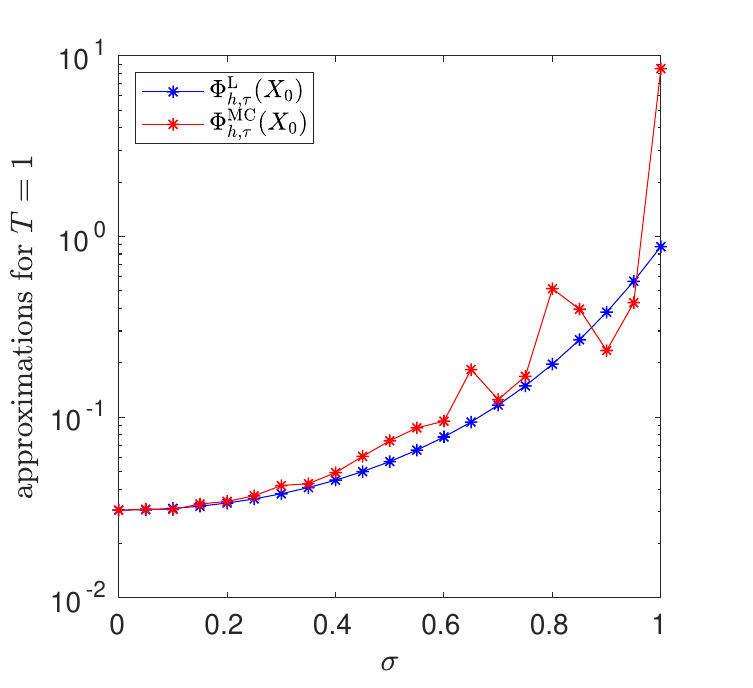}}
	\subfigure[$T=5$ \label{subfig:T5}]{\includegraphics[width = .49\textwidth]{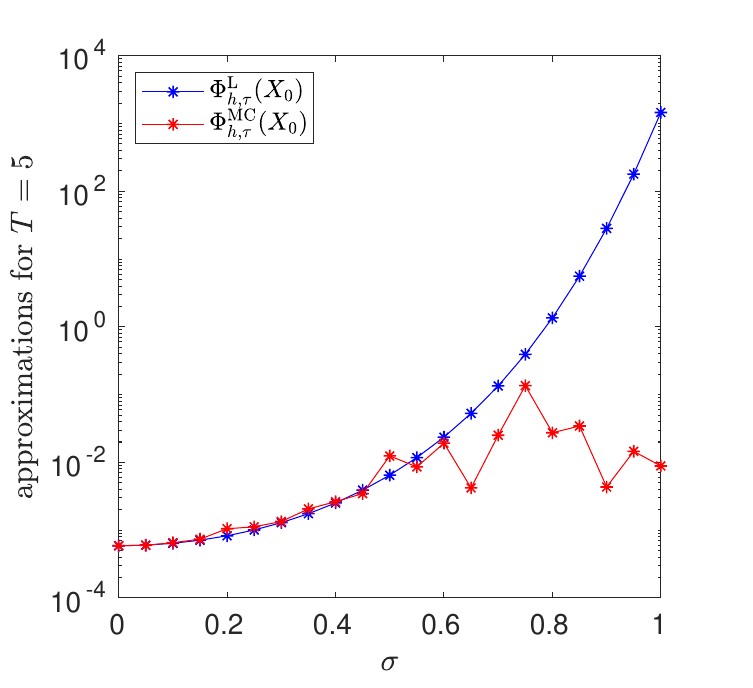}}
	\subfigure[$T=10$ \label{subfig:T10}]{\includegraphics[width = .49\textwidth]{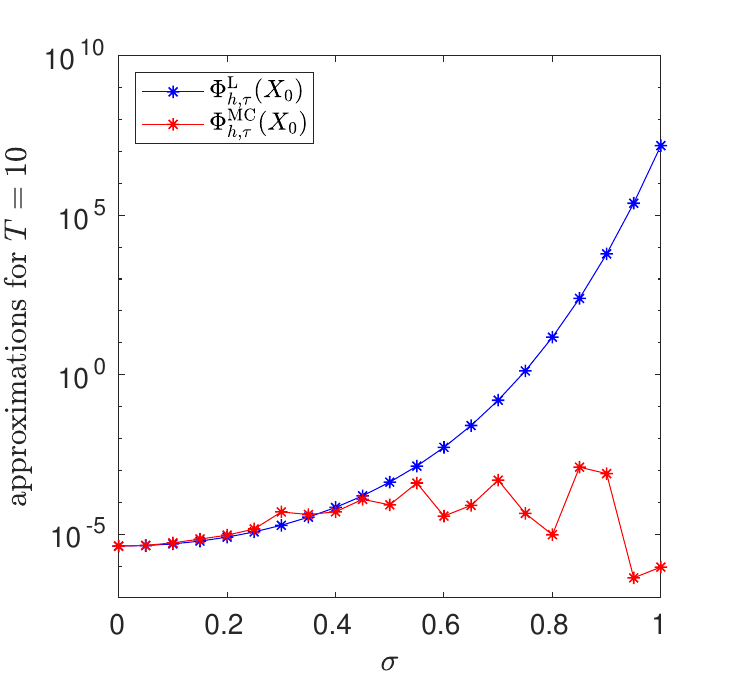}}
	\subfigure[$T=10$, 10-fold increase of sample size \label{subfig:T10-10fold}]{\includegraphics[width = .49\textwidth]{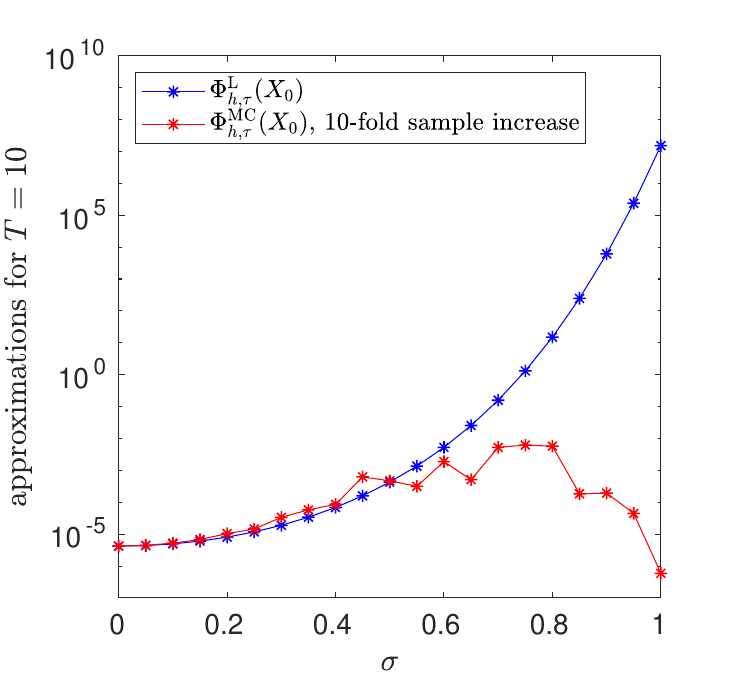}}
	\caption{Lyapunov and Monte Carlo approximations of $\mathbb{E}[\norm{X(T)}^2]$ for $T = 1, 5$ and $10$ and $\sigma \in [0,1]$.}
\end{figure} 

To investigate this in practice, we choose $X_0$ and $\lambda=0.05$ as in Section~\ref{subsec:empirical-convergence} and compute approximations of $\E [ \| X(T) \|^2]$ for various values of $\sigma$ and $T$. The results from the Lyapunov method and the Monte Carlo method are compared. We choose $h = 2^{-5}$ and $\tau = h^2$ for both methods, with $M = h^{-2} = 1024$ samples in the Monte Carlo method. In Figure~\ref{subfig:T1} we see the results of the two methods for $T=1$ and $\sigma \in [0,1]$. They seem to agree reasonably well for small values of $\sigma$, while the variance of $\Phi_{h,\tau,M}^{\mathrm{MC}}(X_0)$ increases as $\sigma$ increases. For moderately larger values of $T$, we already see the consequences of the mean square instability and the $\P$-a.s.\ stability of the zero solution to~\eqref{eq:rescaled-SPDE}. In Figure~\ref{subfig:T5}, with $T=5$, the difference between the two approximations is of several orders of magnitude. This behavior is even more pronounced for $T=10$ in Figure~\ref{subfig:T10}. Even if we increase the number of samples to $M = 10 h^{-2} $ (see Figure~\ref{subfig:T10-10fold}), the results do not improve for $\sigma \approx 1$. In other words, even for moderately large values of $\sigma$ and $T$, the Monte Carlo method fails to give reliable results. As noted in~\cite{thalhammer2017}, replacing this simple Monte Carlo estimator with a multilevel Monte Carlo method does not solve this issue. Indeed, if empirical variances are used to estimate the number of samples needed at each level this method can suffer even more from this stability problem.

\subsubsection{Computational complexity comparison}

Even under parameter choices for which the stability problems outlined in the previous section do not occur, there may be other reasons for why one would prefer to approximate $\E [ \| X(T) \|^2]$ by means of the Lyapunov method rather than by Monte Carlo. First, the computation of $\Phi_{h,\tau,M}^{\mathrm{MC}}
(x)$ can be expensive if $R$ is a non-local operator. Then the matrix $\bR_h $ is typically dense. This matrix is applied for the computation of the term containing $R$ in~\eqref{eq:mc-def} a total of $M \times N_{\tau}$ times. By the law of large numbers, $M$ should be chosen to be proportional to the inverse of the square root of the weak error in Corollary~\ref{cor:weak_full} in order for the Monte Carlo error not to asymptotically dominate the full mean squared error. Therefore, the computational cost can become prohibitively large.

Second, the Lyapunov method yields a way of approximating $\Phi(x)$ for all $x \in H$ simultaneously. The Monte Carlo method can be adapted to this setting by iterating~\eqref{eq:full_disc_SPDE2} to obtain 
\begin{equation*}
X_{h,\tau}^{n,x}
=
\left(\prod_{j=0}^{n-1} S_{h,\tau} \left(P_h + B P_h (\cdot) \Delta W^j \right)\right)
P_h x =: F^{n}_{h,\tau} P_h x
\end{equation*}
and then computing 
\begin{align*}
&\Phi_{h,\tau,M}^{\mathrm{MC}}
(\cdot) \\
&\qquad=
\left\langle
M^{-1}
\sum_{j=1}^M
\left(
(F^{N_\tau,(j)}_{h,\tau})^* G^* G F^{N_\tau,(j)}_{h,\tau}+
\sum_{n=0}^{N_{\tau}-1}
\tau
(F^{n,(j)}_{h,\tau})^* R^* R F^{n,(j)}_{h,\tau}
\right)
P_h \cdot,
\cdot\right\rangle.
\end{align*}
However, forming the matrix corresponding to the sum of operators requires, again, the multiplication and addition of dense matrices, leading to great costs in terms of both computational power and memory. In fact, in the setting of the simulations of Section~\ref{subsec:empirical-convergence}, it can be seen that the computational cost of $\Phi^{\mathrm{L}}_{h,\tau}$ is $\cO(h^{-4})$. This can be compared to $\cO(h^{-6})$ for $\Phi_{h,\tau,M}^{\mathrm{MC}}(\cdot)$, the cost of the iterated Monte Carlo method if one chooses $M \simeq h^{-2}$ so that the additional Monte Carlo error does not dominate. 

In conclusion, we have demonstrated that there may be several situations in which the Lyapunov method of computing $\Phi^{\mathrm{L}}_{h,\tau}(X_0)$ is preferable for the  approximation of $\Phi(X_0)$ compared to a Monte Carlo method. It should, however, be noted that it may be fruitful to combine these methods. For example, the Lyapunov method, computed at a coarse grid, may be used to indicate the presence of mean square instability combined with $\P$-a.s.\ stability in a system such as~\eqref{eq:rescaled-SPDE}, suggesting that remedies such as the ones discussed in~\cite{thalhammer2017} should be taken.

\bibliography{lit}
\bibliographystyle{myplain}

\appendix
\section{Proofs for Section~\ref{subsec:exLyapunovSol}}

\subsection{Proof of Theorem~\ref{thm:Lyapunov}}
\label{sec:eu}

As a first step we prove a result that is needed to ensure that the integrals in the proof of Theorem~\ref{thm:Lyapunov} are well-defined. The setting and notation is the same as in Sections~\ref{sec:setting} and~\ref{sec:Lyapunov_equation_and_SPDE} of the main text.

 \begin{lemma}\label{lemma:strong_cont}
For all $t\in \IT_0$, $\theta_1, \theta_2\geq0$ and $\Upsilon\in \cV$ the mappings
\begin{equation*}
	[0,t)\ni s \mapsto A^{\frac{\theta_1}2}S(t-s)R^*RS(t-s) A^{\frac{\theta_2}2} \in \LB(H)
\end{equation*}
and
\begin{equation*}
(0,t)\ni s \mapsto A^{\frac{\theta_1}2}S(t-s)B^* \Upsilon(s)BS(t-s) A^{\frac{\theta_2}2} \in \LB(H)
\end{equation*}
are strongly continuous.
\end{lemma}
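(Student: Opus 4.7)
My plan is to treat both maps by the same fix-and-split method. Fix $s_0$ in the respective interval and add-subtract intermediate operators to isolate differences of the form $S(t-s) - S(t-s_0)$, and, in the second case, also $\Upsilon(s) - \Upsilon(s_0)$. I will then exploit the semigroup identity: for $s > s_0$,
\begin{equation*}
  S(t-s) - S(t-s_0) = \bigl[\Id - S(s-s_0)\bigr]S(t-s),
\end{equation*}
and symmetrically for $s < s_0$. Combined with commutativity of $A^{\theta/2}$ with $S$ and the smoothing estimate~\eqref{eq:S_smooth}, this lets me push $A^{\theta/2}$ inside so that each remaining $[\Id - S(|s-s_0|)]$ ends up acting on an element of $H$, on which strong continuity of $S$ at $0$ applies.

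For the first mapping, the splitting
\begin{align*}
  &A^{\frac{\theta_1}{2}}\bigl[S(t-s)R^*RS(t-s) - S(t-s_0)R^*RS(t-s_0)\bigr]A^{\frac{\theta_2}{2}}\phi\\
  &\qquad = A^{\frac{\theta_1}{2}}S(t-s)R^*R\bigl[S(t-s) - S(t-s_0)\bigr]A^{\frac{\theta_2}{2}}\phi\\
  &\qquad\quad + A^{\frac{\theta_1}{2}}\bigl[S(t-s) - S(t-s_0)\bigr]R^*RS(t-s_0)A^{\frac{\theta_2}{2}}\phi
\end{align*}
with $\phi \in H$ is the starting point. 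The outer factors stay uniformly bounded as $s \to s_0 \in [0,t)$ by~\eqref{eq:S_smooth}. In the first summand $[S(t-s) - S(t-s_0)]A^{\theta_2/2}\phi \to 0$ in $H$ via an $\varepsilon$/density argument: approximate $\phi$ in $H$ by $\phi_\varepsilon \in \dot H^{\theta_2}$, so that $A^{\theta_2/2}\phi_\varepsilon \in H$ and strong continuity of $S$ applies directly, while the tail is controlled uniformly by $\|S(t-s)A^{\theta_2/2}\|_{\LB(H)}$. In the second summand, commuting $A^{\theta_1/2}$ past the semigroup difference via the identity above reduces matters to $[\Id - S(|s-s_0|)]$ acting on $A^{\theta_1/2}S(t-s_0)R^*RS(t-s_0)A^{\theta_2/2}\phi \in H$, which goes to zero by strong continuity.

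For the second mapping, inserting two intermediate terms gives
\begin{align*}
  &A^{\frac{\theta_1}{2}}\bigl[S(t-s)B^*\Upsilon(s)BS(t-s) - S(t-s_0)B^*\Upsilon(s_0)BS(t-s_0)\bigr]A^{\frac{\theta_2}{2}}\phi\\
  &\qquad = A^{\frac{\theta_1}{2}}S(t-s)B^*\Upsilon(s)B\bigl[S(t-s) - S(t-s_0)\bigr]A^{\frac{\theta_2}{2}}\phi\\
  &\qquad\quad + A^{\frac{\theta_1}{2}}S(t-s)B^*\bigl[\Upsilon(s) - \Upsilon(s_0)\bigr]BS(t-s_0)A^{\frac{\theta_2}{2}}\phi\\
  &\qquad\quad + A^{\frac{\theta_1}{2}}\bigl[S(t-s) - S(t-s_0)\bigr]B^*\Upsilon(s_0)BS(t-s_0)A^{\frac{\theta_2}{2}}\phi.
\end{align*}
The outer two summands are treated as in the first mapping, using in addition the uniform bound $\|B^*\Upsilon(s)B\|_{\LB(H)} \lesssim \|A^{(1-\beta)/2}\Upsilon(s)A^{(1-\beta)/2}\|_{\LB(H)}$, which stays bounded for $s$ near $s_0 > 0$ because $\Upsilon \in \cV$. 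The genuinely new middle summand is handled by the uniform continuity $\Upsilon \in \cC(\IT_0, \LB(\dot H^{\beta-1},\dot H^{1-\beta}))$: since $B\colon H \to \LB_2(U,\dot H^{\beta-1})$ and $B^*\colon \LB_2(U, \dot H^{1-\beta}) \to H$ are bounded, the natural extension of $[\Upsilon(s) - \Upsilon(s_0)]$ to Hilbert--Schmidt operators, applied to the fixed element $BS(t-s_0)A^{\theta_2/2}\phi \in \LB_2(U, \dot H^{\beta-1})$, tends to $0$ in $\LB_2(U, \dot H^{1-\beta})$.

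The main technical obstacle I expect is the bookkeeping of which compositions are well-defined in the scale $(\dot H^r)_{r \in \R}$ when $\theta_1$ or $\theta_2$ is positive (so that $A^{\theta_i/2}\phi$ only lies in $\dot H^{-\theta_i}$), and correctly placing the smoothing semigroups so that every intermediate product lands back in $H$. Once this is set up, each summand reduces by the strategy above to a routine application of strong continuity of $S$ at $0$ on $H$ and of the operator-norm continuity of $\Upsilon$.
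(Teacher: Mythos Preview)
Your proposal is correct and essentially mirrors the paper's proof: the same three-term telescoping split, with the outer semigroup differences handled via strong continuity of $s \mapsto A^{\theta_i/2}S(t-s)$ in $\LB(H)$ (obtained from strong continuity of $S$ by a density argument, exactly as you describe) and the middle term via $\Upsilon \in \cC(\IT_0,\LB(\dot H^{\beta-1},\dot H^{1-\beta}))$. One minor slip: for $s > s_0$ your factorization $S(t-s)-S(t-s_0) = [\Id - S(s-s_0)]S(t-s)$ gives $[\Id - S(s-s_0)]$ acting on $A^{\theta_1/2}S(t-s)\psi$, which still depends on $s$; this is harmless (since $s \mapsto A^{\theta_1/2}S(t-s)\psi$ is itself continuous in $H$ by the same density argument), but your statement that it reduces to a \emph{fixed} element at $s_0$ is only literally correct for $s < s_0$.
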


\begin{proof}
We prove the strong continuity of the second mapping in detail. The proof of the first is done in the same way. 

The second mapping is a composition of the mappings $(0,t)\ni s\mapsto A^{\theta_1/2}S(t-s)\in \LB(H)$, $(0,t)\ni s\mapsto B^*\Upsilon(s)B\in \LB(H)$ and $(0,t)\ni s\mapsto S(t-s)A^{\theta_2/2}\in \LB(H)$. The first and third are strongly continuous as mappings $(0,t)\to\LB(\dot H^{\theta_1} ,H)$ and $(0,t)\to \LB(\dot H^{\theta_2},H)$, respectively. This property can be extended to $(0,t)\to\LB(H)$ in both cases by considering an approximating sequence in $\dot H^{\theta_1}$ and $\dot H^{\theta_2}$, respectively. Using \eqref{eq:S_smooth}, the strong continuity of $S$ and uniform continuity of $\Upsilon$ we obtain for $s,\tilde{s} \in (0,t)$ and $\phi \in H$ that
\begin{align*}
&\big\|
    \big(
       A^{\frac{\theta_1}2}S(t-s)B^* \Upsilon(s)BS(t-s) A^{\frac{\theta_2}2}
      -
      A^{\frac{\theta_1}2}S(t-\tilde{s})B^* \Upsilon(\tilde{s})BS(t-\tilde{s}) A^{\frac{\theta_2}2}
    \big)\phi
  \big\|\\
&\qquad
  \leq
  \big\|
    \big(
      A^{\frac{\theta_1}2}S(t-s)-A^{\theta_1}S(t-\tilde{s})\big)B^* \Upsilon(s)BS(t-s) A^{\frac{\theta_2}2}    \phi
  \big\|\\  
&\qquad\quad
  +
  \big\|
      A^{\frac{\theta_1}2}S(t-\tilde{s})B^*( \Upsilon(s)-\Upsilon(\tilde{s}))BS(t-s) A^{\frac{\theta_2}2}\phi
  \big\|\\  
&\qquad\quad
  +
  \big\|
      A^{\frac{\theta_1}2}S(t-\tilde{s})B^*\Upsilon(\tilde{s})B
      \big(
        S(t-s) A^{\frac{\theta_2}2}
        -
        S(t-\tilde{s}) A^{\frac{\theta_2}2}
      \big)\phi
  \big\|\\
&\qquad
  \leq
  \big\|
    \big(
      A^{\frac{\theta_1}2}S(t-s)-A^{\frac{\theta_1}2}S(t-\tilde{s})
    \big)
    B^* \Upsilon(s)BS(t-s) A^{\frac{\theta_2}2}\phi
  \big\|\\  
&\qquad\quad
  +
  b^2
  C_{\theta_1}C_{\theta_2}
  (t-\tilde{s})^{-\frac{\theta_1}2} (t-s)^{-\frac{\theta_2}2}
  \big\|A^{\frac{1-\beta}2}( \Upsilon(s)-\Upsilon(\tilde{s}))A^{\frac{1-\beta}2}\big\|_{\LB(H)}
  \| \phi \|\\  
&\qquad\quad
  +
  b^2 C_{\theta_1}(t-\tilde{s})^{-\frac{\theta_1}2}\tilde{s}^{\beta-1}
  \triple
    \Upsilon
  \triple_0
  \big\|
      \big(
        S(t-s) A^{\frac{\theta_2}2}
        -
        S(t-\tilde{s}) A^{\frac{\theta_2}2}
      \big)\phi
  \big\|
\end{align*}
with the constant notation introduced in Section~\ref{sec:setting}.
Since $B^* \Upsilon(s)BS(t-s) A^{\theta_2/2}\phi \in H$ and $S$ is strongly continuous and since $\Upsilon\colon\IT_0\to \LB(\dot{H}^{\beta-1},\dot{H}^{1-\beta})$ is uniformly continuous, the right hand side converges to zero as $\tilde{s}$ tends to $s$. This completes the proof.
\end{proof}

The rest of the proof of Theorem~\ref{thm:Lyapunov} is based on a global fixed point argument in $\cV$. 
Equation \eqref{eq:Lyapunov_mild} is written in the form of a fixed point equation 
\begin{equation*}
   L
=
 \cH(L)
 = \cI
  +
  \cJ
  +
  \cK(L),
\end{equation*}
with $\cI, \cJ \in \cV$, $\cK \colon \cV \to \cV$ acting on $\phi\in H$ by
\begin{align*}
\cI(t) \phi
  & = S(t)G^*GS(t) \phi,\\
\cJ(t) \phi
  & =
    \int_0^t
      S(t-s)R^*RS(t-s) \phi
    \diff s,\\
\cK(\Upsilon)(t) \phi
  & =
    \int_0^t
      S(t-s)B^* \Upsilon(s) BS(t-s) \phi
    \diff s.
\end{align*}
Existence of a unique solution in~$\cV$ follows by the Banach fixed point theorem by  proving that $\cH$ is well-defined and that  for some $\sigma>0$ the fixed point map $\cH$ is a contraction, i.e., that there exists $\eta \in (0,1)$ such that for all $\Upsilon_1,\Upsilon_2\in \cV$
\begin{equation}\label{eq:contraction}
 \triple \cH(\Upsilon_1) - \cH(\Upsilon_2) \triple_{\sigma}
    = \triple \cK(\Upsilon_1) - \cK(\Upsilon_2) \triple_{\sigma}
    = \triple \cK(\Upsilon_1 - \Upsilon_2) \triple_{\sigma}
    \le \eta \, \triple \Upsilon_1 - \Upsilon_2 \triple_{\sigma}.
\end{equation}
The proof is organized as follows: We start by proving that $\triple\cI\triple_0+\triple\mathcal{J}\triple_0+\triple\cK\triple_0<\infty$ and continue by showing that $\cI$, $\cJ$ and $\cK(\Upsilon)$ with  $\Upsilon\in \cV$ are strongly and uniformly continuous on $\IT$ and $\IT_0$, respectively. From this we conclude that $\cI$, $\cJ$, $\cK$ are well-defined and derive bounds to show the contraction property~\eqref{eq:contraction} of~$\cH$ and the claimed regularity estimates.

To prove that $\triple\cI\triple_0<\infty$ we observe, using \eqref{eq:S_smooth}, that for all $\theta_1,\theta_2\geq0$ and $t\in\IT_0$
\begin{equation}\label{eq:I}
  \big\|
    A^{\frac{\theta_1}2}
    \cI(t)
    A^{\frac{\theta_2}2}
  \big\|_{\LB(H)}
\leq
  g^2
  \big\|
    A^{\frac{\theta_1}2}S(t)
  \big\|_{\LB(H)}
  \big\|
    S(t)A^{\frac{\theta_2}2}
  \big\|_{\LB(H)}
  \leq
  g^2C_{\theta_1}C_{\theta_2} t^{-\frac{\theta_1+\theta_2}2}.
\end{equation}
Setting $\theta_1=\theta_2=1-\beta$ and $\theta_1=\theta_2=0$, respectively, shows the desired bound 
\begin{equation}\label{eq:I_finite}
\triple
    \cI
  \triple_0
  \leq g^2(C_0^2 + C_{1-\beta}^2)<\infty.
\end{equation}
For $\cJ$ we use~\eqref{eq:S_smooth} to obtain that for all $\theta_1,\theta_2\geq0$, $t\in\IT_0$, $s\in[0,t)$ and $\phi\in H$
\begin{equation}\label{eq:J0.1}
  \big\|
    A^{\frac{\theta_1}2}S(t-s)R^*RS(t-s)A^{\frac{\theta_2}2} \phi
  \big\|
  \leq
  r^2 C_{\theta_1} C_{\theta_2} \|\phi\| (t-s)^{-\frac{\theta_1+\theta_2}2}.
\end{equation}
Together with the strong continuity of the mapping shown in Lemma~\ref{lemma:strong_cont}, this bound implies that 
$A^{\theta_1/2}S(t-\cdot)R^*RS(t-\cdot)A^{\theta_2/2} \phi \in L^1([0,t],H)$ 
when $\theta_1,\theta_2\in[0,2)$ with $\theta_1+\theta_2<2$, 
and in particular that the Bochner integral in $\cJ$ is well-defined. It also shows that $\cJ$ can be extended to a mapping $\IT_0\to \LB(\dot{H}^{-\theta_2},\dot{H}^{\theta_1})$. Next, for $t\in\IT_0$
\begin{align}\label{eq:J0.2}
\begin{split}
	\big\|
	A^{\frac{\theta_1}2}
	\cJ(t)
	A^{\frac{\theta_2}2}
	\big\|_{\LB(H)} 
	&= \sup_{\phi \in H, \| \phi\| = 1} 
	\Big\| 
	\int_0^t
	A^{\frac{\theta_1}2}S(t-s)R^*RS(t-s)A^{\frac{\theta_2}2} \phi
	\diff s
	\Big\| \\
	&\le 
	\int_0^t
	 \sup_{\phi \in H, \| \phi\| = 1}  
	\Big\| 
	A^{\frac{\theta_1}2}S(t-s)R^*RS(t-s)A^{\frac{\theta_2}2} \phi
	\Big\| 
	\diff s \\
	&= \int_0^t
	\Big\| 
	A^{\frac{\theta_1}2}S(t-s)R^*RS(t-s)A^{\frac{\theta_2}2}
	\Big\|_{\LB(H)} 
	\diff s.
\end{split}
\end{align}
 Using \eqref{eq:J0.1} we have for $t\in\IT_0$
\begin{equation}\label{eq:J}
\big\|
    A^{\frac{\theta_1}2}
    \cJ(t)
    A^{\frac{\theta_2}2}
  \big\|_{\LB(H)}
\leq
  \frac{
    2r^2C_{\theta_1}C_{\theta_2} 
  }{
    2-\theta_1-\theta_2
  } \,
  t^{1-\frac{\theta_1-\theta_2}2}.
\end{equation}
Setting $\theta_1=\theta_2=1-\beta$ and $\theta_1=\theta_2=0$, respectively, shows that 
\begin{equation}\label{eq:J_finite}
  \triple
    \cJ
  \triple_0
  \leq
  r^2
  \left(
    C_0^2 T + C_{1-\beta}^2 \beta^{-1} T^\beta
  \right) 
  <\infty.
\end{equation}
We now turn our attention to $\cK$. For all $\theta_1,\theta_2\geq0 $, $t\in\IT_0$, $s\in(0,t)$, $\Upsilon\in\cV$, and $\phi\in H$, we have by using \eqref{eq:S_smooth} that
\begin{align}\label{eq:K0.1}
\begin{split}
&\big\|
    A^{\frac{\theta_1}2}S(t-s)B^* \Upsilon(s)BS(t-s) A^{\frac{\theta_2}2}\phi
  \big\|\\
&\qquad
  \leq
  b^2
  C_{\theta_1}
  C_{\theta_2}
  \triple
    \Upsilon
  \triple_{\sigma}
  \|\phi\|
  e^{\sigma s}
  s^{\beta-1} (t-s)^{-\frac{\theta_1+\theta_2}2}.
\end{split}
\end{align}
Combining this with Lemma~\ref{lemma:strong_cont} implies that for $\theta_1,\theta_2\in[0,2)$ with $\theta_1+\theta_2<2$, $A^{\theta_1/2}S(t-\cdot)B^* \Upsilon(\cdot)BS(t-\cdot) A^{\theta_2/2}\phi \in L^1([0,t],H)$
and in particular that the Bochner integral in $\cK$ is well-defined. Similarly to $\cJ$, it also implies that $\cK(\Upsilon)$ can be extended to $\IT\to \LB(\dot{H}^{-\theta_2},\dot{H}^{\theta_1})$. Using \eqref{eq:K0.1} we obtain, similarly to \eqref{eq:J0.2}, the bound
\begin{equation}\label{eq:K0.2}
\big\|
    A^{\frac{\theta_1}2}
    \cK(\Upsilon)(t)
    A^{\frac{\theta_2}2}
  \big\|_{\LB(H)}
  \leq
  b^2C_{\theta_1}C_{\theta_2}
  \Bigg(
    \int_0^t
      e^{\sigma s}
      s^{\beta-1}
      (t-s)^{-\frac{\theta_1+\theta_2}2}
    \diff s
  \Bigg)
  \triple
    \Upsilon
  \triple_\sigma.
\end{equation}
For the case $\sigma = 0$, we conclude from~\eqref{eq:beta_integral} that
\begin{equation}\label{eq:K}
  \big\|
    A^{\frac{\theta_1}2}
    \cK(\Upsilon)(t)
    A^{\frac{\theta_2}2}
  \big\|_{\LB(H)}
\leq
  b^2C_{\theta_1}C_{\theta_2}
  t^{\beta-\frac{\theta_1+\theta_2}2}
  \B \Big(\beta,1-\frac{\theta_1+\theta_2}2\Big)
  \triple
    \Upsilon
  \triple_0.
\end{equation}
Setting again $\theta_1=\theta_2=1-\beta$ and $\theta_1=\theta_2=0$, respectively, yields
\begin{equation}\label{eq:K_finite}
  \triple
    \cK(\Upsilon)
  \triple_0
  \leq
  b^2 
  \Big(
    C_0^2 \B(\beta,1) T
    +
    C_{1-\beta}^2 \B(\beta,\beta) T^\beta
  \Big)
  \triple
    \Upsilon
  \triple_0
  <\infty
\end{equation}
and combining \eqref{eq:I_finite}, \eqref{eq:J_finite}, \eqref{eq:K_finite} implies that $\triple \cH(\Upsilon) \triple_0<\infty$.

Next we show continuity of $\cI$, $\cJ$ and $\cK(\Upsilon)$ on $\IT_0$ and $\IT$. For this purpose we prove H\"older continuity in operator norms on $(0,T)$ and strong continuity at zero separately. The Hölder continuity also implies \ref{eq:temp_reg} once existence and uniqueness have been established. For all $\theta_1,\theta_2\geq0$, $\xi \in[0,1]$ and $t_1,t_2\in\IT_0$ with $t_1<t_2$, we bound using \eqref{eq:S_smooth}, \eqref{eq:S_Holder1} and the semigroup property of $S$ 
\begin{align}\label{eq:Holder_I}
\begin{split}
&\big\|
    A^{\frac{\theta_1}2}
    \big(
      \cI(t_2)-\cI(t_1)
    \big)
    A^{\frac{\theta_2}2}
  \big\|_{\LB(H)}\\
&\quad
  \leq
  g^2
  \big\|
    A^{\frac{\theta_1}2}
    S(t_1)
    A^\xi
  \big\|_{\LB(H)}
  \big\|
    A^{-\xi}
    \big(
      S(t_2-t_1)-\Id
    \big)
  \big\|_{\LB(H)}
  \big\|
    S(t_2)
    A^{\frac{\theta_2}2}
  \big\|_{\LB(H)}\\
&\qquad
  +
  g^2
  \big\|
    A^{\frac{\theta_1}2}
    S(t_1)
  \big\|_{\LB(H)}
  \big\|
    \big(
      S(t_2-t_1)
      -
      \Id
    \big)
    A^{-\xi}
  \big\|_{\LB(H)}
  \big\|
    A^{\xi}
    S(t_1)
    A^{\frac{\theta_2}2}
  \big\|_{\LB(H)}\\
&\quad
  \leq
  g^2 
  C_{2\xi}
  \big(
    C_{\theta_1 + 2\xi} C_{\theta_2}
    +
    C_{\theta_1} C_{\theta_2 + 2\xi} 
  \big)
  t_1^{-\frac{\theta_1+\theta_2}2-\xi}
  |t_2-t_1|^{\xi}.
\end{split}
\end{align}
This shows in particular strong continuity $\cI\in\cC_{\mathrm{s}}(\IT_0, \LB(H))$ and uniform continuity $\cI \in \cC(\IT_0,\LB(\dot{H}^{\beta-1}, \dot{H}^{1-\beta}))$. To prove $\cI\in \cV$ it remains to prove strong continuity $\cI \in \cC_{\mathrm{s}}(\IT, \LB(H))$ and for this we have by the strong continuity~\eqref{eq:S_Holder1} of $S$ for $\phi\in H$
\begin{align*}
  \lim_{t\to 0}
    \big\| 
      (\cI(t) - \cI(0) ) \phi
    \big\|
&\le 
  \lim_{t\to 0}
  \big\|
    (S(t) - \Id)G^*GS(t)\phi
  \big\|\\
&\quad
  +
  \lim_{t\to 0}
  \big\|
    G^*G(S(t)-\Id) \phi
  \big\|
  =
  0.
\end{align*}

Let us continue with $\cK$. We observe that for $\phi\in H$, $\Upsilon\in\cV$ and $t_1,t_2\in\IT_0$ with $t_1<t_2$ that
\begin{align*}
&\big(
    \cK(\Upsilon)(t_2) - \cK(\Upsilon)(t_1)
  \big)\phi\\
&\qquad
  =
  \int_0^{t_1}
    (
      S(t_2-s) - S(t_1-s)
    )
    B^* \Upsilon(s) B
    S(t_2-s)\phi
  \diff s\\
& \qquad\quad
  +
  \int_0^{t_1}
    S(t_1-s)
    B^* \Upsilon(s) B
    (
      S(t_2-s) - S(t_1-s)
    )\phi
  \diff s\\
&\qquad\quad
  +
  \int_{t_1}^{t_2}
    S(t_2-s)B^* \Upsilon(s) BS(t_2-s)\phi
  \diff s.
\end{align*}
Similarly to \eqref{eq:J0.2}, this yields by using the semigroup property of $S$ and \eqref{eq:S_Holder1} that
\begin{align*}
&\big\|
    A^{\frac{\theta_1}2}
    \big(
      \cK(\Upsilon)(t_2)-\cK(\Upsilon)(t_1)
    \big)
    A^{\frac{\theta_2}2}
  \big\|_{\LB(H)}\\
&\qquad\leq
  b^2
  \int_0^{t_1}
    \big\|
      A^{\frac{\theta_1}2}
      S(t_1-s)
      A^{\xi}
    \big\|_{\LB(H)}
    \big\|
      A^{-\xi}
      \big(
        S(t_2-t_1)-\Id
      \big)
    \big\|_{\LB(H)}\\
&\qquad\qquad\qquad\quad\times
    \big\|
      A^{\frac{1-\beta}2}
      \Upsilon(s)
      A^{\frac{1-\beta}2}
    \big\|_{\LB(H)}
    \big\|
        S(t_2-s)
      A^{\frac{\theta_2}2}
    \big\|_{\LB(H)}
  \diff s\\
&\qquad\quad
  +
  b^2
  \int_0^{t_1}
    \big\|
        A^{\frac{\theta_1}2}
        S(t_1-s)     
    \big\|_{\LB(H)}
    \big\|
      A^{\frac{1-\beta}2}
      \Upsilon(s)
      A^{\frac{1-\beta}2}
    \big\|_{\LB(H)}\\
&\qquad\qquad\qquad\quad\times    
    \big\|
      S(t_1-s)
      A^{\xi+\frac{\theta_2}2}
    \big\|_{\LB(H)}
    \big\|
      A^{-\xi}
      \big(
        S(t_2-t_1)-\Id
      \big)
    \big\|_{\LB(H)}
  \diff s\\
&\qquad\quad
  +
  b^2
  \int_{t_1}^{t_2}
    \big\|
      A^{\frac{\theta_1}2}
      S(t_2-s)
    \big\|_{\LB(H)}
    \big\|
      A^{\frac{1-\beta}2}
      \Upsilon(s)
      A^{\frac{1-\beta}2}
    \big\|_{\LB(H)}\\
& \qquad\qquad\qquad\quad\times    
    \big\|
      S(t_2-s)
      A^{\frac{\theta_2}2}
    \big\|_{\LB(H)}
  \diff s
\end{align*}
and thus using \eqref{eq:S_smooth} and \eqref{eq:S_Holder1}
\begin{align*}
&\big\|
    A^{\frac{\theta_1}2}
    \big(
      \cK(\Upsilon)(t_2)-\cK(\Upsilon)(t_1)
    \big)
    A^{\frac{\theta_2}2}
  \big\|_{\LB(H)}\\
&\qquad\leq
  b^2
  C_\xi
  C_{\theta_1+2\xi}
  C_{\theta_2}
  \triple
    \Upsilon
  \triple_0
  \Bigg(
    \int_0^{t_1}
      s^{\beta-1}(t_1-s)^{-\frac{\theta_1}2-\xi}(t_2-s)^{-\frac{\theta_2}2}
    \diff s
  \Bigg)
  |t_2-t_1|^\xi \\ 
  &\qquad\quad
  +
  b^2
  C_{2\xi}
  C_{\theta_1}
  C_{\theta_2+2\xi}
  \triple
    \Upsilon
  \triple_0
  \Bigg(
    \int_0^{t_1}
      s^{\beta-1}(t_1-s)^{-\frac{\theta_1+\theta_2}2-\xi}
    \diff s
  \Bigg)
  |t_2-t_1|^\xi \\ 
  &\qquad\quad
  +
  b^2 C_{\theta_1} C_{\theta_2}
  \triple
  \Upsilon
  \triple_0
  \int_{t_1}^{t_2}
  s^{\beta-1}(t_2-s)^{-\frac{\theta_1+\theta_2}2}
  \diff s.
\end{align*}
Since for $s\in[0,t_1)$, $(t_2-s)^{-\theta_2/2} \leq (t_1-s)^{-\theta_2/2}$ and for $s\in(t_1,t_2]$,
$
  s^{\beta-1}
  \leq
  T^\beta
  t_1^{-\xi-(\theta_1+\theta_2)/2} s^{\xi + (\theta_1 + \theta_2)/2 - 1}
  \leq
  T^\beta
  t_1^{-\xi-(\theta_1+\theta_2)/2} (s-t_1)^{\xi + (\theta_1 + \theta_2)/2 - 1}
$,
we have
\begin{align*}
&\big\|
    A^{\frac{\theta_1}2}
    \big(
      \cK(\Upsilon)(t_2)-\cK(\Upsilon)(t_1)
    \big)
    A^{\frac{\theta_2}2}
  \big\|_{\LB(H)}\\
&\,\leq
  b^2
  C_{2\xi}
  \big(
    C_{\theta_1 + 2\xi}
    C_{\theta_2}
    +
    C_{\theta_1}
    C_{\theta_2+2\xi}
  \big)    
  \triple
    \Upsilon
  \triple_0
  \Bigg(
    \int_0^{t_1}
      s^{\beta-1}(t_1-s)^{-\frac{\theta_1+\theta_2}2-\xi}
    \diff s
  \Bigg)
  |t_2-t_1|^\xi \\ 
  &\,\quad
  +
  b^2 C_{\theta_1} C_{\theta_2}
  \triple
  \Upsilon
  \triple_0
  T^\beta
  t_1^{-\xi-\frac{\theta_1+\theta_2}2} 
  \int_{t_1}^{t_2}
    (s-t_1)^{\xi + \frac{\theta_1 + \theta_2}2 - 1}
    (t_2-s)^{-\frac{\theta_1+\theta_2}2}
  \diff s.
\end{align*}
Using \eqref{eq:beta_integral} we therefore obtain
\begin{equation}\label{eq:Holder_K}
\big\|
A^{\theta_1}
\big(
\cK(\Upsilon)(t_2)-\cK(\Upsilon)(t_1)
\big)
A^{\theta_2}
\big\|_{\LB(H)} \lesssim
  t_1^{-\frac{\theta_1+\theta_2}2-\xi}|t_2-t_1|^\xi.
\end{equation}
This implies the desired continuity on~$\IT_0$. For the continuity at zero, we use \eqref{eq:S_smooth} to see that
\begin{equation*}
  \big\| 
    (\cK(\Upsilon)(t) - \cK(\Upsilon)(0) ) \phi
  \big\|
=
  \Big\|
    \int_0^t
      S(t-s)B^* \Upsilon(s) BS(t-s) \phi
    \diff s
    \Big\|
  \leq
  \frac{
    b^2
    C_0^2t^\beta   
    \triple
      \Upsilon
    \triple_0
  }{\beta}
\end{equation*}
and as a consequence 
$
  \lim_{t\to 0}\| 
    (\cK(\Upsilon)(t) - \cK(\Upsilon)(0) ) \phi
  \| = 0
$.
We conclude that $\cK(\Upsilon)\in \cV$. The proof of $\cJ\in \cV$ is similar and therefore omitted. In conclusion we have shown that the fixed point map $\cH$ is well-defined.

It remains to prove the contraction property~\eqref{eq:contraction}. For $\sigma\in\R$ the same arguments as in the proof of \cite[Theorem~2.9]{AJK21} imply that for all $\lambda\in[0,1)$ 
\begin{equation*}
\lim_{\sigma\uparrow\infty}
  \Bigg(
    \sup_{t\in\IT_0}
    t^{1-\beta}
    \int_0^t
      e^{-\sigma (t-s)}
      s^{\beta-1}
      (t-s)^{-\lambda}
    \diff s
  \Bigg)
  =
  0.
\end{equation*}
Combining this with~\eqref{eq:K0.2} implies the existence of $\sigma > 0$ and $\eta\in(0,1)$ such that
\begin{align*}
  \triple
    \cK(\Upsilon)
  \triple_\sigma
&\leq
  b^2
  \Bigg(
    C_0^2
    \sup_{t\in\IT_0}
    \int_0^t
      e^{-\sigma (t-s)}
      s^{\beta-1}
    \diff s\\
&\qquad
    +
    C_{1-\beta}^2
    \sup_{t\in\IT_0}
    t^{1-\beta}
    \int_0^t
      e^{-\sigma (t-s)}
      s^{\beta-1}
      (t-s)^{\beta-1}
    \diff s
  \Bigg)
  \triple
    \Upsilon
  \triple_\sigma
 \le \eta \triple \Upsilon \triple_\sigma.
\end{align*}
We have therefore shown that $\cH$ is a contraction with respect to the $\triple\cdot\triple_\sigma$-norm for sufficiently large $\sigma>0$. The Banach fixed point theorem guarantees the existence and uniqueness of a fixed point $L$ to the mapping $\cH$. This is the unique mild solution to \eqref{eq:Lyapunov_mild}. To prove that $L(\IT)\subset \Sigma(H)$ we consider $\cV' = \Cs(\IT,\Sigma(H)) \cap \cC(\IT_0,\LB(\dot H^{\beta-1},\dot H^{1-\beta})) \subset \cV$, which is a Banach subspace of $\cV$ since $\Sigma(H)$ is a closed subspace of $\cL(H)$, along with the restriction $\mathcal{H}'\colon \cV' \to \cV'$ of the fixed point map $\cH\colon \cV \to \cV$ to $\cV'$. It is a contraction with the same $\triple \cdot\triple_{\sigma}$-norm as $\cH$. One easily checks that $\mathcal{H}'(\Upsilon')(t)$ is self-adjoint for self-adjoint $\Upsilon' \in \cV'$ and thus $\mathcal{H}'$ is well-defined.  Therefore, a second application of the Banach fixed point theorem yields a unique $L'\in\cV'$ such that $\mathcal{H}'(L') = L'$. Since $\cV'\subset\cV$ and $L\in\cV$ is unique, $L=L'$ and thus $L(\IT)\subset \Sigma(H)$. Moreover, Theorem~\ref{thm:Phi} in the end of this section shows that $L(\IT)\subset \Sigma^+(H)$ and we stress that none of the proofs of Theorem~\ref{thm:mild_is_weak}, Propositions~\ref{prop:spat}--\ref{prop:strong}, Lemma~\ref{thm:PhiSemidiscr} leading to Theorem~\ref{thm:Phi} relies on the positivity of $L$.
The bounds~\eqref{eq:I}, \eqref{eq:J} and \eqref{eq:K} imply~\ref{eq:spat_reg}. The bounds~\eqref{eq:Holder_I}, \eqref{eq:Holder_K} imply the H\"older regularity stated in~\ref{eq:temp_reg}. 

\subsection{Proof of Theorem~\ref{thm:mild_is_weak}}
\label{sec:reg}

We write $F=R^*R+B^*LB$ and let $L$ be a mild solution, i.e., $L$ satisfies \eqref{eq:Lyapunov_mild}. Since for all $t\in\IT$, $\phi\in H$,
\begin{equation*}
  L(t)\phi
  =
  S(t)G^*GS(t)\phi
  +
  \int_0^t
    S(t-s)
    F(s)
    S(t-s)\phi
  \diff s,
\end{equation*}
we obtain for $t+h\le T$ and $\phi\in H$
\begin{align*}
  L(t+h)\phi
&=
  S(h)S(t)G^*GS(t)S(h)\phi
  +
  \int_0^{t+h}
    S(t+h-r)
    F(r)
    S(t+h-r)\phi
  \diff r\\
&=
  S(h)L(s)S(h)\phi
  +
  \int_t^{t+h}
    S(t+h-r)
    F(r)
    S(t+h-r)\phi
  \diff r.
\end{align*}
Therefore, for $\phi,\psi\in\dot H^2$
\begin{align*}
  \langle L(t+h)\phi,\psi \rangle
&=
  \langle L(t)S(h)\phi,S(h)\psi \rangle\\
&\quad
  +
  \int_t^{t+h}
    \langle
      S(t+h-r)
      F(r)
      \phi,
      S(t+h-r)
      \psi
    \rangle
  \diff r
\end{align*}
and subtracting $\langle 
    L(t)\phi,\psi 
  \rangle$ on both sides and dividing by $h>0$ gives
\begin{align*}
  \Bigg\langle 
    \frac{L(t+h)-L(t)}{h}\phi,\psi 
  \Bigg\rangle
&=
  \Bigg\langle 
    L(t)\frac{S(h)-\Id}{h}\phi,S(h)\psi 
  \Bigg\rangle
  +
  \Bigg\langle 
    L(t)\phi,\frac{S(h)-\Id}{h}\psi 
  \Bigg\rangle\\
&\quad
  +
  \frac1h
  \int_t^{t+h}
    \langle
      S(t+h-r)
      F(r)
      \phi,
      S(t+h-r)
      \psi
    \rangle
  \diff r.
\end{align*}
The semigroup $S$ is strongly differentiable, hence weakly differentiable with derivative
$
  \tfrac{\mathrm d}{\mathrm d t}
  \langle
    S(t) \phi,\psi
  \rangle
  =
  -
  \langle
    AS(t) \phi,\psi
  \rangle
$ for $\phi,\psi\in\dot H^2$.
In the limit as $h\to 0$, we obtain, by the Lebesgue differentiation theorem, the weak form~\eqref{eq:Lyapunov_var}. This completes the first direction of the proof.

Assume next that the operator-valued function $L\in\cV$ satisfies for $t\in\IT_0$, $\phi,\psi\in \dot H^2$ the variational equation \eqref{eq:Lyapunov_var}. Using~\eqref{eq:aA} we bound
\begin{align*}
  \Big|
    \frac{\dd}{\dd t}
    \langle
      L(t) \phi,\psi
    \rangle
  \Big|
&\leq
  |a(L(t)\phi,\psi)|
  +
  |a(L(t)\psi,\phi)|
  +
  |\langle
    R\phi,R\psi
  \rangle
  |
  +
  |\langle
    L(t)B\phi,B\psi
  \rangle_{\LB_2^0}
  |\\
&\leq
  \|L(t)\|_{\LB(H)}
  \big(
    \|A\phi\|\|\psi\|
    +
    \|\phi\|\|A\psi\|
  \big)\\
&\quad
  +
  \big(
    r^2
    +
    b^2
    \big\|
      A^{\frac{1-\beta}2}
      L(t)
      A^{\frac{1-\beta}2}
    \big\|_{\LB(H)}
  \big)
  \|\phi\|\|\psi\|
\end{align*}
with $t\in\IT_0$, $\phi,\psi\in \dot H^2$.
Since $\|\phi\|\lesssim\|A\phi\|=\|\phi\|_{\dot H^2}$ 
we conclude that
\begin{equation*}
  \Big|
    \frac{\dd}{\dd t}
    \langle
      L(t) \phi,\psi
    \rangle
  \Big|
  \lesssim
  (1+t^{\beta-1})
  \|\phi\|_{\dot H^2}
  \|\psi\|_{\dot H^2}
  \lesssim
  t^{\beta-1}
  \|\phi\|_{\dot H^2}
  \|\psi\|_{\dot H^2}.
\end{equation*}
By the Riesz representation theorem, there exists 
$\dot L\colon\IT_0\to \LB(\dot H^{2},\dot H^{-2})$ satisfying
\begin{equation*}
  \langle
    \dot L(t)\phi,\psi
  \rangle
  =
  \tfrac{\dd}{\dd t}
    \langle
      L(t) \phi,\psi
    \rangle
\end{equation*}
for
$
  t\in\IT_0
$
and
$
  \phi,\psi\in \dot H^2.
$
Using now the specific test functions $S(t-s)\phi$ and $S(t-s)\psi$ in the variational formulation~\eqref{eq:Lyapunov_var} yields 
\begin{equation*}
  \langle
    (
      \dot L(s) + AL(s) + L(s)A
    )S(t-s)\phi
    ,S(t-s)\psi
  \rangle
  =
  \langle
    F(L(s))S(t-s)\phi
    ,S(t-s)\psi
  \rangle.
\end{equation*}
By the product rule,
\begin{align*}
  \frac{\mathrm d}{\mathrm d s}
  \langle
    S(t-s)L(s)S(t-s)\phi,\psi
  \rangle
&=
  \langle
    (
      \dot L(s) + AL(s) + L(s)A
    )S(t-s)\phi
    ,S(t-s)\psi
  \rangle\\
&=
  \langle
    F(L(s))S(t-s)\phi
    ,S(t-s)\psi
  \rangle
\end{align*}
and therefore, integration from $0$ to $t$ yields
\begin{align*}
  \langle
    L(t)\phi,\psi
  \rangle
&=
  \langle
    S(t)L(0)S(t)\phi,\psi
  \rangle
  +
  \int_0^t
    \langle
      S(t-s)F(L(s))S(t-s)\phi,\psi
    \rangle
  \diff s\\
&=
  \Big\langle
    S(t)L(0)S(t)\phi
    +
    \int_0^t
      S(t-s)F(L(s))S(t-s)\phi
    \diff s
    ,
    \psi
  \Big\rangle.
\end{align*}
Since this identity holds for all $\psi\in \dot H^2$, the mild form of the Lyapunov equation~\eqref{eq:Lyapunov_mild} is satisfied for all $\phi\in\dot H^2$. Due to the density of $\dot H^2 \subset H$, we can approximate any $\phi \in H$ by a sequence in~$\dot H^2$ and obtain convergence of the above identity, i.e., it can be extended to elements in~$H$.

It remains to prove that \eqref{eq:Lyapunov_var} is valid for $\phi,\psi\in\dot H^\varepsilon$, $\varepsilon>0$. For this we rely on the spatial regularity Theorem~\ref{thm:Lyapunov}\ref{eq:spat_reg}. Let $\varepsilon>0$ and $\phi,\psi\in \dot H^2$. Since $L$ satisfies \eqref{eq:Lyapunov_var}, 
\begin{align*}
  \Big|
    \frac{\dd}{\dd t}
    \langle
      L(t) \phi,\psi
    \rangle
  \Big|
&\leq
  |a(L(t)\phi,\psi)|
  +
  |a(L(t)\psi,\phi)|
  +
  |\langle
    R\phi,R\psi
  \rangle
  |
  +
  |\langle
    L(t)B\phi,B\psi
  \rangle_{\LB_2^0}
  |\\
&\leq
  \|A^{1-\frac\varepsilon2}L(t)\|_{\LB(H)}
  \big(
    \|A^{\frac\varepsilon2}\phi\|\|\psi\|
    +
    \|\phi\|\|A^\frac\varepsilon2\psi\|
  \big)\\
&\quad
  +
  \big(
    r^2
    +
    b^2
    \big\|
      A^{\frac{1-\beta}2}
      L(t)
      A^{\frac{1-\beta}2}
    \big\|_{\LB(H)}
  \big)
  \|\phi\|\|\psi\|.
\end{align*}
The regularity estimate Theorem~\ref{thm:Lyapunov}\ref{eq:spat_reg} allows us to bound further
\begin{equation*}
  \Big|
    \frac{\dd}{\dd t}
    \langle
      L(t) \phi,\psi
    \rangle
  \Big|
\lesssim
  t^{\frac\varepsilon2-1}
  \|\phi\|_{\dot H^\varepsilon}
  \|\psi\|_{\dot H^\varepsilon}
\end{equation*}
for all $\phi,\psi\in \dot H^\varepsilon$.
Therefore \eqref{eq:Lyapunov_var} can be extended to all $\phi,\psi\in\dot H^{\varepsilon}$, which concludes the proof.

\section{Rational approximation of semigroups} \label{sec:appendix}

For completeness we include here a result on the error resulting from a rational approximation of a semigroup. This is~\eqref{eq:E_h_bound} in the main part of the paper. To the best of our knowledge, it is not available in the literature, but the proof is similar to those of \cite[Theorems~7.1-7.2]{thomee2006}.  

\begin{prop}\label{prop:AE}
	Let $A: \cD(A) \subset H \to H$ be a densely defined, linear, self-adjoint and positive definite operator with a compact inverse on a separable Hilbert space $H$, let $S = (S(t), t \ge 0)$ be the analytic semigroup generated by $-A$, let the function $R: \R^+ \to \R$ be given by $R(\lambda) = (1 + \lambda)^{-1}$ and let $\tau > 0$. With $t_n = \tau n$ and $S_\tau = R(\tau A)$, there exists for all $r \in[0,1]$ a constant $C_r > 0$, not depending on $A$, such that for all $\rho\in[0,2]$
	\begin{equation*}
	\|A^{\frac{r}2} (S(t_n) - S_\tau^n) \|_{\LB(H)}
	\leq
	C_r t_n^{-\frac{\rho+r}2}\tau^{\rho/2}.
	\end{equation*}
\end{prop}

\begin{proof}
	First we introduce the notation $F_n(\lambda) = R(\lambda)^n - e^{-n \lambda}$ 
	so that $S(t_n) - S_\tau^n = F_n(\tau A)$. 
	The bound of the theorem can then be written as
	\begin{equation*}
	\|A^{\frac{r}2}F_n(\tau A)\|_{\LB(H)}
	\leq
	C_r t_n^{-\frac{\rho+r}2}\tau^{\rho/2} = C_r n^{-\frac{\rho + r}{2}} \tau^{-\frac{r}{2}}
	\end{equation*}
	and since $(\tau A)^{r/2}F_n(\tau A)$ diagonalizes with respect to the eigenbasis of $A$, the claim of the theorem is equivalent to the existence of a constant $C_r > 0$ such that 
	\begin{equation*}
	|\lambda^{\frac{r}{2}} F_n(\lambda)| \le C_r n^{-\frac{\rho + r}{2}}
	\end{equation*}
	for all $\lambda \in \sigma(\tau A) \subseteq [0,\infty) $, where $\sigma(\tau A)$ denotes the spectrum of $\tau A$. 
	
	We next consider the case $\lambda \in [0,1]$. Due to the definition of $R$, there exist constants $c_1 > 0$ and $0 < c_2 < 1$ such that, for $\lambda \in [0,1]$, $|R(\lambda)-e^{-\lambda}| \le c_1 \lambda^2$ and $|R(\lambda)| \le e^{-c_2 \lambda}$. Using these bounds we get for $\lambda \le 1$ 
	\begin{align*}
	|\lambda^{\frac{r}{2}} F_n(\lambda)| &= |\lambda^{\frac{r}{2}}\left( R(\lambda) - e^{- \lambda}\right) \sum^{n-1}_{j=0} R(\lambda)^{n-1-j} e^{-j\lambda}| \\
	&\le c_1 \lambda^{\frac{r}{2}+2} \sum^{n-1}_{j=0} e^{-c_2\lambda(n-1) + (c_2-1) j \lambda} \le c_1 e^{c_2} \lambda^{\frac{r}{2}+2} n e^{-c_2 \lambda n} \\
	&= c_1 e^{c_2} n^{-1 - \frac{r}{2}} \left( \lambda n \right)^{\frac{r}{2} + 2} e^{-c_2 \lambda n} \le C_r n^{-1 - \frac{r}{2}}.
	\end{align*}
	In the last step, we have used the fact that the mapping $x \mapsto x^{r/2 + 2} e^{-c_2 x}$ is bounded on $[0,\infty)$.
	
	Next, we assume that $\lambda \in (1,\infty)$ and note that then
	\begin{equation*}
	|\lambda^{\frac{r}{2}} F_n(\lambda)| \le \left(\lambda^{\frac{r}{2n}} R(\lambda) \right)^n + \left(\lambda^{\frac{r}{2n}}e^{-\lambda}\right)^n \le \left(\lambda^{\frac{r}{2}}R(\lambda) \right)^n + \left(\lambda^{\frac{r}{2}}e^{-\lambda} \right)^n.
	\end{equation*}
	By inspecting the derivatives of the mappings $\lambda \mapsto  \lambda^{r/2} R(\lambda)$ and $\lambda \mapsto  \lambda^{r/2}e^{-\lambda}$ we see that as long as $r \in [0,1]$, they map into $(0,1/2)$ and $(0,1/e)$ respectively on $(1,\infty)$. Pick $c_3 > 0$ such that $1/2 < e^{-c_3}$. Then
	\begin{equation*}
	\left(\lambda^{\frac{r}{2}}R(\lambda) \right)^n + \left(\lambda^{\frac{r}{2}}e^{-\lambda} \right)^n < 2 e^{-c_3 n} \le C_r n^{-2}
	\end{equation*}
	for $\lambda \in (1,\infty)$ and the proof is finished. 
\end{proof}

\end{document}